\setlist[enumerate, 1]{\sc(1)}
\def\z{\mathfrak{z}}
\def\u{\mathfrak{u}}
\def\b{\mathfrak{b}}
\def\g{\mathfrak{g}}
\def\e{\mathfrak{e}}
\def\so{\mathfrak{so}}
\def\X{\mathfrak{X}}
\def\A{\mathcal{A}}
\def\B{\mathcal{B}}
\def\G{\mathcal{G}}
\def\l{\ell}
\def\C{\mathbb{C}}
\def\R{\mathbb{R}}
\def\Q{\mathbb{Q}}
\def\Z{\mathbb{Z}}
\def\N{\mathbb{N}}
\newcommand{\ort}{{\sf O}}
\newcommand{\GL}{{\sf GL}}
\def\ad{\operatorname{ad}}
\def\tr{\operatorname{tr}}
\def\Id{\operatorname{Id}}
\def\Lie{\operatorname{Lie}}
\newcommand{\Aut}{\operatorname{Aut}}
\newcommand{\Ker}{\operatorname{Ker}}
\DeclareMathOperator{\Iso}{Iso}
\DeclareMathOperator{\im}{Im}
\DeclareMathOperator{\ord}{ord}
\DeclareMathOperator{\hol}{Hol}
\DeclareMathOperator{\lcm}{lcm}
\DeclareMathOperator{\lie}{Lie}
\def\alt{\raise1pt\hbox{$\bigwedge$}}
\def\la{\langle}
\def\ra{\rangle}
\theoremstyle{plain}
\newtheorem{teo}{\bf Theorem}[section]
\newtheorem{coro}[teo]{\bf Corollary}
\newtheorem{prop}[teo]{\bf Proposition}
\newtheorem{lema}[teo]{\bf Lemma}
\theoremstyle{definition}
\newtheorem{defi}[teo]{\bf Definition}
\theoremstyle{remark}
\newenvironment{obs}
{\pushQED{\qed}\obsx}
\newcommand{\ri}{{\rm (i)}}
\newcommand{\rii}{{\rm (ii)}}
\newcommand{\riii}{{\rm (iii)}}
\newcommand{\norm}[1]{\left\lVert#1\right\rVert}
\newcommand{\matriz}[1]{\ensuremath{\begin{pmatrix}#1\end{pmatrix}}}
\newcommand{\comillas}[1]{``#1''}
\title{Holonomy groups of compact flat solvmanifolds}
\author{A. Tolcachier}
\email{atolcachier@famaf.unc.edu.ar}
\date{}
\address{FaMAF, Universidad Nacional de C\'ordoba, Ciudad Universitaria, X5000HUA C\'ordoba, Argentina}
\subjclass[2010]{20H15, 22E25, 22E40, 53C29}
\keywords{}
\thanks{}
\dedicatory{}
\begin{document}

\renewcommand{\bibname}{References}

\begin{abstract}
In this article we study the holonomy group of flat solvmanifolds. It is known that the holonomy group of a flat solvmanifold is abelian; we give an elementary proof of this fact and moreover we prove that any finite abelian group is the holonomy group of a flat solvmanifold. Furthermore, we show that the minimal dimension of a flat solvmanifold with holonomy group $\Z_n$ coincides with the minimal dimension of a compact flat manifold with holonomy group $\Z_n$. Finally, we give the possible holonomy groups of flat solvmanifolds in dimensions 3, 4, 5 and 6; exhibiting in the latter case a general construction to show examples of non cyclic holonomy groups.
\end{abstract}

\maketitle

\tableofcontents

\section{Introduction}
A solvmanifold is defined as a compact homogeneous space $G\backslash\varGamma$ of a simply connected solvable Lie group $G$ by a discrete subgroup $\varGamma$. Solvmanifolds generalize the well known nilmanifolds which are defined similarly when $G$ is nilpotent. Both nilmanifolds and solvmanifolds have provided a large number of examples and counterexamples in differential geometry . For instance, the first example of a symplectic manifold without Kähler structure, the so-called \comillas{Kodaira-Thurston manifold}, is a four dimensional nilmanifold  \cite{Th}. However, many important global properties of nilmanifolds cannot be generalized to solvmanifolds, and for this reason these manifolds are currently widely studied. One of the most important features of nilmanifolds is a result proved by Nomizu, which says briefly that the cohomology of the nilmanifolds can be computed using invariant differential forms, since it is isomorphic to the cohomology of its Lie algebra \cite{No54}. Unfortunately this is not true in general for solvmanifolds, but only in particular cases. For example, when $G$ is completely solvable \cite{Hat} or when the \comillas{Mostow condition} holds \cite{Mo61}. 

Solvmanifolds have had several applications in complex geometry. For instance let us mention the well known Oeljeklaus-Toma manifolds. In \cite{OT}, for any integers $s>0$ and $t>0$, Oeljeklaus and Toma constructed compact complex manifolds by using number theory and they showed that for $t=1$ these manifolds admit locally conformally Kähler metrics; they provide a counterexample to a conjecture made by Vaisman. Kasuya proved in \cite{K} that OT-manifolds can be represented as solvmanifolds and using this description he proved that they do not admit any Vaisman metric. 

Solvmanifolds have also had applications in theoretical physics, for instance in \cite{FMOU}, solutions of the supersymmetric equations of type II A are exhibited on six-dimensional solvmanifolds admitting an invariant symplectic half-flat structure.
In \cite{OUV} the authors provide an example of a six-dimensional complex solvmanifold which admits many invariant solutions to the Strominger system with respect to certain family of Hermitian connections in the Green-Schwarz anomaly cancelation condition. Moreover, some of these solutions satisfy in addition the heterotic equations of motion. 

We are interested in flat solvmanifolds, i.e, solvmanifolds admitting a flat Riemannian metric induced by a flat left invariant metric on the corresponding Lie group. Milnor \cite{Mi} gave a characterization of Lie groups which admit a flat left invariant metric and he showed that they are all solvable of a very restricted form, proving that its Lie algebra decomposes orthogonally as an abelian subalgebra and an abelian ideal, where the action of the subalgebra on the ideal is by skew-adjoint endomorphisms. Moreover, such a Lie group $G$ is of type (I), that is the eigenvalues of $\ad_x$ are pure imaginary for all $x\in\Lie(G)$, and in particular $G$ is unimodular. Some simply connected Lie groups of this class admit lattices (i.e. cocompact discrete subgroups), therefore the corresponding solvmanifolds admit a flat Riemannian metric induced by the flat left invariant metric on the Lie group.

Flat solvmanifolds are a special class of compact flat manifolds. A classical theorem asserts that a complete Riemannian manifold is flat if and only if its universal covering is isometric to Euclidean space. In particular, the manifold is isometric to a quotient of the form $\R^n/\Gamma$ for some discrete subgroup $\Gamma$ of $\Iso(\R^n)$, and its fundamental group is isomorphic to $\Gamma$. In the compact case, such subgroups are well described by the well known Bieberbach's theorems (\cite{Bi1,Bi2,Bi3}) and consequently they are called \comillas{Bieberbach groups}. Particularly, a Bieberbach group $\Gamma$ admits a unique maximal normal abelian subgroup $\Lambda$ of finite index. Furthermore, the finite group $\Gamma/\Lambda$ can be identified with the Riemannian holonomy group of the compact flat manifold. A famous result of Auslander and Kuranishi \cite[Theorem 3]{AusK} establishes that every finite group is the holonomy group of a compact flat manifold. 

The fundamental group of a solvmanifold $\varGamma\backslash G$ is isomorphic to the lattice $\varGamma$ and when the solvmanifold is flat it is isomorphic to a Bieberbach group. In \cite{Aus}, fundamental groups of flat solvmanifolds were characterized.

The purpose of this article is to study the holonomy group of flat solvmanifolds. It was proved in \cite{Aus} that these finite holonomy groups are abelian. In \S3, by using Milnor's result mentioned above, we provide an elementary proof of this fact (Theorem \ref{holabeliana}). The main result in this section is the proof of the converse. In order to do this, we focus on a special class of flat solvmanifolds which are obtained as quotients of almost abelian Lie groups, since for these Lie groups there exists a criterion to determine the existence of lattices (see \cite{B}). We show that the holonomy group of a flat almost abelian solvmanifold is cyclic (Theorem \ref{holonomia_lcm}), and furthermore for $n=p^k$ and $n=2p^k$, with $p$ a prime and $k\in\N$, we give an explicit construction of an almost abelian Lie group and a lattice such that the quotient solvmanifold has holonomy group $\Z_n$ (Theorem \ref{holpotprimos}). Combining  these results we prove the converse, namely, that every finite abelian group is the holonomy group of a flat solvmanifold (Theorem \ref{holabelianofinito}). 

Due to the result of Auslander and Kuranishi, it is interesting to know, given a finite group $G$, the minimal dimension of a compact flat manifold with holonomy group $G$. The minimal dimension was determined for several groups, for example for $\Z_p$ with $p$ prime in \cite{Ch}, and more generally for cyclic groups in \cite{Hi}. Concerning this, we prove in \S4 that the minimal dimension of a flat solvmanifold with holonomy group $\Z_n$ coincides with the minimal dimension of a compact flat manifold with holonomy group $\Z_n$, for $n\geq 3$ (Theorem \ref{mindim}).

The last section of the article (\S5) is devoted to studying holonomy groups of low-dimensional flat solvmanifolds. By using a refined version of Milnor's result proved by Barberis, Dotti and Fino in \cite{BDF}, we give the possible holonomy groups of flat solvmanifolds in dimensions 3, 4, 5 and 6. Up to dimension 5 all flat solvmanifolds are quotients of almost abelian Lie groups, therefore their holonomy groups are cyclic.  On the other hand, in dimension 6 we exhibit examples with non-cyclic abelian holonomy groups as particular cases of a general construction.

The results of this article are from part of my bachelor thesis under the direction of Adrian Andrada. I would like to thank Roberto Miatello and Juan Pablo Rossetti for their useful suggestions and comments.

\medskip

\section{Preliminaries}
In this short section we outline the basic theory  of the topics studied in this article, namely compact flat manifolds and solvmanifolds equipped with invariant flat metrics.

\subsection{Bieberbach groups and compact flat manifolds}
It is well known that the group of isometries of $\R^n$ is $\Iso(\R^n)\cong\ort(n)\ltimes_\varphi\R^n$, where $\varphi(A)(v)=Av$. The pair $(A,v)$ is identified with the isometry $f$ of $\R^n$ given by $f(x)=Ax+v$. This group is called the \textit{group of rigid transformations} of $\R^n$. Equipped with the product topology of $\ort(n)$ and $\R^n$, $\Iso(\R^n)$ is a topological group. The \textit{affine group} $\A_n$ is defined by $\A_n=\GL(n,\R)\ltimes_\varphi\R^n$. An element $(A,v)\in\Iso(\R^n)$ is called a \textit{pure translation} if $A=\Id$. If $\Gamma$ is a subgroup of $\Iso(\R^n)$, the subgroup $\Gamma\cap\R^n$ of pure translations of $\Gamma$ will be usually denoted by $\Lambda$, and it is a torsionfree abelian normal subgroup of $\Gamma$.

\begin{defi}
	A subgroup $\Gamma$ of $\Iso(\R^n)$ is called \textit{co-compact} if $\R^n/\Gamma$ is compact. A co-compact and discrete subgroup $\Gamma$ is called a \textit{crystallographic} group. If $\Gamma$ is also torsionfree then $\Gamma$ is called a \textit{Bieberbach} group.
	 \end{defi}

For a discrete subgroup $\Gamma\subset\Iso(\R^n)$, an equivalent condition to be torsionfree is that the action of $\Gamma$ on $\R^n$ be free. A nice class of group actions are the properly discontinuous actions. In general, we say that an action of a subgroup $G$ of the group of homeomorphisms of a topological space $X$ is \textit{properly discontinuous} if	 for all $x\in X$ there exists an open neighborhood $U$ of $x$ such that $gU\cap U=\emptyset$ for all $g\neq e$. It is known that if a discrete subgroup $\Gamma$ of $\Iso(\R^n)$ acts freely on $\R^n$, then the action is properly discontinuous and $\R^n/\Gamma$ admits a structure of a differentiable manifold. As a corollary, we have the next important theorem about Bieberbach groups.

\begin{teo}\label{biebvartop}
	Let $\Gamma$ be a Bieberbach subgroup of $\Iso(\R^n)$ and $\pi:\R^n\to\R^n/\Gamma$ the canonical projection. Then $\Gamma$ acts on $\R^n$ in a properly discontinuous manner, hence $\pi$ is a covering map and $\pi_1(\R^n/\Gamma)\cong \Gamma$. Furthermore $\R^n/\Gamma$ admits a structure of a $n$-dimensional differentiable manifold such that $\pi$ is a local diffeomorphism. 
\end{teo}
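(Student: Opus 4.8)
The plan is to combine the two facts recalled just before the statement: first, that for a discrete subgroup of $\Iso(\R^n)$ being torsionfree is equivalent to acting freely on $\R^n$; and second, that a discrete subgroup acting freely acts properly discontinuously and yields a differentiable quotient. Since a Bieberbach group $\Gamma$ is by definition discrete and torsionfree, the first fact immediately gives that the action of $\Gamma$ on $\R^n$ is free, and then the second yields proper discontinuity. This disposes of the first assertion and reduces everything else to extracting the covering and the manifold structure from a free, properly discontinuous action of a group acting by isometries.

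Next I would show that $\pi$ is a covering map by exhibiting evenly covered neighborhoods. Fix $x\in\R^n$ and, using proper discontinuity, choose an open neighborhood $U$ of $x$ with $gU\cap U=\emptyset$ for every $g\neq e$ in $\Gamma$. From this one checks that the translates $\{gU\}_{g\in\Gamma}$ are pairwise disjoint, so that $\pi^{-1}(\pi(U))=\bigsqcup_{g\in\Gamma}gU$; since $\pi$ is open (the quotient map by a group action is always open) and $\pi|_U$ is injective, the restriction $\pi|_U\colon U\to\pi(U)$ is a homeomorphism and each sheet $gU$ maps homeomorphically onto $\pi(U)$. Hence $\pi(U)$ is an evenly covered open neighborhood of $\pi(x)$, and $\pi$ is a covering map. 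Along the way one must verify that the quotient is Hausdorff, which again follows from proper discontinuity together with the fact that $\Gamma$ acts by isometries.

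To upgrade this to a smooth structure I would take as charts the local inverses $(\pi|_U)^{-1}$ of the sheets constructed above. On overlaps, two such charts differ by a map given by an element $g\in\Gamma$ acting on $\R^n$; since every element of $\Iso(\R^n)$ is an affine isometry, hence a diffeomorphism of $\R^n$, the transition functions are smooth, so $\R^n/\Gamma$ is a smooth $n$-manifold and $\pi$ is a local diffeomorphism. For the fundamental group I would use that $\R^n$ is simply connected, so $\pi$ is the universal covering; a quotient by a free, properly discontinuous action is a regular covering whose group of deck transformations is exactly $\Gamma$, and the deck group of the universal covering is isomorphic to the fundamental group of the base. Therefore $\pi_1(\R^n/\Gamma)\cong\Gamma$.

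Since the two genuinely analytic inputs (torsionfree $\Leftrightarrow$ free, and free $\Rightarrow$ properly discontinuous) are quoted as known, I expect no essential difficulty. The main obstacle is really bookkeeping: carefully setting up the evenly covered neighborhoods, checking Hausdorffness of the quotient topology, and invoking the universal-covering/deck-transformation correspondence precisely enough to read off the isomorphism $\pi_1(\R^n/\Gamma)\cong\Gamma$.
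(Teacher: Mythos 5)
Your proposal is correct and follows exactly the route the paper intends: it states this theorem as an immediate corollary of the two facts recalled just beforehand (torsionfree $\Leftrightarrow$ free for discrete subgroups of $\Iso(\R^n)$, and free $\Rightarrow$ properly discontinuous with a differentiable quotient), offering no further proof. You simply fill in the standard covering-space and chart-construction details that the paper leaves implicit, so there is nothing to object to.
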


Bieberbach groups, and more generally crystallographic groups, are well described by three classic theorems known as \comillas{Bieberbach's theorems}. A proof of them can be found in \cite{Ch}.

\begin{teo}[Bieberbach's First Theorem]\label{maximal}
	Let $\Gamma$ be a crystallographic subgroup of $\Iso(\R^n)$ and $\Lambda$ the subgroup of pure translations of $\Gamma$. Then $\Lambda$ is a normal free abelian subgroup of rank $n$ such that $\Gamma/\Lambda$ is a finite group. Furthermore, $\Lambda$ is maximal as a normal abelian subgroup, in the sense that it contains every other normal abelian subgroup of $\Gamma$.
	\end{teo}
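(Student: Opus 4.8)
The plan is to study the \emph{rotation homomorphism} $r\colon \Iso(\R^n)=\ort(n)\ltimes\R^n \to \ort(n)$, $(A,v)\mapsto A$, whose kernel meets $\Gamma$ exactly in the pure translations, so that $\ker\!\big(r|_\Gamma\big)=\Lambda$ and $\Gamma/\Lambda\cong P:=r(\Gamma)\leq\ort(n)$, the \emph{point group}. I would dispose of the easy parts first. Normality of $\Lambda$ is the one-line computation $(A,v)(\Id,w)(A,v)^{-1}=(\Id,Aw)$, which moreover shows that $P$ permutes the translation vectors occurring in $\Lambda$. Since $\Lambda\subseteq\R^n$ it is abelian; it is torsionfree because a nonzero translation has infinite order; and it is discrete because $\Gamma$ is. Hence $\Lambda$ is a discrete subgroup of $\R^n$, i.e. free abelian of some rank $k\leq n$. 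It then remains to prove three things: (a) $P$ is finite; (b) $k=n$; and (c) every normal abelian subgroup of $\Gamma$ sits inside $\Lambda$.

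The decisive and hardest step is (a). The engine is a commutator estimate in $\ort(n)$: writing $[A,B]=ABA^{-1}B^{-1}$ and using the identity $AB-BA=(A-\Id)(B-\Id)-(B-\Id)(A-\Id)$ together with orthogonality, one gets $\norm{[A,B]-\Id}=\norm{AB-BA}\leq 2\,\norm{A-\Id}\,\norm{B-\Id}$ in the operator norm. Thus commutators of rotations close to $\Id$ are \emph{quadratically} closer to $\Id$, so any subgroup of $P$ generated by elements within $\varepsilon<\tfrac12$ of $\Id$ has iterated commutators shrinking geometrically to $\Id$. To turn this local contraction into global finiteness I would bring in cocompactness: because $\R^n/\Gamma$ is compact, some orbit $\Gamma\cdot x_0$ is $R$-dense, and this lets one bound the translation parts of the relevant commutators (after conjugation), so that a hypothetical sequence $r(g_j)\to\Id$ with $r(g_j)\neq\Id$ would produce infinitely many distinct elements of $\Gamma$ accumulating at a single isometry, contradicting discreteness. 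Consequently $P$ has no nontrivial element in a fixed neighborhood of $\Id$, so $P$ is discrete in the compact group $\ort(n)$ and therefore finite. I expect this passage --- upgrading the purely local statement ``rotations near $\Id$ almost commute'' to the global finiteness of $P$ --- to be the main obstacle, and it is exactly here that cocompactness (not merely discreteness) is indispensable, since infinite point groups do occur for discrete groups that are not cocompact.

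Granting (a), step (b) is immediate: $[\Gamma:\Lambda]=|P|<\infty$, so $\Lambda$ inherits cocompactness from $\Gamma$, and a discrete cocompact subgroup of $\R^n$ has rank $n$; thus $\Lambda$ is free abelian of rank $n$ and spans $\R^n$. Finally, for maximality (c) I would take a normal abelian subgroup $N\trianglelefteq\Gamma$ together with an arbitrary $(A,a)\in N$, and exploit that both $N$ and $\Lambda$ are normal: for each $v\in\Lambda$ the commutator $(\Id,v)(A,a)(\Id,v)^{-1}(A,a)^{-1}=(\Id,(\Id-A)v)$ lies in $N\cap\Lambda$, and since $N$ is abelian it commutes with $(A,a)$, which forces $(A-\Id)(\Id-A)v=0$, i.e. $(A-\Id)^2v=0$, for every $v\in\Lambda$. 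As $\Lambda$ spans $\R^n$ by (b), this yields $(A-\Id)^2=0$; but $A$ is orthogonal, hence semisimple, so its minimal polynomial is squarefree and $A=\Id$, whence $(A,a)\in\Lambda$. This proves $N\subseteq\Lambda$, and since $\Lambda$ is itself a normal abelian subgroup, it is the maximal one, completing the argument.
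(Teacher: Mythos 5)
The paper itself does not prove this theorem (it refers to Charlap's book), so there is no internal proof to compare against; I can only assess your argument on its own terms. Your architecture --- reduce to (a) finiteness of the point group $P=r(\Gamma)$, (b) $\Lambda$ has rank $n$, (c) maximality --- is the right one, and (b) and (c) are complete and correct: the computation $(\Id,v)(A,a)(\Id,v)^{-1}(A,a)^{-1}=(\Id,(\Id-A)v)$, the conclusion $(A-\Id)^2=0$ on the span of $\Lambda$, and the appeal to semisimplicity of orthogonal matrices form the standard proof of maximality, \emph{granted} (a) and (b). The commutator estimate $\norm{[A,B]-\Id}\leq 2\norm{A-\Id}\,\norm{B-\Id}$ is also correct and is indeed the engine of every known proof.

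The genuine gap is in (a): you have correctly located the hard step but not filled it. The assertion that a sequence $r(g_j)\to\Id$ with $r(g_j)\neq\Id$ ``would produce infinitely many distinct elements of $\Gamma$ accumulating at a single isometry'' is exactly what must be proved, and the mechanism you offer for bounding translation parts does not work as described. The elements of $\Gamma$ with a fixed rotation part $A$ have translation parts forming a single coset $a+\Lambda$, so finding a representative of $A$ with bounded translation part requires $\Lambda$ to be $R$-dense, i.e.\ of full rank --- which in your scheme is \emph{deduced from} (a); conjugating $g_j$ by $(C,c)\in\Gamma$ leaves $\norm{a_j}$ essentially unchanged; and the translation part of $[g,h]$ is bounded by $\norm{B-\Id}\norm{a}+\norm{A-\Id}\norm{b}$, which is uncontrolled when $\norm{a_j}$ grows faster than $\norm{r(g_j)-\Id}$ shrinks. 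So the passage from the local commutator contraction to discreteness of $P$ is a statement of intent, and the natural reading of it is circular. The known repairs require a further idea: either first produce $n$ linearly independent translations in $\Gamma$ (pigeonhole in the compact group $\ort(n)$ yields, for each direction $u$ and each $\delta>0$, elements with rotation part $\delta$-close to $\Id$ and long translation part roughly along $u$; your commutator contraction plus discreteness then kills the rotation part), after which finiteness of $P$ is immediate because $P$ acts faithfully by isometries on the rank-$n$ lattice $\Lambda$ and so permutes the finitely many lattice vectors of bounded length; or prove directly that $(A,a)\in\Gamma$ with $\norm{A-\Id}<\tfrac12$ forces $A=\Id$, which uses the invariant splitting $\R^n=\ker(A-\Id)\oplus\operatorname{Im}(A-\Id)$ and an analysis of the powers $g^k$, not only commutators. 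Until one of these is carried out, step (a) --- and with it the whole theorem --- remains unproved.
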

	
	\medskip
	
	The group $\Gamma/\Lambda$ is known as the \textit{point group} of $\Gamma$ or the \textit{holonomy group} of $\Gamma$, in the case that $\Gamma$ is a Bieberbach subgroup of $\Iso(\R^n)$. The point group of $\Gamma$ can be identified with $r(\Gamma)\subset\ort(n)$, which is a finite group, where $r$ is the projection homomorphism given by $r(A,v)=A.$ Indeed, the kernel of $r:\Gamma\to\ort(n)$ is $\Lambda$ and thus $\Gamma/\Lambda\cong r(\Gamma)$.
	
	\medskip
	
		\begin{teo}[Bieberbach's Second Theorem]
			Let $\Gamma_1$ and $\Gamma_2$ be crystallographic subgroups of $\Iso(\R^n)$ and let $f:\Gamma_1\to\Gamma_2$ a group isomorphism. Then there exists $\alpha\in\A_n$ such that $f(\beta)=\alpha\beta\alpha^{-1}$ for all $\beta\in\Gamma_1$. In other words, every isomorphism of crystallographic groups can be realized by an affine change of coordinates.
			\end{teo}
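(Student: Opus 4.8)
The plan is to exploit Bieberbach's First Theorem (Theorem \ref{maximal}) to pin down how $f$ must act on the translation lattices, and then to reconstruct $\alpha$ from its linear and translational parts separately. First I would observe that $\Lambda_i=\Gamma_i\cap\R^n$ is the \emph{unique} maximal normal abelian subgroup of $\Gamma_i$ (by Theorem \ref{maximal} it contains every normal abelian subgroup), and that this is a purely group-theoretic characterization. Since any isomorphism carries normal abelian subgroups to normal abelian subgroups preserving inclusions, $f$ must restrict to an isomorphism $f|_{\Lambda_1}:\Lambda_1\to\Lambda_2$. Identifying $\Lambda_i$ with the full-rank lattices $L_i\subset\R^n$ of its translation vectors, $f|_{\Lambda_1}$ is a $\Z$-linear isomorphism $L_1\to L_2$; because $L_1$ spans $\R^n$ it extends uniquely to an $\R$-linear automorphism $C\in\GL(n,\R)$, characterized by $f(\Id,v)=(\Id,Cv)$ for $v\in L_1$. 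This $C$ will be the linear part of the sought $\alpha$.

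Next I would determine the linear parts of $f$ on all of $\Gamma_1$. Writing $\beta=(A,a)$ and using the elementary identity $\beta(\Id,v)\beta^{-1}=(\Id,Av)$, I apply $f$ to this relation: since $(\Id,Av)\in\Lambda_1$, the left side maps to $(\Id,CAv)$, while the right side, with $f(\beta)=(A',\tau(\beta))$, maps to $(\Id,A'Cv)$. Comparing gives $A'Cv=CAv$ for all $v\in L_1$, hence for all $v\in\R^n$, so that $A'=CAC^{-1}$. Thus $f(\beta)=(CAC^{-1},\tau(\beta))$ for some translation vector $\tau(\beta)$, and the entire rotational behaviour of $f$ is already forced.

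It then remains to find a single $b\in\R^n$ so that $\alpha=(C,b)$ conjugates correctly; a direct computation of $\alpha\beta\alpha^{-1}$ shows this amounts to solving $(\Id-CAC^{-1})b=\tau(\beta)-Ca$ simultaneously for all $\beta$. To do this I would introduce $t(\beta):=\tau(\beta)-Ca$ and check, using the homomorphism property of $f$ together with the matching of linear parts, that $t$ is a $1$-cocycle, $t(\beta_1\beta_2)=t(\beta_1)+CA_1C^{-1}\,t(\beta_2)$, for the action of $\Gamma_1$ on $\R^n$ through $\beta\mapsto CA_\beta C^{-1}$. Since $t$ vanishes on $\Lambda_1$ by the definition of $C$, it descends to the finite quotient $F=\Gamma_1/\Lambda_1$. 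I would then invoke the vanishing of $H^1(F,\R^n)$ for a finite group acting on the divisible group $\R^n$, realized concretely by the averaging $b=\tfrac{1}{|F|}\sum_{\bar\beta\in F}t(\beta)$, which yields $t(\beta)=b-CA_\beta C^{-1}b$ and hence the desired relation. Unwinding, $\alpha=(C,b)\in\A_n$ satisfies $f(\beta)=\alpha\beta\alpha^{-1}$ for all $\beta\in\Gamma_1$.

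The principal obstacle is exactly the reconstruction of the translation part $b$: the linear part $C$ is forced immediately by rigidity of the lattice, but matching the translations requires recognizing the obstruction as a cohomology class and killing it by averaging over the finite group $F$, which is where finiteness of $\Gamma_i/\Lambda_i$ (Theorem \ref{maximal}) and divisibility of $\R^n$ genuinely enter. I would note finally that $\alpha$ need not be an isometry, since $C$ is only required to be invertible and not orthogonal; this is precisely why the conclusion lands in $\A_n$ rather than in $\Iso(\R^n)$.
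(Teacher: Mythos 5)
The paper does not prove this statement itself; it defers to Charlap's book \cite{Ch}, and your argument is precisely the classical proof given there: use the maximality clause of Bieberbach's First Theorem to force $f(\Lambda_1)=\Lambda_2$, extend the lattice isomorphism to $C\in\GL(n,\R)$, deduce $A'=CAC^{-1}$ from conjugation of translations, and kill the translational obstruction by averaging the cocycle $t$ over the finite point group (vanishing of $H^1(F,\R^n)$). All the computations you indicate check out (in the convention $(A,a)(B,b)=(AB,Ab+a)$ one gets $\alpha\beta\alpha^{-1}=(CAC^{-1},\,b-CAC^{-1}b+Ca)$, matching your equation for $b$), so the proposal is correct and takes the same route as the cited source.
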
 
			
			\begin{teo}[Bieberbach's Third Theorem]  
			For each $n\in\N$, there are only finitely many isomorphism classes of crystallographic subgroups of $\Iso(\R^n)$.
				\end{teo}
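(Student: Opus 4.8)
The plan is to count crystallographic groups not up to isomorphism directly but up to conjugacy in the affine group $\A_n$, and then to reduce the finiteness to two independent statements: one about finite subgroups of $\GL(n,\Z)$ and one about group cohomology. By Bieberbach's Second Theorem, two crystallographic subgroups of $\Iso(\R^n)$ are isomorphic if and only if they are conjugate by an element of $\A_n$, so it suffices to bound the number of $\A_n$-conjugacy classes. First I would normalize: given $\Gamma$, Bieberbach's First Theorem supplies the translation subgroup $\Lambda$ as a free abelian group of rank $n$, i.e.\ a full lattice in $\R^n$. Since $\GL(n,\R)$ acts transitively on full lattices, I would conjugate $\Gamma$ by a suitable element of $\A_n$ so that $\Lambda=\Z^n$; this leaves the $\A_n$-conjugacy class unchanged.

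Next I would extract the integral holonomy data. After this normalization, conjugation of $\Gamma$ on its normal subgroup $\Lambda=\Z^n$ is carried out through the linear parts of the elements of $\Gamma$, hence descends to the finite point group $F=\Gamma/\Lambda$ and yields a representation $\theta\colon F\to\GL(n,\Z)$. This representation is faithful: if a coset acted trivially on $\Lambda$, the common linear part of its elements would fix the full lattice $\Lambda$ pointwise and hence equal the identity, so those elements would be pure translations already lying in $\Lambda$. Thus $F$ is realized as a finite subgroup of $\GL(n,\Z)$. The key input at this stage is that $\GL(n,\Z)$ contains only finitely many conjugacy classes of finite subgroups: this follows from Minkowski's bound on the orders of such subgroups together with the Jordan--Zassenhaus theorem, which guarantees finitely many integral representations of a fixed finite group in bounded degree. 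Consequently there are only finitely many possibilities for the pair $(F,\theta)$ up to conjugacy.

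Finally I would classify the extensions carrying a fixed holonomy datum. For each such $(F,\theta)$, any crystallographic group with translation lattice $\Z^n$, point group $F$ and action $\theta$ fits into a short exact sequence $1\to\Z^n\to\Gamma\to F\to 1$ realizing on $\Z^n$ the $F$-module structure prescribed by $\theta$. Such extensions are classified up to equivalence by the cohomology group $H^2(F,\Z^n)$, and this group is finite: it is annihilated by $|F|$ and, being a subquotient of a finitely generated group of cochains, is itself finitely generated. As equivalent extensions have isomorphic middle groups, each cohomology class contributes at most one isomorphism type of $\Gamma$. Hence the number of isomorphism classes is at most the (finite) number of pairs $(F,\theta)$ times the (finite) order of the corresponding $H^2(F,\Z^n)$, which proves the theorem.

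The step I expect to be the main obstacle is the finiteness of the number of conjugacy classes of finite subgroups of $\GL(n,\Z)$, the only genuinely deep ingredient, which rests on integral representation theory; by contrast the reduction through Bieberbach's first two theorems and the finiteness of $H^2(F,\Z^n)$ are essentially formal. A secondary point needing care is that the normalization $\Lambda=\Z^n$ need not preserve the Euclidean metric, but this is harmless because the only equivalence we use is conjugacy in $\A_n$, which is exactly the relation provided by Bieberbach's Second Theorem.
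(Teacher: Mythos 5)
The paper does not prove this statement at all: it quotes Bieberbach's Third Theorem as a classical result and refers the reader to Charlap's book \cite{Ch} for the proof. Your argument is correct and is essentially the standard proof found there: reduce isomorphism to $\A_n$-conjugacy via the Second Theorem, normalize $\Lambda=\Z^n$, observe that the point group embeds faithfully as a finite subgroup of $\GL(n,\Z)$ (of which there are finitely many conjugacy classes, by Minkowski plus Jordan--Zassenhaus), and bound the remaining choices by the finite group $H^2(F,\Z^n)$. You also correctly isolate the one genuinely deep input (finiteness of conjugacy classes of finite subgroups of $\GL(n,\Z)$) and the one point of care (the normalization lives in $\A_n$, not in $\Iso(\R^n)$), so there is nothing to object to.
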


	
	

\medskip

Bieberbach's theorems are fundamental tools to study compact flat manifolds. Indeed, a classical theorem asserts that a Riemannian manifold $M$ is flat if and only if $M$ is isometric to $\R^n/\Gamma$ where $\Gamma$ is a subgroup of $\Iso(\R^n)$ that acts on $\R^n$ in a properly discontinuous manner, that is to say, $\Gamma$ is a Bieberbach subgroup of $\Iso(\R^n)$. Furthermore, the holonomy group of the Riemannian manifold $\R^n/\Gamma$ can be identified with what is called the holonomy group of the Bieberbach group, that is $\hol(\R^n/\Gamma)\cong r(\Gamma)$, see for instance \cite[Example 3.1]{Ch}.

We can reinterpret Bieberbach's theorems in the context of compact flat manifolds.

\begin{teo}[Bieberbach First]
	Let $M$ a compact flat manifold. Then $M$ admits a Riemannian covering by a flat torus and the covering is a local isometry. Furthermore, the holonomy group $\hol(M)$ is finite. 
\end{teo}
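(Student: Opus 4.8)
The plan is to realize $M$ as a Euclidean space form and then exploit the lattice of pure translations furnished by Bieberbach's First Theorem. First I would invoke the classical characterization recalled above to identify $M$ isometrically with a quotient $\R^n/\Gamma$, where $\Gamma$ is a Bieberbach subgroup of $\Iso(\R^n)$. By Theorem \ref{biebvartop} the canonical projection $\pi_\Gamma\colon\R^n\to\R^n/\Gamma$ is then a covering map and a local isometry (with respect to the flat metric $\R^n/\Gamma$ inherits from $\R^n$), and $\pi_1(M)\cong\Gamma$.

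Next I would apply Bieberbach's First Theorem (Theorem \ref{maximal}) to the subgroup of pure translations $\Lambda=\Gamma\cap\R^n$, which is a normal free abelian subgroup of rank $n$ with $\Gamma/\Lambda$ finite. Being a discrete subgroup of $\R^n$ of rank $n$, $\Lambda$ is a full lattice, so $T:=\R^n/\Lambda$ is a flat torus and the projection $\pi_\Lambda\colon\R^n\to T$ is likewise a local isometry.

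The key step is to produce the covering $T\to M$. Since $\Lambda\trianglelefteq\Gamma$, the inclusion induces a well-defined surjection $p\colon\R^n/\Lambda\to\R^n/\Gamma$ with $p\circ\pi_\Lambda=\pi_\Gamma$, and the finite group $\Gamma/\Lambda$ acts on $T$ by $\gamma\Lambda\cdot(x+\Lambda)=\gamma x+\Lambda$ with orbit space $M$. I would verify that this action is free: if $\gamma\Lambda$ fixes $x+\Lambda$, then $\gamma x=\lambda x$ for some $\lambda\in\Lambda$, so $\lambda^{-1}\gamma$ fixes $x\in\R^n$; as $\Gamma$ is torsionfree its action on $\R^n$ is free, forcing $\gamma\in\Lambda$. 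A finite group acting freely on the manifold $T$ yields a covering $p$ of $[\Gamma:\Lambda]$ sheets, and $p$ is a local isometry because $\Gamma/\Lambda$ acts by isometries (inherited from the $\Gamma$-action on $\R^n$) and locally $p=\pi_\Gamma\circ\pi_\Lambda^{-1}$. This exhibits the desired Riemannian covering of $M$ by the flat torus $T$.

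Finally, for the holonomy group I would use the identification $\hol(M)\cong r(\Gamma)$ recalled above, where $r\colon\Iso(\R^n)\to\ort(n)$ is the linear-part projection. Since $\Ker\bigl(r|_\Gamma\bigr)=\Lambda$, we obtain $r(\Gamma)\cong\Gamma/\Lambda$, which is finite by Theorem \ref{maximal}; hence $\hol(M)$ is finite. The only point demanding care is confirming that $p$ is genuinely a covering map rather than a mere continuous surjection, and this is exactly what the freeness of the induced $\Gamma/\Lambda$-action on $T$ secures; everything else is an assembly of Theorems \ref{biebvartop} and \ref{maximal} together with the standard fact that quotients by finite free actions are coverings.
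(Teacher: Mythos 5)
Your proof is correct and follows exactly the route the paper intends: it states this manifold version without proof as a direct reinterpretation of Theorem \ref{maximal} via the identification $M\cong\R^n/\Gamma$ and $\hol(M)\cong r(\Gamma)\cong\Gamma/\Lambda$, which is precisely what you assemble, with the torus covering coming from the full-rank translation lattice $\Lambda$ and the free action of the finite group $\Gamma/\Lambda$ on $\R^n/\Lambda$. The only cosmetic point is that in the freeness argument ``$\gamma x=\lambda x$'' should be read as the isometry $\lambda$ applied to $x$ (i.e.\ $x$ plus the translation vector of $\lambda$), not scalar multiplication; the argument itself is sound.
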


\begin{teo}[Bieberbach Second]\label{Biebvar2}
	Let $M$ and $N$ be compact flat manifolds with isomorphic fundamental groups. Then there exists an affine equivalence\footnote{An \textit{affine equivalence} between two Riemannian manifolds $M$ and $N$ equipped with connections $\nabla^M$ and $\nabla^N$ respectively, is a diffeomorphism $F:M\to N$ such that $\nabla_X^M Y=\nabla^N_{dFX}(dF Y)$ for all $X,Y\in\X(M)$.} between $M$ and $N$ equipped with the corresponding Levi-Civita connections.
\end{teo}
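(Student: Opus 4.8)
The plan is to reduce the statement to the purely group-theoretic version of Bieberbach's Second Theorem stated above, and then to check that the affine conjugation it produces descends to a connection-preserving diffeomorphism of the quotients. The genuine analytic and algebraic content is already packaged in that group-theoretic theorem, so what remains is essentially bookkeeping about quotients and connections.

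First I would invoke the classical description of compact flat manifolds recalled in the text: write $M\cong\R^n/\Gamma_1$ and $N\cong\R^m/\Gamma_2$, where $\Gamma_1\subset\Iso(\R^n)$ and $\Gamma_2\subset\Iso(\R^m)$ are Bieberbach groups, so that by Theorem \ref{biebvartop} we have $\pi_1(M)\cong\Gamma_1$ and $\pi_1(N)\cong\Gamma_2$. The hypothesis then supplies an abstract isomorphism $f:\Gamma_1\to\Gamma_2$. Before applying the group-theoretic result I must guarantee that the two manifolds have the same dimension. This follows from Bieberbach's First Theorem (Theorem \ref{maximal}): the subgroup of pure translations $\Lambda_i$ of $\Gamma_i$ is the \emph{unique} maximal normal abelian subgroup of $\Gamma_i$, hence it is characterized in purely group-theoretic terms and is therefore preserved by any abstract isomorphism, so that $f(\Lambda_1)=\Lambda_2$. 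Since each $\Lambda_i$ is free abelian of rank equal to the dimension of the corresponding manifold, comparing ranks gives $n=m$.

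Next I would apply Bieberbach's Second Theorem (the group version above) to $f:\Gamma_1\to\Gamma_2\subset\Iso(\R^n)$: there exists an affine transformation $\alpha\in\A_n$ with $f(\beta)=\alpha\beta\alpha^{-1}$ for all $\beta\in\Gamma_1$, equivalently $\alpha\Gamma_1\alpha^{-1}=\Gamma_2$. I would then verify that $\alpha$ descends to a map $F:M\to N$ given by $F(\Gamma_1 x)=\Gamma_2\,\alpha(x)$: if $x'=\gamma x$ with $\gamma\in\Gamma_1$, then $\alpha(x')=(\alpha\gamma\alpha^{-1})\alpha(x)=f(\gamma)\alpha(x)$ with $f(\gamma)\in\Gamma_2$, so $F$ is well defined; since $\alpha$ is an affine isomorphism of $\R^n$ and the projections $\pi_1,\pi_2$ are local diffeomorphisms by Theorem \ref{biebvartop}, the map $F$ is a diffeomorphism whose inverse is induced by $\alpha^{-1}$.

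Finally I would check that $F$ is an affine equivalence. The key observation is that the Levi-Civita connection of a flat manifold $\R^n/\Gamma$ is exactly the quotient of the standard flat connection $\nabla^0$ of $\R^n$, whose parallel fields are the constant ones, because the covering projection is a local isometry. Any affine map $\alpha(x)=Ax+v$ has constant differential $d\alpha=A$, so it carries constant vector fields to constant vector fields and hence preserves $\nabla^0$, that is $d\alpha(\nabla^0_X Y)=\nabla^0_{d\alpha X}(d\alpha Y)$. Transporting this identity through the local diffeomorphisms $\pi_1$ and $\pi_2$ yields $dF(\nabla^M_X Y)=\nabla^N_{dFX}(dFY)$ for all $X,Y\in\X(M)$, which is precisely the condition for $F$ to be an affine equivalence. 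I expect the only delicate points to be the dimension-matching step, where one exploits the group-theoretic maximality of $\Lambda$ to force $f(\Lambda_1)=\Lambda_2$, and the clean identification of the downstairs Levi-Civita connection with the projected flat connection, which is exactly what makes affine (rather than merely isometric) maps suffice and explains why the conclusion is an affine equivalence rather than an isometry.
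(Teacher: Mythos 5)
Your proposal is correct and is exactly the intended argument: the paper states this manifold version without proof, presenting it as a direct reinterpretation of the group-theoretic Second Theorem (whose proof it delegates to \cite{Ch}), and your reduction --- matching dimensions via the group-theoretic characterization of the translation lattice $\Lambda$ from Theorem \ref{maximal}, conjugating by the affine map $\alpha$, descending to the quotients, and observing that affine maps preserve the projected flat connection --- is the standard way to fill in that reinterpretation. No gaps; the two delicate points you flag (the rank argument forcing $n=m$, and identifying the downstairs Levi-Civita connection with the quotient of $\nabla^0$) are precisely the ones that need saying.
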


\begin{teo}[Bieberbach Third]
	For each $n\in\N$, there are only finitely many classes of affine equivalence of compact flat manifolds of dimension $n$.
\end{teo}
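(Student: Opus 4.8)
The plan is to deduce this manifold-level statement directly from its group-theoretic counterpart, the abstract Bieberbach Third Theorem, by passing through the dictionary between compact flat manifolds and Bieberbach groups together with the manifold version of Bieberbach's Second Theorem (Theorem \ref{Biebvar2}). Concretely, I would set up a map from the set of affine equivalence classes of compact flat $n$-manifolds into the set of isomorphism classes of Bieberbach subgroups of $\Iso(\R^n)$, and then show this map is injective and that its target is finite; these two facts together force the domain to be finite.

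First I would record that the assignment $M\mapsto\pi_1(M)$ descends to affine equivalence classes. By the classical correspondence recalled above, every compact flat manifold $M$ of dimension $n$ is isometric to a quotient $\R^n/\Gamma$ for some Bieberbach subgroup $\Gamma\subset\Iso(\R^n)$, and by Theorem \ref{biebvartop} we have $\pi_1(M)\cong\Gamma$. If $M$ and $N$ are affinely equivalent, then in particular they are diffeomorphic, hence $\pi_1(M)\cong\pi_1(N)$. Therefore the isomorphism class of $\pi_1(M)$ depends only on the affine equivalence class of $M$, and we obtain a well-defined map $\Phi$ from affine equivalence classes of compact flat $n$-manifolds to isomorphism classes of Bieberbach groups.

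Next I would establish injectivity of $\Phi$, which is precisely the content of the manifold version of Bieberbach's Second Theorem (Theorem \ref{Biebvar2}): if $\pi_1(M)\cong\pi_1(N)$, then $M$ and $N$ are affinely equivalent. Thus two affine equivalence classes with the same image under $\Phi$ must coincide, so $\Phi$ is injective. To bound the target I would then invoke the abstract Bieberbach Third Theorem. Each group $\pi_1(M)\cong\Gamma$ is a Bieberbach subgroup of $\Iso(\R^n)$, and hence in particular a crystallographic subgroup; since there are only finitely many isomorphism classes of crystallographic subgroups of $\Iso(\R^n)$, the target of $\Phi$ is finite. An injection into a finite set has finite domain, so there are only finitely many affine equivalence classes of compact flat $n$-manifolds of dimension $n$, as claimed. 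The only point requiring even minor care is the well-definedness of $\Phi$, namely that an affine equivalence is in particular a homeomorphism and therefore induces an isomorphism of fundamental groups; all the substantive content is already packaged in the two abstract Bieberbach theorems, so I expect no genuine obstacle in the argument.
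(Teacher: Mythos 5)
Your proof is correct. The paper states this manifold-level version without proof, presenting it as a reinterpretation of the classical group-theoretic Bieberbach Third Theorem; your argument -- sending an affine equivalence class to the isomorphism class of its fundamental group, using Theorem \ref{biebvartop} to identify $\pi_1(\R^n/\Gamma)$ with $\Gamma$, injectivity via Theorem \ref{Biebvar2}, and finiteness of the target via the abstract Third Theorem -- is exactly the standard deduction the paper implicitly relies on, so there is nothing to add.
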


\medskip

\subsection{Lie groups and flat solvmanifolds}
In the context of Lie groups, the most interesting metrics are the left invariant ones, which satisfy that the left translations are isometries. When $G$ is an abelian Lie group, it is clear that every left invariant metric is flat. An interesting fact proved by Milnor is that some left invariant metrics may be flat, even though $G$ is not abelian. Indeed, he described the structure of a Lie group admitting one of these flat metrics (see \cite[Theorem 1.5]{Mi}). The precise criterion is stated as follows.

\begin{teo}\label{algebraplana}
	A left invariant metric on a Lie group $G$ is flat if and only if the associated Lie algebra $\g$ splits as an orthogonal direct sum $\g=\b\oplus\u$, where $\b$ is an abelian subalgebra, $\u$ is an abelian ideal and the linear transformation $\ad_b$ is skew-adjoint for every $b\in\b$.
\end{teo}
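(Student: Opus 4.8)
The plan is to run everything through the Koszul formula, which for left invariant fields on $G$ reads $2\langle\nabla_xy,z\rangle=\langle[x,y],z\rangle-\langle[y,z],x\rangle+\langle[z,x],y\rangle$, so that the Levi--Civita connection is encoded by the bilinear map $(x,y)\mapsto\nabla_xy$ on $\g$. Two formal consequences drive the argument. Since the metric is left invariant, $\langle\cdot,\cdot\rangle$ is constant on left invariant fields, hence each operator $\nabla_x$ is skew-adjoint. And since $R(x,y)=[\nabla_x,\nabla_y]-\nabla_{[x,y]}$, flatness is equivalent to $x\mapsto\nabla_x$ being a Lie algebra homomorphism $\g\to\so(\g)$. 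Torsion-freeness gives the further identity $[x,y]=\nabla_xy-\nabla_yx$.

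For the direction assuming the splitting $\g=\b\oplus\u$, the only nonzero brackets are the $[b,u]\in\u$. Substituting into Koszul and using that $\ad_b$ is skew-adjoint, I would compute $\nabla_{b_1}b_2=0$, $\nabla_bu=[b,u]$, $\nabla_ub=0$ and $\nabla_{u}u'=0$; equivalently $\nabla_b=\ad_b$ and $\nabla_u=0$. Flatness then follows from a short case analysis of $R(x,y)z$, the only case that is not immediately zero being $R(b_1,b_2)u=[[b_1,b_2],u]$, which vanishes by the Jacobi identity together with $[b_1,b_2]=0$.

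For the converse I would reconstruct $\b$ and $\u$ from $\nabla$. First, $\Ker\nabla$ is an abelian ideal: it is the kernel of a homomorphism, and $[u,u']=\nabla_uu'-\nabla_{u'}u=0$ on it. Second, it is invariant under every $\nabla_x$, because for $u\in\Ker\nabla$ one has $[x,u]=\nabla_xu$ and hence $\nabla_{\nabla_xu}=\nabla_{[x,u]}=[\nabla_x,\nabla_u]=0$; by skew-adjointness the complement $\b:=(\Ker\nabla)^\perp$ is $\nabla_x$-invariant as well. Setting $\u:=\Ker\nabla$, and granting the crux below that $\nabla(\g)$ is abelian (equivalently $[\g,\g]\subseteq\Ker\nabla$), I get for $b,b'\in\b$ that $[b,b']=\nabla_bb'-\nabla_{b'}b$ lies in $\b$ by invariance and in $[\g,\g]\subseteq\u$, hence in $\b\cap\u=0$; thus $\b$ is an abelian subalgebra. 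Finally the trilinear form $(b,b',z)\mapsto\langle\nabla_bb',z\rangle$ on $\b$ is symmetric in its first two arguments (as $\nabla_bb'=\nabla_{b'}b$) and antisymmetric in its last two (as $\nabla_b$ is skew-adjoint), so it vanishes identically; therefore $\nabla_bb'=0$ for $b,b'\in\b$, which with $\nabla_u=0$ yields $\nabla_yb=0$ for all $y\in\g$, and consequently $\ad_b=\nabla_b$ is skew-adjoint.

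The hard part is the crux just used: that the image $\nabla(\g)\subseteq\so(\g)$ is abelian, i.e. $[\nabla_x,\nabla_y]=\nabla_{[x,y]}=0$, equivalently $\langle\nabla_xz,\nabla_yw\rangle=\langle\nabla_yz,\nabla_xw\rangle$ for all $x,y,z,w$. This is the only step that is not purely formal, since the homomorphism and torsion identities alone only show that $\nabla(\g)$ is a subalgebra of the compact Lie algebra $\so(\g)$, hence reductive. I would attack it by showing that the semisimple part of $\nabla(\g)$ must act trivially on $\g$: using the torsion relation to express brackets through the $\nabla_x$-action and feeding this back into the vanishing of the curvature, one forces the relevant commutators $[\nabla_x,\nabla_y]$ to be zero. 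This is exactly the geometric content of Milnor's theorem, and I expect it to absorb essentially all of the difficulty.
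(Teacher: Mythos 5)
The paper does not actually prove this statement; it is quoted verbatim from Milnor and the reader is referred to \cite[Theorem 1.5]{Mi}, so there is no internal proof to compare against. Judged on its own terms, your proposal is correct and complete in the direction ``splitting $\Rightarrow$ flat'': the Koszul computation giving $\nabla_b=\ad_b$, $\nabla_u=0$ is right, and the curvature check is routine. Your reduction of the converse is also clean: $\u:=\Ker\nabla$ is an abelian ideal, $\u$ and $\b:=\u^\perp$ are $\nabla_x$-invariant, and \emph{granted} that $\nabla(\g)\subseteq\so(\g)$ is abelian, the symmetric-antisymmetric trilinear form trick finishes everything. (In fact you give away slightly more than you need: $[b,b']=\nabla_b b'-\nabla_{b'}b\in\b$ already follows from the invariance of $\b$, so $\b$ is a subalgebra unconditionally; only its \emph{abelianness} requires the crux.)

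The genuine gap is exactly the step you flag as the crux: that $R=0$ forces $[\nabla_x,\nabla_y]=0$, equivalently $[\g,\g]\subseteq\Ker\nabla$. What you have established formally is only that $x\mapsto\nabla_x$ is a homomorphism onto a subalgebra $\h\subseteq\so(\g)$, hence $\h$ is reductive of compact type and $\b\cong\h$ as Lie algebras; but compact-type subalgebras of $\so(n)$ are by no means all abelian, so no further purely formal manipulation of the homomorphism, torsion and skew-adjointness identities can close the argument. (Compare $\e(3)=\so(3)\ltimes\R^3$ with the orthogonal product metric: it satisfies every structural conclusion you have derived so far --- abelian ideal, complementary compact-type subalgebra acting skew-adjointly --- except abelianness of $\b$, and correspondingly it is not flat.) Your proposed strategy, ``show the semisimple part of $\nabla(\g)$ acts trivially by feeding the torsion relation back into the vanishing of the curvature,'' is the right target but is stated with no actual computation; one must genuinely extract from $R=0$ a quantitative obstruction (in Milnor's treatment this is where the real work of Theorem 1.5 lives, via his curvature formulas / the analysis of a compact semisimple image). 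As written, the proposal defers essentially all of the difficulty of the theorem to a step it does not carry out, so it is not yet a proof.
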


We will call $(G,\la\cdot,\cdot\ra)$ a flat Lie group if $\la\cdot,\cdot\ra$ is a flat left invariant metric on the Lie group $G$ and $(\g,\la\cdot,\cdot\ra_e)$ will be called a flat Lie algebra. 

Using Milnor's characterization of flat Lie algebras, Barberis, Dotti and Fino decompose even more a flat Lie algebra \cite[Proposition 2.1]{BDF}. 

\begin{teo}\label{alglieplanas}
	Let $(\g,\la\cdot,\cdot\ra_e)$ be a flat Lie algebra. Then $\g$ splits as an orthogonal direct sum, \[\g=\z(\g)\oplus\b\oplus [\g,\g]\] where $\b$ is an abelian subalgebra, $[\g,\g]$ is abelian and the following conditions are satisfied:
	
	\begin{enumerate}
		\item $\ad:\b\to\mathfrak{so}[\g,\g]$ is injective, $\dim [\g,\g]$ is even and $\dim\b\leq\frac{\dim[\g,\g]}{2}$;
		
		\item $\nabla_X=\ad_X$ for all $X\in\b$ and $\nabla_X=0$ for all $X\in\z(\g)\oplus[\g,\g]$.
	\end{enumerate}
\end{teo}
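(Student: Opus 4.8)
The plan is to start from Milnor's decomposition (Theorem \ref{algebraplana}) and refine it by splitting off the pieces on which $\ad$ acts trivially. Write $\g=\a\oplus\u$ with $\a$ abelian, $\u$ an abelian ideal and $\ad_a$ skew-adjoint for $a\in\a$. The bracket relations reduce to $[\a,\a]=0$, $[\u,\u]=0$ and $[a,u]=\ad_a(u)\in\u$, so $[\g,\g]=\sum_{a\in\a}\ad_a(\u)\subseteq\u$. First I would introduce the common fixed space $\u_0=\bigcap_{a\in\a}\ker(\ad_a|_\u)$ and the ineffective part $\a_0=\{a\in\a: \ad_a|_\u=0\}=\ker(\ad\colon\a\to\so(\u))$. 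Since each $\ad_a|_\u$ is skew-adjoint, $\u_0^{\perp}\cap\u=\sum_a\operatorname{im}(\ad_a|_\u)=[\g,\g]$, giving the orthogonal splitting $\u=\u_0\oplus[\g,\g]$; let $\b=\a_0^{\perp}\cap\a$, so $\a=\a_0\oplus\b$. A direct computation with the above brackets shows that $z=z_\a+z_\u\in\z(\g)$ if and only if $z_\a\in\a_0$ and $z_\u\in\u_0$, whence $\z(\g)=\a_0\oplus\u_0$. Assembling, $\g=(\a_0\oplus\u_0)\oplus\b\oplus[\g,\g]=\z(\g)\oplus\b\oplus[\g,\g]$ orthogonally, with $\b$ an abelian subalgebra (a subspace of the abelian $\a$) and $[\g,\g]$ abelian.

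For condition (1), since $\u_0$ and $[\g,\g]$ are $\ad_a$-invariant, each $\ad_b$, $b\in\b$, restricts to a skew-symmetric operator on $[\g,\g]$, so $\ad\colon\b\to\so[\g,\g]$ is well defined; it is injective because $b\in\b$ with $\ad_b|_{[\g,\g]}=0$ satisfies $\ad_b|_\u=0$ (as $\ad_b|_{\u_0}=0$ always), i.e. $b\in\a_0\cap\b=0$. The substantive point, and the part I expect to be the main obstacle, is to show $\dim[\g,\g]$ is even and $\dim\b\le\frac12\dim[\g,\g]$. Here I would use that $\{\ad_b|_{[\g,\g]}\}_{b\in\b}$ is a commuting family (as $\b$ is abelian) of skew-symmetric operators with no nonzero common fixed vector (such a vector would lie in $\u_0\cap[\g,\g]=0$). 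Simultaneously block-diagonalizing this family orthogonally decomposes $[\g,\g]$ into $\ad(\b)$-invariant subspaces of dimension $1$ or $2$, the $1$-dimensional ones being common fixed lines; the absence of common fixed vectors forces only $2$-dimensional blocks, so $\dim[\g,\g]=2N$ is even. On the $j$-th block, with its induced complex structure $J_j$, every $\ad_b$ acts as $\theta_j(b)J_j$ for a weight $\theta_j\in\b^{*}$, and injectivity of $\ad$ means the $\theta_j$ span $\b^{*}$; hence $\dim\b\le N=\frac12\dim[\g,\g]$.

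For condition (2) I would compute the left-invariant Levi-Civita connection from the Koszul formula, which on left-invariant fields reads
\[2\langle\nabla_XY,Z\rangle=\langle[X,Y],Z\rangle-\langle[Y,Z],X\rangle+\langle[Z,X],Y\rangle.\]
Taking $X=b\in\b$ and using skew-adjointness of $\ad_b$ (so $\langle[b,Z],Y\rangle=-\langle[b,Y],Z\rangle$) together with $\langle[Y,Z],b\rangle=0$, valid since $[Y,Z]\in[\g,\g]\subseteq\u$ and $\u\perp\a$, collapses the right-hand side to $2\langle[b,Y],Z\rangle$, giving $\nabla_b=\ad_b$. Taking $X=u\in\u$ and decomposing $Y,Z$ along $\a\oplus\u$, the three Koszul terms cancel in pairs after applying skew-adjointness, yielding $\nabla_u=0$; in particular $\nabla_X=0$ for $X\in[\g,\g]$ and for $X\in\u_0$. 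Finally $\nabla_X=\ad_X=0$ for $X\in\a_0$ (as $\a_0$ acts trivially), so $\nabla$ vanishes on all of $\z(\g)=\a_0\oplus\u_0$ while $\nabla_X=\ad_X$ on $\b$, which is exactly condition (2).
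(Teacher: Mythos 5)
Your proposal is correct and complete. Note, however, that the paper itself offers no proof of this statement: it is quoted verbatim from Barberis--Dotti--Fino \cite[Proposition 2.1]{BDF}, so there is no in-paper argument to compare against. Your derivation from Milnor's decomposition $\g=\a\oplus\u$ is the natural one and all the steps check out: the identity $(\bigcap_a\ker(\ad_a|_\u))^\perp=\sum_a\im(\ad_a|_\u)$ is valid because each $\ad_a|_\u$ is skew-adjoint (so $\ker^\perp=\im$) and orthogonal complements turn intersections into sums; the identification $\z(\g)=\a_0\oplus\u_0$ is a direct bracket computation; the simultaneous block-diagonalization of the commuting skew-symmetric family $\{\ad_b|_{[\g,\g]}\}$ correctly forces all blocks to be $2$-dimensional (a $1$-dimensional invariant line of a skew-symmetric operator is necessarily fixed, and common fixed vectors would lie in $\u_0\cap[\g,\g]=0$), which gives both the evenness of $\dim[\g,\g]$ and, via the weights $\theta_j$ spanning $\b^*$, the bound $\dim\b\le\tfrac12\dim[\g,\g]$; and the Koszul computation for condition (2) closes correctly in both cases (for $X=u\in\u$ the six terms cancel in the two pairs you indicate, and $\nabla_a=\ad_a=0$ on $\a_0$). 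The only presentational caveat is that your argument actually shows $\nabla_X=\ad_X$ for \emph{all} $X\in\a$, which is slightly stronger than stated and is what makes the $\z(\g)$ case immediate.
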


As a consequence, since $\{\ad_X\mid X\in\b\}$ is an abelian subalgebra of $\so[\g,\g]$, it is included in a maximal abelian subalgebra and as all of these are conjugated,  there exists an orthonormal basis $\B$ of $[\g,\g]$ such that for $X\in\b$, \[[\ad_X]_\mathcal{B}=\matriz{0&a_1&&&\\-a_1&0&&&\\&&\ddots&&\\&&&0&a_m\\&&&-a_m&0},\quad\text{with}\;a_1,\,\ldots\,, a_m\in\R, \;m=\frac{\dim [\g,\g]}{2}.\]
 
Note that a flat Lie algebra $(\g,\la\cdot,\cdot\ra_e)$ is $2$-step solvable, since $[\g,\g]$ is abelian; and unimodular\footnote{A Lie algebra $\g$ is said to be \textit{unimodular} if $\tr \ad_X=0$ for all $X\in\g$.}, since $\ad_X$ is skew-adjoint for all $X\in\g$.

\bigskip

Now, we will see that we can obtain a special class of compact flat manifolds as quotients of Lie groups with a flat invariant metric. We recall first a theorem about the differentiable structure of a quotient manifold $G/\varGamma$ in the case that $\varGamma$ is a closed subgroup of the Lie group $G$.

\begin{teo}
	Let $\varGamma$ be a closed subgroup of a Lie group $G$. The left coset space $G/\varGamma$ is a topological manifold of dimension equal to $\dim G-\dim \varGamma$, and has a unique smooth structure such that the quotient map $\pi:G\to G/\varGamma$ is a smooth submersion.
\end{teo}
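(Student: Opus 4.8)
The plan is to deduce this from Cartan's closed subgroup theorem together with a slice construction. First I would invoke the closed subgroup theorem: a closed subgroup $\varGamma$ of $G$ is automatically an embedded Lie subgroup, hence a properly embedded submanifold. Write $n=\dim G$, $k=\dim\varGamma$, $\g=\Lie(G)$ and $\h=\Lie(\varGamma)\subseteq\g$, and fix a linear complement $\mathfrak{m}$ so that $\g=\h\oplus\mathfrak{m}$ with $\dim\mathfrak{m}=n-k$. The target dimension $n-k$ is thus built in from the start, and the whole problem reduces to producing compatible charts modeled on $\mathfrak{m}\cong\R^{n-k}$.

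Next I would build a local slice at the identity. Consider the map $\Phi(X,\gamma)=\exp(X)\gamma$ from $\mathfrak{m}\times\varGamma$ to $G$; its differential at $(0,e)$ is the isomorphism induced by $\g=\h\oplus\mathfrak{m}$, so by the inverse function theorem $\Phi$ restricts to a diffeomorphism from a neighborhood of $(0,e)$ onto a neighborhood of $e$ in $G$. Setting $S=\exp(V)$ for a small ball $V\subseteq\mathfrak{m}$ around $0$ produces an $(n-k)$-dimensional submanifold transverse to $\varGamma$ at $e$. I would then show, after shrinking $V$, that $\pi|_S$ is injective, i.e. $S$ meets each coset at most once near $e\varGamma$. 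This is the step I expect to be the main obstacle: one must rule out distinct points $\exp(X_i),\exp(X_i')$ of the slice lying in a common coset and accumulating at $e$, which is exactly where closedness (equivalently, embeddedness) of $\varGamma$ is essential — without it the slice would keep reintersecting the same coset.

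Granting the slice, I would verify that $\pi$ is an open map, since $\pi^{-1}(\pi(W))=W\varGamma$ is open whenever $W$ is; hence $\pi(S)$ is an open neighborhood of $e\varGamma$ and $\pi|_S$ is a homeomorphism onto it. Composing with $\exp^{-1}$ gives a chart valued in $V\subseteq\mathfrak{m}$. To cover all of $G/\varGamma$ I would translate this chart by left multiplication, defining for each $g$ the chart around $g\varGamma$ via $x\mapsto g\cdot\pi(\exp x)$; the overlap maps then read as $X\mapsto \mathrm{pr}_{\mathfrak{m}}\big(\Phi^{-1}(g^{-1}\exp(X)\gamma)\big)$ and are smooth because $\Phi^{-1}$ is. In these coordinates $\pi$ becomes the projection $(X,\gamma)\mapsto X$, so it is a submersion, while $G/\varGamma$ is Hausdorff (because $\varGamma$ is closed) and second countable (inherited from $G$), giving a genuine smooth manifold of dimension $n-k$.

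Finally, for uniqueness I would use that submersions admit local smooth sections: if two smooth structures on $G/\varGamma$ both make $\pi$ a submersion, a local section of one composed with $\pi$ for the other exhibits the identity map $G/\varGamma\to G/\varGamma$ as smooth in each direction, so the two structures coincide.
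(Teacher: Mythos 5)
Your argument is correct and is the standard textbook proof of this result (the quotient/homogeneous space theorem, via Cartan's closed subgroup theorem and an $\exp$-slice transverse to $\varGamma$); the paper itself states this theorem as recalled background and gives no proof, so there is nothing to compare against. The one step you defer --- injectivity of $\pi|_S$ after shrinking the slice --- does go through by exactly the mechanism you name: if $\exp(X_i)=\exp(X_i')\gamma_i$ with $X_i,X_i'\to 0$, then $\gamma_i\to e$ in $G$, hence in $\varGamma$ by properness of the embedding, so eventually $(X_i,e)$ and $(X_i',\gamma_i)$ both lie in the domain where $\Phi$ is injective, forcing $X_i=X_i'$ and $\gamma_i=e$.
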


When $\varGamma$ is a discrete subgroup of a Lie group $G$, hence closed and $0$-dimensional, the quotient map $\pi:G\to G/\varGamma$ is a local diffeomorphism. Let us assume now that $G$ carries a left invariant Riemannian metric. Instead of working with the left coset space, we will consider the right coset space $\varGamma\backslash G$ (note that the theorem above is also valid for the right coset space), to be able to induce a Riemannian metric on $\varGamma\backslash G$ so that $\pi$ results an isometry: Given $g\in G$ and $u,v\in T_{\pi(g)}(\varGamma\backslash G)$, we define \[\la u,v\ra_{\pi(g)}=\la(d\pi)_g^{-1}u,(d\pi)_g^{-1}v\ra_g.\] The metric is well defined since $L_\gamma$ is an isometry of $G$ and $\pi\circ L_\gamma=\pi$ for all $\gamma\in\varGamma$, where $L_\gamma(g)=\gamma g$ for all $g\in G$. It is clear that $\pi$ is a local isometry. In particular, if the metric on $G$ is flat, the induced metric on $\varGamma\backslash G$.

\medskip

The most interesting case is when $\varGamma$ is a co-compact discrete subgroup of $G$, which is called a \textit{lattice} of $G$.

\begin{defi}
	A \textit{solvmanifold} $M$ is a compact quotient $\varGamma\backslash G$ where $G$ is a simply connected solvable Lie group and $\varGamma$ is a lattice of $G$. When $G$ is nilpotent, $M$ is said to be a \textit{nilmanifold}.
\end{defi}

\begin{obs}
	It is worth mentioning that in the literature, different definitions of solvmanifolds are also considered. For example, L. Auslander defines a solvmanifold as a \comillas{homogeneous space of connected solvable Lie group}, \cite[p. 398]{Aus61}, see also \cite{Aus73a,Aus73b}. 
	We will only consider solvmanifolds as in our definition above, and as a consequence they are always compact, orientable and paralellizable.
\end{obs}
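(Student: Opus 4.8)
The plan is to verify the three asserted properties separately, since they come from quite different sources. Compactness requires no argument: it is part of the definition, because a lattice $\varGamma$ is by hypothesis a co-compact discrete subgroup, so $\varGamma\backslash G$ is compact. The real substance of the remark is therefore the claim that this restrictive definition forces orientability and parallelizability, properties that may fail for Auslander's more general homogeneous spaces of (not necessarily simply connected) solvable groups.

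For parallelizability, I would descend the standard left-invariant frame of $G$ to the quotient. Fix a basis $X_1,\dots,X_n$ of $\g=\Lie(G)$ and view each $X_i$ as the corresponding left-invariant vector field on $G$. The key point is that these project to $\varGamma\backslash G$: since $\pi$ is a local diffeomorphism, it suffices to set $Y_i(\pi(g))=(d\pi)_g\,(X_i)_g$ and check independence of the representative $g$. If $\pi(g')=\pi(g)$ then $g'=\gamma g$ for some $\gamma\in\varGamma$, and using left-invariance of $X_i$ together with the identity $\pi\circ L_\gamma=\pi$ (already exploited above to induce the metric on $\varGamma\backslash G$), one computes
\[(d\pi)_{g'}(X_i)_{g'}=(d\pi)_{\gamma g}\,(dL_\gamma)_g(X_i)_g=d(\pi\circ L_\gamma)_g(X_i)_g=(d\pi)_g(X_i)_g.\]
Hence the $Y_i$ are globally well defined, and since the $X_i$ are pointwise linearly independent and each $(d\pi)_g$ is an isomorphism, $\{Y_1,\dots,Y_n\}$ is a global frame of $\varGamma\backslash G$, which is therefore parallelizable.

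Finally, orientability is a formal consequence: a global frame determines a nowhere-vanishing top-degree form, so every parallelizable manifold is orientable, and in particular so is $\varGamma\backslash G$. Among the three claims this last verification is the least delicate. The only genuine (and still routine) obstacle is the well-definedness of the descended frame, which is precisely where the use of right cosets $\varGamma\backslash G$, on which $\varGamma$ acts by left translations, is what lets the left-invariant vector fields project; had one quotiented on the right instead, one would need right-invariant fields.
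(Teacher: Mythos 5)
Your argument is correct, and it supplies precisely the standard justification that the paper leaves implicit (the remark is stated without proof): compactness is definitional, left-invariant vector fields descend to the right coset space $\varGamma\backslash G$ because $\pi\circ L_\gamma=\pi$, and orientability follows from the resulting global frame. Your closing observation about why left-invariant (rather than right-invariant) fields are the ones that project is also accurate and matches the paper's choice of quotient.
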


\medskip

Every simply connected solvable Lie group is diffeomorphic to $\R^n$ (see for instance \cite{Va}) for some $n\in\N$. This implies that a solvmanifold is aspherical, that is, the higher homotopy groups	vanish; and $\pi_1(\varGamma\backslash G)\cong\varGamma$. The fundamental group plays an important role in the study of solvmanifolds. Indeed, Mostow's Theorem show that the solvmanifolds are classified, up to diffeomorphism, by their fundamental groups. A proof of this can be found in \cite[Theorem 3.6]{Rag}.

\begin{teo}[Mostow]
	Let $G_1$ and $G_2$ be simply connected solvable Lie groups with $\varGamma_i$ a lattice in $G_i$ for $i=1,2$. If $\phi:\varGamma_1\to\varGamma_2$ is an isomorphism then there exists a diffeomorphism $\tilde{\phi}:G_1\to G_2$ such that $\tilde{\phi}|_{\varGamma_1}=\phi$ and $\tilde{\phi}(\gamma g)=\phi(\gamma)\tilde{\phi}(g)$ for all $\gamma\in\varGamma_1, g\in G_1$.
\end{teo}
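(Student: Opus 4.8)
The plan is to construct $\tilde{\phi}$ by induction on the solvable structure of $G_1$, with the base case being the analogous rigidity statement for nilpotent groups (Malcev's theorem) and the inductive mechanism being the filtration of $G_i$ by its nilradical. First I would dispose of the nilpotent case. If $G_1,G_2$ are simply connected and nilpotent, then $\exp\colon\g_i\to G_i$ is a diffeomorphism and $\log\varGamma_i$ spans a rational form $\g_i^{\Q}$ of $\g_i$ on which the Campbell--Hausdorff product has rational structure constants; thus $\varGamma_i$ is Zariski-dense and recovers $\g_i$ as the real points of $\g_i^{\Q}$. Any isomorphism $\phi\colon\varGamma_1\to\varGamma_2$ then carries $\g_1^{\Q}$ to $\g_2^{\Q}$ compatibly with the bracket, extends $\R$-linearly to a Lie algebra isomorphism $\g_1\to\g_2$, and exponentiates to a Lie group isomorphism $\tilde{\phi}\colon G_1\to G_2$ with $\tilde{\phi}|_{\varGamma_1}=\phi$; equivariance is automatic because $\tilde{\phi}$ is then a homomorphism. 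This settles the claim whenever $G_i$ is nilpotent, and in that case even yields a genuine isomorphism rather than merely a diffeomorphism.

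For the general solvable case I would exploit the nilradical $N_i\trianglelefteq G_i$. The structural inputs I need are: (i) $\varGamma_i\cap N_i$ is a lattice in $N_i$ and $\varGamma_i N_i$ is closed, so that $G_i/N_i\cong\R^{k_i}$ is abelian (the derived group lies in $N_i$) and $\varGamma_i/(\varGamma_i\cap N_i)$ is a lattice $\cong\Z^{k_i}$ in it; and (ii) $\varGamma_i\cap N_i$ is commensurable with the Fitting subgroup of the polycyclic group $\varGamma_i$, which is characteristic. From (ii), the isomorphism $\phi$ matches the two nilpotent pieces, $\phi(\varGamma_1\cap N_1)=\varGamma_2\cap N_2$ up to finite index, and descends to an isomorphism $\bar{\phi}\colon\Z^{k_1}\to\Z^{k_2}$, forcing $k_1=k_2$. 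Applying the nilpotent case, $\phi|_{\varGamma_1\cap N_1}$ extends to a Lie group isomorphism $\psi\colon N_1\to N_2$, and $\bar{\phi}$ extends to a linear isomorphism $L\colon\R^{k_1}\to\R^{k_2}$.

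It then remains to assemble $\psi$ and $L$ into the desired diffeomorphism. Choosing smooth sections $s_i\colon\R^{k_i}\to G_i$ of the projections gives global coordinates $G_i\cong N_i\times\R^{k_i}$ via $(n,t)\mapsto n\,s_i(t)$, and I would define $\tilde{\phi}$ by transporting the $N$-coordinate through $\psi$ and the base coordinate through $L$, correcting by the lattice so that $\tilde{\phi}|_{\varGamma_1}=\phi$. The hard part will be exactly this step: since the two extensions $1\to N_i\to G_i\to\R^{k_i}\to 1$ need not be equivalent, no choice of sections makes $\tilde{\phi}$ a homomorphism, and one must verify that the coordinate map is nonetheless a well-defined diffeomorphism satisfying $\tilde{\phi}(\gamma g)=\phi(\gamma)\tilde{\phi}(g)$. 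Controlling the resulting cocycle and twisting discrepancy --- equivalently, checking that the action of $\R^{k_1}$ on $N_1$ is intertwined by $\psi$ and $L$ up to an inner correction that can be absorbed by the lattice --- is the delicate point, and is precisely where the hypothesis that the $\varGamma_i$ are lattices (rather than arbitrary discrete subgroups) is used, through the density of $\varGamma_i$ needed to pin down $\psi$ and $L$ uniquely from $\phi$.
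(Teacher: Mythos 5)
The paper does not actually prove this theorem (it cites \cite[Theorem 3.6]{Rag}), so your proposal must stand on its own. The nilpotent case via Malcev rigidity is fine, but the reduction to it collapses at your structural input (ii), and with it the whole inductive mechanism. It is not true that $\varGamma_i\cap N_i$ is commensurable with the Fitting subgroup of $\varGamma_i$, nor that $\phi$ matches $\varGamma_1\cap N_1$ with $\varGamma_2\cap N_2$ up to finite index, nor that $k_1=k_2$ is forced. A counterexample lives inside this very paper: take $G_1=\R^3$ with $\varGamma_1=\Z^3$, and $G_2=E(2)=\R\ltimes_\varphi\R^2$ with $\varphi(t)$ the rotation by $t$ and $\varGamma_2=2\pi\Z\ltimes_\varphi\Z^2\cong\Z^3$. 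Both are simply connected solvable groups with lattices isomorphic to $\Z^3$, yet $N_1=\R^3$ while $N_2=\R^2$, so $k_1=0\neq 1=k_2$, and no isomorphism $\Z^3\to\Z^3$ can carry $\varGamma_1\cap N_1=\Z^3$ into $\varGamma_2\cap N_2\cong\Z^2$ even up to finite index. Concerning the commensurability claim itself: the Fitting subgroup of $\varGamma_2$ above is all of $\varGamma_2\cong\Z^3$, which contains $\varGamma_2\cap N_2\cong\Z^2$ with \emph{infinite} index; the same happens for the dicosm lattice $t_0\Z\ltimes_\varphi\Z^2$ with $\varphi(t_0)=-\Id$, whose Fitting subgroup is the translation subgroup $2t_0\Z\ltimes_\varphi\Z^2\cong\Z^3$. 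What is true is only that $\varGamma\cap N\supseteq[\varGamma,\varGamma]$ and that the Fitting subgroup contains $\varGamma\cap N$; the subgroup $\varGamma\cap N$ is not characteristic and cannot be recovered from the abstract group $\varGamma$.

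This mismatch of nilradicals is exactly why the theorem is hard and why its conclusion is only an equivariant diffeomorphism rather than an isomorphism --- which is essentially what your construction would deliver whenever the cocycles in your last step could be matched. The proof in \cite{Rag} does not induct directly along $1\to N_i\to G_i\to\R^{k_i}\to 1$; instead it places $G_1$ and $G_2$ inside a common ambient object (via the semisimple splitting, or equivalently the real algebraic hull of $\varGamma$), where the two nilradicals can be compared after twisting by a compact torus of automorphisms. Your closing paragraph correctly senses that a cocycle-matching difficulty is the delicate point, but the obstruction in fact appears one step earlier than where you place it, and it is fatal to the proposed induction as written.
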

\begin{coro}
	Two solvmanifolds with isomorphic fundamental groups are diffeo\-morphic.
\end{coro}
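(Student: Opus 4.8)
The plan is to deduce the corollary directly from Mostow's Theorem by showing that the equivariant diffeomorphism it produces descends to a diffeomorphism of the quotient solvmanifolds. First I would translate the hypothesis into the language of lattices: writing the two solvmanifolds as $M_i=\varGamma_i\backslash G_i$ with $G_i$ simply connected solvable and $\varGamma_i$ a lattice, the identification $\pi_1(M_i)\cong\varGamma_i$ noted above turns an isomorphism $\pi_1(M_1)\cong\pi_1(M_2)$ into a group isomorphism $\phi:\varGamma_1\to\varGamma_2$. Mostow's Theorem then furnishes a diffeomorphism $\tilde{\phi}:G_1\to G_2$ satisfying $\tilde{\phi}(\gamma g)=\phi(\gamma)\tilde{\phi}(g)$ for all $\gamma\in\varGamma_1,\ g\in G_1$.

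Next I would define $\bar{\phi}:\varGamma_1\backslash G_1\to\varGamma_2\backslash G_2$ by $\bar{\phi}(\varGamma_1 g)=\varGamma_2\,\tilde{\phi}(g)$ and check it is well defined: if $g'=\gamma g$ with $\gamma\in\varGamma_1$, then $\tilde{\phi}(g')=\phi(\gamma)\tilde{\phi}(g)$ lies in the same right $\varGamma_2$-coset as $\tilde{\phi}(g)$ because $\phi(\gamma)\in\varGamma_2$. Equivalently, $\bar{\phi}$ is the unique map making the square $\pi_2\circ\tilde{\phi}=\bar{\phi}\circ\pi_1$ commute, where $\pi_i:G_i\to\varGamma_i\backslash G_i$ are the canonical projections. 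Smoothness then comes for free: since $\pi_1$ is a surjective local diffeomorphism (indeed a smooth submersion) and $\pi_2\circ\tilde{\phi}$ is smooth, the factored map $\bar{\phi}$ is smooth.

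To obtain the inverse I would observe that $\tilde{\phi}^{-1}:G_2\to G_1$ is itself equivariant for $\phi^{-1}$: applying $\tilde{\phi}^{-1}$ to the relation $\tilde{\phi}(\gamma g)=\phi(\gamma)\tilde{\phi}(g)$ gives $\tilde{\phi}^{-1}(\phi(\gamma)h)=\gamma\,\tilde{\phi}^{-1}(h)$ for all $h\in G_2$. Hence the same descent argument produces a smooth map $\bar{\psi}:\varGamma_2\backslash G_2\to\varGamma_1\backslash G_1$, and the commuting squares together with the surjectivity of the $\pi_i$ force $\bar{\psi}=\bar{\phi}^{-1}$. Therefore $\bar{\phi}$ is a diffeomorphism and the two solvmanifolds are diffeomorphic.

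I do not expect a genuine obstacle here: once Mostow's Theorem is granted, the whole content is the equivariance bookkeeping together with the elementary fact that a $\phi$-equivariant diffeomorphism descends to a smooth map between quotients by local diffeomorphisms. The only point requiring a little care is verifying that the construction applied to $\phi^{-1}$ really returns the inverse of $\bar{\phi}$, rather than merely some smooth map in the opposite direction; this is exactly where I would invoke the uniqueness of the factored map coming from the surjectivity of the projections $\pi_i$.
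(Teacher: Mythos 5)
Your argument is correct and is exactly the descent argument the paper implicitly relies on: it states this as an immediate corollary of Mostow's Theorem without writing out the details, and your equivariance bookkeeping (including the check that the construction applied to $\phi^{-1}$ yields the inverse) fills in precisely those details. No gaps.
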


Concerning the construction of solvmanifolds and nilmanifolds, Malcev's theorem \cite{Ma} asserts that a simply connected nilpotent Lie group $G$ admits a lattice if and only if its Lie algebra has a basis such that the structure constants are rational numbers, so it is not so difficult to construct nilmanifolds. Unfortunately, it is not easy to construct solvmanifolds, since there is not a general method to find lattices in a given solvable Lie group. It is known (\cite{Mi}) that a necessary condition for a solvable Lie group to admit lattices is to be unimodular, but in general there are no known sufficient conditions. 
  
\bigskip

A special class of solvmanifolds that we are going to consider later are those obtained as quotients of almost abelian Lie groups, which we will call \textit{almost abelian solvmanifolds}.

\begin{defi}
A non-abelian Lie algebra $\g$ is said to be \textit{almost abelian} if $\g$ has a codimension 1 abelian ideal, that is $\g=\R x\ltimes_{\ad_x}\R^n$ for some $n\in\N$.
\end{defi}

It is clear that an almost abelian Lie algebra is solvable and it is nilpotent if and only if $\ad_x|_{\R^n}$ is nilpotent. Moreover, if we denote by $\g_A$ the almost abelian Lie algebra with $\ad_x|_{\R^n}=A$, it can be proved that $\g_A\cong\g_B$ if and only if there exist $0\neq\lambda\in\R$ and $C\in\GL(n,\R)$ such that $B=\lambda CAC^{-1}$.

The simply connected Lie group $G$ corresponding to an almost abelian Lie algebra $\g=\R x\ltimes_{\ad_x}\R^n$, is called an \textit{almost abelian} Lie group, and can be written as $G=\R\ltimes_{\varphi}\R^n$, where $\varphi(t)=\exp(t\ad_x)$.

A great advantage of almost abelian Lie groups is that there exists a criterion to determine whether they admit lattices or not (see \cite{B}).
 
 \begin{prop}\label{bock}
 	Let $G=\R\ltimes_\varphi\R^m$ be an almost abelian Lie group. Then $G$ admits a lattice $\varGamma$ if and only if there exists $t_0\neq 0$ such that $\varphi(t_0)$ is conjugate to an integer matrix invertible in $\Z$. In this case, the lattice $\varGamma$ is given by \[\varGamma=t_0\Z \ltimes_\varphi P\Z^m,\] where $P^{-1}{\varphi(t_0)}P$ is an integer matrix.
 \end{prop}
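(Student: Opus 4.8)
The plan is to prove the two implications separately, building the explicit lattice in the ``if'' direction and reducing the ``only if'' direction to the structure theory of lattices in solvable Lie groups. I work in coordinates $(t,v)\in\R\times\R^m$, so that the product in $G$ is $(t_1,v_1)(t_2,v_2)=(t_1+t_2,\,v_1+\varphi(t_1)v_2)$ and $(t,v)^{-1}=(-t,-\varphi(-t)v)$. Two elementary identities underlie everything: since $\varphi$ is a one-parameter group, $\varphi(t_0k)=\varphi(t_0)^k$ for all $k\in\Z$; and conjugation of an element $(0,w)$ of the normal subgroup $\{0\}\times\R^m=\ker p$ (where $p\colon G\to\R$, $p(t,v)=t$) by $(t_0,v_0)$ equals $(0,\varphi(t_0)w)$, the translation part $v_0$ cancelling. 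I abbreviate $M:=P^{-1}\varphi(t_0)P$.

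For the ``if'' direction, assume $M\in\GL(m,\Z)$ and set $\varGamma=t_0\Z\ltimes_\varphi P\Z^m$. First I would verify that $\varGamma$ is a subgroup: the second coordinate of $(t_0k_1,Pv_1)(t_0k_2,Pv_2)$ equals $P\bigl(v_1+M^{k_1}v_2\bigr)$, which lies in $P\Z^m$ because $M^{k_1}\in\GL(m,\Z)$ for every $k_1\in\Z$; closure under inverses is checked the same way. Discreteness is immediate since $t_0\Z\times P\Z^m$ is discrete in $\R\times\R^m\cong G$. For cocompactness I would observe that $p$ induces a fibration of $\varGamma\backslash G$ over $t_0\Z\backslash\R\cong S^1$ whose fibre is $P\Z^m\backslash\R^m\cong T^m$ (a mapping torus of the diffeomorphism of $T^m$ induced by $\varphi(t_0)$); a bundle with compact base and compact fibre is compact, so $\varGamma$ is a lattice of the asserted form.

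For the ``only if'' direction, let $\varGamma$ be any lattice and put $\Lambda=\varGamma\cap(\{0\}\times\R^m)$, a subgroup normalised by all of $\varGamma$ because $\{0\}\times\R^m$ is normal in $G$. The crucial point is the claim that $\Lambda$ is a lattice of full rank $m$ in $\R^m$ and that $p(\varGamma)$ is a nontrivial discrete subgroup of $\R$, hence of the form $t_0\Z$ with $t_0\neq0$. When $G$ is not nilpotent, $\{0\}\times\R^m$ is exactly the nilradical of $G$, and the general theorem that a lattice meets the nilradical in a lattice (see \cite{Rag}) gives both that $\Lambda$ is a lattice in $\R^m$ and that $\varGamma(\{0\}\times\R^m)$ is closed, so that $\varGamma/\Lambda$ embeds as a lattice of $G/\R^m\cong\R$; the nilpotent case, in which $\varphi$ is unipotent, is instead covered by Malcev's theorem \cite{Ma}, which provides a basis with rational structure constants making $\R^m$ a rational subgroup and yields the same conclusion. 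This structural input is the step I expect to be the main obstacle, since it is exactly where cocompactness of $\varGamma\backslash G$ is turned into the arithmetic fact that the pure translations in $\varGamma$ span $\R^m$, and it does not seem accessible by bare-hands estimates.

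Granting the claim, the conclusion is short. Choose $\gamma_0=(t_0,v_0)\in\varGamma$ with $p(\gamma_0)=t_0$. Since $\Lambda$ is normal in $\varGamma$ and conjugation by $\gamma_0$ acts on $\{0\}\times\R^m$ as the linear map $\varphi(t_0)$, we obtain $\varphi(t_0)\Lambda=\Lambda$. Writing the full lattice as $\Lambda=P\Z^m$ with $P\in\GL(m,\R)$, invariance says that $M=P^{-1}\varphi(t_0)P$ preserves $\Z^m$, hence has integer entries; as $\varphi(t_0)^{-1}=\varphi(-t_0)$ also preserves $\Lambda$, the inverse $M^{-1}$ is integral too, so $M\in\GL(m,\Z)$. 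Thus $\varphi(t_0)$ is conjugate to an integer matrix invertible over $\Z$, and the lattice exhibited in the first part is precisely the $\varGamma=t_0\Z\ltimes_\varphi P\Z^m$ of the statement.
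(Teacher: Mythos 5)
First, a remark on the comparison itself: the paper does not prove Proposition \ref{bock} at all --- it is quoted from Bock \cite{B} --- so there is no in-paper argument to measure yours against, and I can only assess your proof on its own terms. Your ``if'' direction is correct and is the standard computation. Your ``only if'' direction is correct and complete whenever $G$ is not nilpotent: there $\{0\}\times\R^m$ is indeed the nilradical (it is an abelian ideal of codimension $1$, and the nilradical contains it, so the only alternative is that $\g$ itself is nilpotent), Mostow's theorem gives that $\Lambda=\varGamma\cap\R^m$ is a full lattice and that $\varGamma\R^m/\R^m\cong t_0\Z$ is a lattice in $G/\R^m\cong\R$, and the conjugation identity $(t_0,v_0)(0,w)(t_0,v_0)^{-1}=(0,\varphi(t_0)w)$ finishes the argument. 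Since a non-abelian flat almost abelian Lie algebra has $\ad_x$ skew-adjoint and nonzero, hence semisimple rather than nilpotent, this non-nilpotent case is the only one the paper ever uses.

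The genuine gap is in your treatment of the nilpotent case. Malcev's theorem equips $\g$ with \emph{a} rational structure, but it does not make the particular ideal $\{0\}\times\R^m$ rational with respect to a given lattice, and the claim that $\varGamma\cap\R^m$ has full rank really does fail there. Concretely, in the Heisenberg group $G=\R\ltimes_\varphi\R^2$ with $\varphi(t)=\matriz{1&t\\0&1}$, the elements $a=(1,(0,0))$ and $b=(\sqrt2,(0,1))$ satisfy $[a,b]=(0,(1,0))=:c$, which is central, and $\varGamma=\langle a,b\rangle$ is a lattice: its image in $G/Z(G)\cong\R^2$ is the lattice spanned by $(1,0)$ and $(\sqrt2,1)$, and $\varGamma\cap Z(G)=\langle c\rangle$. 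But an element $a^ib^jc^k$ lies in $\{0\}\times\R^2$ only if $i+j\sqrt2=0$, i.e.\ $i=j=0$, so $\varGamma\cap\R^2=\langle c\rangle$ has rank $1$, not $2$, and your argument extracts no $t_0$ from this lattice. The case is easy to repair, but by a different mechanism: if $G$ is nilpotent and non-abelian then $\varphi(t_0)=\exp(t_0\ad_x)$ is unipotent for every $t_0\neq0$, hence conjugate over $\R$ to its Jordan normal form, which is already a matrix in $\GL(m,\Z)$; so the ``only if'' direction is immediate there (and your ``if'' direction then shows every such $G$ admits lattices). You should either insert this observation or explicitly restrict the structural claim about $\Lambda$ to the non-nilpotent case.
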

 
 \medskip
 
\section{Holonomy of flat solvmanifolds}
In this section we are going to consider solvmanifolds $\varGamma\backslash G$ equipped with a flat metric induced by a flat left invariant metric on $G$, using the characterization given by Milnor mentioned above. 

We do not assume that $G$ acts by isometries on $\varGamma\backslash G$. In fact, it is well known that if $M$ is a flat Riemannian manifold with a transitive group of isometries, then $M\cong T^m\times \R^k$ for some $k,m\in\N$ (see \cite{Wo}).	

\medskip

In general, if a simply connected Lie group $G$ admits a flat left invariant metric, then $G$ is isometric as a Riemannian manifold to $\R^n$ with the usual metric. We can identify $\varGamma$ with a subgroup of isometries of $\R^n$ as follows:

\medskip

Let $\psi:G\to\R^n$ an isometry. Define a map $i:G\to \Iso(\R^n)$ by $i(g)=\tau_g$, where $\tau_g=\psi\circ L_g\circ \psi^{-1}$. It is clear that $i$ is an injective group homomorphism, and therefore we can identify $\varGamma$ with its image and consider $ \varGamma\subset\Iso(\R^n)$.
 
The solvmanifold $\varGamma\backslash G$ is isometric to $\R^n/\varGamma$ by considering the map $\phi([g])=[\psi(g)]$. Observe that $\phi$ is well defined since \[\phi([\gamma g])=[\psi(\gamma g)]=[\tau_\gamma(\psi(g))]=[\psi(g)]=\phi([g]).\] Furthermore it is injective, since if $[\psi(g)]=[\psi(h)]$ then $\psi(g)=\tau_\gamma (\psi(h))$, which implies $\psi(g)=\psi(\gamma h)$. Thus, $g=\gamma h$, or equivalently, $[g]=[h]$. The surjectivity is clear, and as $\phi$ is a composition of local isometries and bijective, $\phi$ results an isometry. Therefore $\varGamma$ is a Bieberbach subgroup of $\Iso(\R^n)$ and 
$\hol(\R^n/\varGamma)\cong r(\varGamma)$. We recall that $r(\varGamma)$ is identified with the finite group $\varGamma/\Lambda$ where $\Lambda$ is the maximal normal abelian subgroup of $\varGamma$. In conclusion,
\begin{equation}\label{eqhol}
\hol(\varGamma\backslash G)\cong \varGamma/\Lambda.
\end{equation}

\begin{obs}\label{primitivo}
	A compact flat manifold $M=\R^n/\Gamma$ is called \textit{primitive} when its first Betti number $\beta_1(M)$ is zero. It is known that $M$ is primitive if and only if the center of $\Gamma$ is trivial, see for instance \cite{Sz}. Any solvmanifold has $\beta_1\geq 1$ due to a result of Hattori \cite{Hat}. Therefore, any flat solvmanifold $\varGamma\backslash G$ is not primitive and $\varGamma$ has non trivial center.
\end{obs}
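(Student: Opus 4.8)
The statement combines $\beta_1(\varGamma\backslash G)\ge 1$ (non-primitivity) with $Z(\varGamma)\ne\{e\}$, and the excerpt already records their equivalence, so I would aim to produce a single nonzero holonomy-fixed vector and deduce both. First, on the group side: by Bieberbach's First Theorem (Theorem~\ref{maximal}) the translation lattice $\Lambda$ is the maximal normal abelian subgroup of $\varGamma$, so the center, being normal and abelian, lies inside $\Lambda$. Writing $\gamma=(A,v)$ with linear part $A\in r(\varGamma)$, a translation $(\Id,\lambda)$ is central exactly when $A\lambda=\lambda$ for every $(A,v)\in\varGamma$; hence $Z(\varGamma)=\Lambda^{r(\varGamma)}$ is the sublattice fixed by the linear holonomy. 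Since for a compact flat manifold $\beta_1$ equals $\dim(\R^n)^{r(\varGamma)}$, both conclusions follow once I show $(\R^n)^{r(\varGamma)}\ne 0$.

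To find a fixed vector I would use the flat Lie algebra structure of Theorem~\ref{alglieplanas}. The projection $\pi\colon G\to\varGamma\backslash G$ is a local isometry, and left-invariant vector fields on $G$ are invariant under the left translations $L_\gamma$, $\gamma\in\varGamma$, so every left-invariant field descends to $\varGamma\backslash G$; if it is moreover parallel on $G$ it descends to a parallel field on the quotient, i.e.\ to a holonomy-fixed vector. By condition~(2) of Theorem~\ref{alglieplanas}, $\nabla_Z=\ad_Z$ for $Z\in\b$ and $\nabla_Z=0$ for $Z\in\z(\g)\oplus[\g,\g]$; hence a left-invariant field $X$ is parallel iff $[Z,X]=0$ for all $Z\in\b$. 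The centralizer of $\b$ in $\g$ contains $\b$ (which is abelian) and $\z(\g)$ (which is central), so $(\R^n)^{r(\varGamma)}\supseteq\b\oplus\z(\g)$.

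It remains to see that this subspace is nonzero and, for the center, that it meets the lattice. Since $\g=\z(\g)\oplus\b\oplus[\g,\g]$ is solvable and nonzero, its derived algebra is proper, $[\g,\g]\subsetneq\g$, so $\b\oplus\z(\g)\ne 0$ and $\beta_1\ge 1$: the solvmanifold is not primitive. For the center, I would average over the finite group: with $P=\tfrac{1}{|r(\varGamma)|}\sum_{A\in r(\varGamma)}A$ the orthogonal projection onto $(\R^n)^{r(\varGamma)}$, the $r(\varGamma)$-invariance and full rank of $\Lambda$ give some $\lambda_0\in\Lambda$ with $P\lambda_0\ne 0$, whence $\sum_{A}A\lambda_0$ is a nonzero element of $\Lambda^{r(\varGamma)}=Z(\varGamma)$. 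I expect the only real friction to be bookkeeping: justifying $\beta_1=\dim(\R^n)^{r(\varGamma)}$, the descent of parallel left-invariant fields, and the averaging step. If external citations are allowed, the shortest path is exactly the one the remark uses---Hattori's $\beta_1\ge 1$ for solvmanifolds plus the cited primitivity criterion---but the route above stays internal to Theorem~\ref{alglieplanas}.
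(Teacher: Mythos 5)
Your argument is correct, but it is a genuinely different route from the paper's. The paper proves this remark purely by citation: Hattori's theorem gives $\beta_1\geq 1$ for \emph{every} solvmanifold (flat or not), and the quoted equivalence ``primitive $\iff$ trivial center'' from \cite{Sz} then yields the nontrivial center. You instead give a self-contained argument special to \emph{flat} solvmanifolds: the decomposition $\g=\z(\g)\oplus\b\oplus[\g,\g]$ of Theorem \ref{alglieplanas} together with $\nabla_X=\ad_X$ on $\b$ and $\nabla_X=0$ elsewhere shows that every element of $\z(\g)\oplus\b$ is a parallel left-invariant field; since $\g$ is solvable, $[\g,\g]\subsetneq\g$ forces $\z(\g)\oplus\b\neq 0$, so the quotient carries a nonzero parallel field, i.e.\ $(\R^n)^{r(\varGamma)}\neq 0$, and averaging a lattice vector over the finite group $r(\varGamma)$ produces a nonzero element of $\Lambda^{r(\varGamma)}=Z(\varGamma)$. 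The identification $Z(\varGamma)=\Lambda^{r(\varGamma)}$ via maximality of $\Lambda$ and the computation $(A,v)(\Id,\lambda)(A,v)^{-1}=(\Id,A\lambda)$ is sound, as is the averaging step (note $\Lambda$ is $r(\varGamma)$-stable by normality and spans $\R^n$ by Bieberbach's First Theorem). What your route buys is independence from Hattori's spectral-sequence result and an explicit central element; what it costs is that you must invoke the standard facts that $\beta_1$ of a compact flat manifold equals $\dim(\R^n)^{r(\varGamma)}$ (or at least that a nonzero parallel $1$-form forces $\beta_1\geq 1$) and that parallel left-invariant fields descend through the local isometry $G\to\varGamma\backslash G$ --- both true and both flagged by you, but neither established in the paper, so in the end each approach leans on one external standard fact. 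The paper's citation is shorter and covers all solvmanifolds; yours stays internal to the flat setting actually needed here.
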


\medskip

Combining Milnor's result (Theorem \ref{algebraplana}) with equation \eqref{eqhol} and the maximality of the normal abelian subgroup $\Lambda$, we give an elementary proof of the result in \cite{Aus}.

\begin{teo}\label{holabeliana}
	The holonomy group of every flat solvmanifold is abelian.
\end{teo}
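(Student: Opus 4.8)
The plan is to use the explicit structure furnished by Milnor's theorem to read off the rotational parts of the isometries in $\varGamma$ directly. Write $\g=\b\oplus\u$ as in Theorem \ref{algebraplana}, with $\b$ an abelian subalgebra, $\u$ an abelian ideal, and $\ad_b$ skew-adjoint for every $b\in\b$. Since $[\b,\b]=0$, $[\u,\u]=0$ and $[\b,\u]\subseteq\u$, the Lie algebra is a semidirect sum $\g=\b\ltimes_{\ad}\u$, and correspondingly the simply connected group is $G=B\ltimes_\Phi U$ with $B=\exp\b\cong\R^{\dim\b}$, $U=\exp\u\cong\R^{\dim\u}$ and $\Phi(b)=\exp(\ad_b|_{\u})$. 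Because each $\ad_b$ is skew-adjoint, $\Phi(b)\in\ort(\u)$.

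First I would observe that in the global coordinates $(b,u)\in\b\times\u$ coming from this decomposition, the flat left-invariant metric is simply the standard Euclidean one. The inner product $\la\cdot,\cdot\ra_e$ is the standard product on $\b\oplus\u$, and since $\Phi(b)$ is orthogonal for every $b$, the Euclidean metric $\sum db_i^2+\sum du_j^2$ is invariant under left translations; by uniqueness of the left-invariant extension of a value at the identity it must coincide with Milnor's flat metric. Hence one may take the isometry $\psi$ of \S3 to be the identity and regard $i(g)=L_g$ as a genuine Euclidean isometry of $\R^n$.

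Next I would compute the rotational part. Left translation by $(b,u)$ sends $(b',u')\mapsto(b+b',\,u+\Phi(b)u')$, an affine isometry whose linear part is the identity on $\b$ and $\Phi(b)=\exp(\ad_b|_{\u})$ on $\u$; equivalently $r(i(b,u))=\exp(\ad_b)\in\ort(n)$, independent of $u$. In particular $U$ acts by pure translations, so $r\circ i$ is trivial on the normal subgroup $U$ and descends to a homomorphism $\bar r\colon G/U\cong B\to\ort(n)$, $b\mapsto\exp(\ad_b)$.

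Finally, since $\b$ is abelian we have $[\ad_{b_1},\ad_{b_2}]=\ad_{[b_1,b_2]}=0$, so the operators $\exp(\ad_b)$ pairwise commute and $\bar r(B)$ is abelian; a fortiori $r(\varGamma)\subseteq\bar r(B)$ is abelian. Combining this with \eqref{eqhol} and the maximality of $\Lambda$ (giving $\hol(\varGamma\backslash G)\cong\varGamma/\Lambda\cong r(\varGamma)$) yields the theorem. The only delicate point is the verification in the second step that these coordinates realize the flat metric as the standard Euclidean metric — equivalently, that the $U$-translations are parallel; once this is in place, both the identification of the rotational parts with $\exp(\ad_b)$ and their commutation are immediate.
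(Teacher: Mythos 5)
Your argument is correct, but it reaches the conclusion by a genuinely different route than the paper. The paper never computes the rotational parts of the elements of $\varGamma$: it works purely algebraically, noting that $\varGamma\cap\R^\l$ is a normal abelian subgroup of $\varGamma$, hence contained in the maximal one $\Lambda$ by Bieberbach's First Theorem, so that $\varGamma/\Lambda$ is a quotient of $\varGamma/(\varGamma\cap\R^\l)$, which embeds into $(\R^k\ltimes\R^\l)/\R^\l\cong\R^k$ and is therefore abelian. You instead verify that the semidirect product coordinates on $G=B\ltimes_\Phi U$ realize the flat left-invariant metric as the standard Euclidean metric --- which is indeed the ``delicate point,'' and your justification (the Euclidean metric is left-invariant because each $\Phi(b)$ is orthogonal, and a left-invariant metric is determined by its value at $e$) is sound --- and then read off $r(i(b,u))=\Id_\b\oplus\exp(\ad_b|_\u)$ explicitly, concluding from $[\ad_{b_1},\ad_{b_2}]=\ad_{[b_1,b_2]}=0$ that $r(\varGamma)$ is abelian. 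Both proofs ultimately exploit the same structural fact, namely that the rotational data factors through the abelian quotient $G/U\cong\b$; the trade-off is that the paper's version avoids any metric computation at the cost of invoking maximality of $\Lambda$, whereas yours requires the coordinate verification but yields strictly more information: it exhibits the holonomy group concretely as a finite subgroup of the connected abelian group $\{\exp(\ad_b)\mid b\in\b\}\subset\SO(\l)$, which is essentially the viewpoint the paper adopts later when computing holonomy groups of almost abelian solvmanifolds in Theorem \ref{holonomia_lcm}.
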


\begin{proof}[Proof:\nopunct]
	Let $G$ be a simply connected Lie group with a flat left invariant metric and let $\varGamma$ be a lattice in $G$. If $\g=\lie(G)$, it follows from Theorem \ref{algebraplana} that we can write $\g=\b\oplus\u$ where $\b$ is an abelian subalgebra, $\u$ is an abelian ideal and $\ad_x:\u\to\u$ is skew-adjoint for every $x\in\b$. We can rewrite $\g$ as $\g=\R^k\ltimes_{\ad}\R^\l$ where $k=\dim\b$ and $\l=\dim\u\geq 2$. Therefore $G$ can be written as the semidirect product $G=\R^k\ltimes_{\varphi}\R^\l$ for some homomorphism $\varphi:\R^k\to\Aut(\R^\l)$. 
	
	Since $\varGamma\subset \R^k\ltimes_\varphi\R^\l$ is a Bieberbach group, there exists a maximal normal abelian subgroup $\Lambda$ of finite index in $\varGamma$. The subgroup $\varGamma\cap\R^\l$ is a normal abelian subgroup of $\varGamma$. Since $\Lambda$ is maximal (Proposition \ref{maximal}) we have that $\varGamma\cap\R^\l\subset \Lambda$. We show next that $\varGamma/(\varGamma\cap\R^\l)$ is abelian. Indeed, if we consider the composition \[\varGamma\xhookrightarrow[]{\iota}\R^k\ltimes\R^\l\stackrel{\pi}{\longrightarrow}(\R^k\ltimes\R^\l)/\R^\l,\]  we have \[\varGamma/(\varGamma\cap\R^\l) \cong \im(\pi\circ\iota)<(\R^k\ltimes\R^\l)/\R^\l\cong\R^k,\] then the group $\varGamma/(\varGamma\cap\R^\l)$, being isomorphic to a subgroup of an abelian group, is abelian.
	
	Now, the natural map $\varGamma/(\varGamma\cap\R^\l)\to\varGamma/\Lambda$ is an epimorphism and  $\varGamma/(\varGamma\cap\R^\l)$ is abelian, therefore $\varGamma/\Lambda$ is abelian. It follows from \eqref{eqhol} that the holonomy group is abelian.
\end{proof}

\smallskip

Our next goal is to prove the converse of Theorem \ref{holabeliana}, namely, that every finite abelian group can be realized as the holonomy group of a flat solvmanifold. To do this, we focus on studying flat almost abelian Lie groups, in which we can construct lattices using Proposition \ref{bock}. First, we characterize the almost abelian Lie algebras in terms of the decomposition of a flat Lie algebra as in Theorem \ref{alglieplanas}.

\begin{teo}\label{casiabelianaplana}
	Let $(\g,\la\cdot,\cdot\ra)$ be a flat Lie algebra and let \begin{equation}\label{eq12}
	\g=\z(\g)\oplus\b\oplus [\g,\g]
	\end{equation} be its decomposition as an orthogonal direct sum as in Theorem \ref{alglieplanas}. Then $\g$ is almost abelian if and only if $\dim\b=1$. 
\end{teo}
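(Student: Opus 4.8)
The plan is to organize everything around the abelian ideal $\u:=\z(\g)\oplus[\g,\g]$, which by the orthogonal decomposition \eqref{eq12} is exactly $\b^\perp$. I would first record two facts valid for any flat Lie algebra. The subspace $\u$ is an \emph{abelian ideal}: it is abelian because $\z(\g)$ is central and $[\g,\g]$ is abelian, and it is an ideal because $[\g,\u]\subseteq[\g,\g]\subseteq\u$. Moreover $\dim\u=\dim\g-\dim\b$. With this in hand both implications become statements about the codimension of $\u$ versus the codimension of the almost abelian ideal.

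For the easy direction, suppose $\dim\b=1$. Then $\u$ has codimension $1$, so choosing a generator $X_0$ of $\b$ gives $\g=\R X_0\ltimes_{\ad_{X_0}}\u$. Since $\ad\colon\b\to\so[\g,\g]$ is injective and $\b\neq 0$, we have $[\g,\g]\neq 0$, so $\g$ is non-abelian; thus $\u$ is a codimension-$1$ abelian ideal and $\g$ is almost abelian by definition.

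For the converse, assume $\g=\R x\ltimes_{\ad_x}\v$ with $\v$ an abelian ideal of codimension $1$. The preliminary step I would carry out is $\u\subseteq\v$. The inclusion $[\g,\g]\subseteq\v$ is immediate, since $[\g,\g]=[\R x\oplus\v,\R x\oplus\v]=\ad_x(\v)\subseteq\v$ because $\v$ is an abelian ideal. For $\z(\g)\subseteq\v$, write a central element as $z=\alpha x+w$ with $w\in\v$; bracketing against $\v$ gives $\alpha\,\ad_x|_\v=0$, and since $\g$ is non-abelian we have $\ad_x|_\v\neq 0$, forcing $\alpha=0$ and hence $z\in\v$. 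Once $\u\subseteq\v$ is known, the orthogonal splitting $\g=\u\oplus\b$ yields $\v=\u\oplus(\v\cap\b)$ by modularity, so that $\dim(\v\cap\b)=\dim\v-\dim\u=\dim\b-1$.

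It then remains to prove $\v\cap\b=0$, which I expect to be the crux of the argument. Take $b\in\v\cap\b$. Because $b\in\v$ and $\v$ is abelian while $[\g,\g]\subseteq\u\subseteq\v$, bracketing $b$ against $[\g,\g]$ gives $\ad_b|_{[\g,\g]}=0$; but $\ad\colon\b\to\so[\g,\g]$ is injective by Theorem \ref{alglieplanas}, so $b=0$. Hence $\dim(\v\cap\b)=0$ and $\dim\b=1$. The only delicate points are the precise identification of $\z(\g)$ inside $\v$ (where non-abelianness of $\g$ is used) and the repeated appeal to the injectivity of $\ad$ on $\b$; the rest is bookkeeping with the orthogonal decomposition \eqref{eq12}.
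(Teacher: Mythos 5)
Your proof is correct, and in the converse (hard) direction it takes a genuinely different route from the paper's. Both arguments share the same skeleton: the easy direction is identical (for $\dim\b=1$ the ideal $\z(\g)\oplus[\g,\g]$ is abelian of codimension $1$), and both begin the converse by showing that $\z(\g)\oplus[\g,\g]$ is contained in the given codimension-$1$ abelian ideal, using the non-abelianness of $\g$ to place the center inside it. But from there the mechanisms diverge. The paper normalizes the complement by choosing $x\in\u^\perp$, uses the inner product to show $x\in\b$ (computing $\la x,z\ra=\norm{z}^2$ and $\la x,y\ra=\norm{y}^2$ from the orthogonality of the decomposition \eqref{eq12}), and then shows any $v\in\b$ with $v\perp x$ is central, hence zero since $\z(\g)\cap\b=0$; this yields the slightly stronger conclusion $\b=\R x=\u^\perp$. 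You instead avoid the metric entirely in this direction: a modular-law dimension count reduces everything to $\v\cap\b=0$, and an element $b\in\v\cap\b$ satisfies $\ad_b|_{[\g,\g]}=0$ because $\v$ is abelian and contains $[\g,\g]$, so $b=0$ by the injectivity of $\ad\colon\b\to\so[\g,\g]$ from Theorem \ref{alglieplanas}(1). Your version is arguably cleaner and works with an arbitrary codimension-$1$ abelian ideal $\v$ without choosing an orthogonal generator; the paper's version buys the explicit identification of $\b$ as the orthogonal complement of the ideal, which feeds directly into the normal form $\g=\R x\ltimes_{\ad_x}\R^{s+2n}$ with $\ad_x$ skew-adjoint used immediately afterwards (though that normal form also follows from $\dim\b=1$ together with Theorem \ref{alglieplanas}(2)). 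One small point worth keeping from your write-up: you explicitly check that $\dim\b=1$ forces $\g$ to be non-abelian (via injectivity of $\ad$ on $\b$), which the paper leaves implicit.
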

\begin{proof}[Proof:\nopunct] Let us assume first that $\g$ is almost abelian and let $\u\cong\R^n$ be an abelian ideal of codimension 1. Let $x\neq 0$ such that $x\in\u^\perp$, then $\g$ can be written as $\g=\R x\ltimes_{\ad_x} \u$, with $\R x\perp \u$. We will prove that $\b=\R x$ in several steps:
	
	$\ri$ First, we prove that $[\g,\g]\oplus\z(\g)\subset \u$. Indeed, $[\g,\g]\subset\u$ given that $\u$ is an ideal and also we have that $\z(\g)\subset\u$ because if $z\in\z(\g)$ decomposes as $z=ax+u$ for $a\in\R, a\neq 0,$ and $u\in\u$, then \[0=[z,x]=[u,x].\] Since $\u$ is abelian, it follows that $u\in\z(\g)$ and thus, $x\in\z(\g)$ which contradicts the fact that $\g$ is not abelian. Therefore, $a=0$ and $z\in\u$ so that \[\z(\g)\oplus [\g,\g]\subset \u.\] 
	
	$\rii$ We show next that $x\in\b$. We write $x=z+b+y$ according to \eqref{eq12} with $z\in\z(\g), b\in\b$ and $y\in[\g,\g]$. Due to $\ri$ we get $0=\la x,z\ra=\la x,y\ra$. Since $\la x,z\ra=\norm{z}^2+\la b,z\ra+\la y,z\ra=\norm{z}^2$ and similarly $\la x,y\ra=\norm{y}^2$,  it follows that $0=\norm{z}^2=\norm{y}^2$ which implies $x\in\b$. 
	
	\medskip
	
	$\riii$ Finally, we will prove that $\b=\R x$. If we consider an element $v\in\b, v\perp x$, then $v\in \b\cap\u$. As $x\in\b$ and $\u$ is abelian it follows that $v\in\z(\g)$ and thus $v\in\z(\g)\cap\b=\{0\}$. This proves that $\b=\R x$.
	
	\medskip
	
	Conversely, if $\dim\b=1$ then the ideal $\z(\g)\oplus [\g,\g]$ is abelian and has codimension 1. Therefore, $\g$ is almost abelian.
\end{proof}

As a consequence of this theorem we have that if $\g=\R x\ltimes_{\ad_x}\R^m$ is a flat Lie algebra, then $\ad_x:\R^m\to\R^m$ is skew-adjoint and therefore \[\R^m=\Ker(\ad_x)\oplus \im(\ad_x),\] where $\Ker(\ad_x)=\z(\g)$,  $\im(\ad_x)=[\g,\g]$ and $\dim[\g,\g]$ is even. Therefore there exist bases  $\mathcal{B}=\{e_1,f_1\,\ldots\,,e_n,f_n\}$ of $[\g,\g]$ and  $\mathcal{B}'$ of $\z(\g)$ such that \[ [\ad_x]=\matriz{0_{s\times s} &&&&&\\&0&a_1&&&\\&-a_1&0&&&\\&&&\ddots&&\\&&&&0&a_n\\&&&&-a_n&0},\] where $s=\dim\z(\g)$ and $a_1,\ldots,a_n\in\R-\{0\}$.

\medskip

The Lie algebra $\g=\R x\ltimes_{\ad_x}\R^{s+2n}$ is completely determined by $s,a_1,\,\ldots\,,a_n$. Moreover, by reordering the basis we can assume that $a_1\geq\cdots\geq a_n>0$.

\medskip

The simply connected Lie group associated to $\g=\R x\ltimes_{\ad_x} \R^{s+2n}$ is \[G=\R\ltimes_{\varphi}\R^{s+2n},\] where
\[\varphi(t)=\exp(t\ad_x)=\matriz{\Id_{s\times s}&0\\0&\tilde{\varphi}(t)}\] with
\begin{equation}\label{eq11}
\tilde{\varphi}(t)=\matriz{\cos(a_1 t)&\sin(a_1 t)&&&\\-\sin(a_1 t)&\cos(a_1 t)&&&\\&&\ddots&&\\&&&\cos(a_n t)&\sin(a_n t)\\&&&-\sin(a_n t)&\cos(a_n t)}
\end{equation}
and $a_1\geq\cdots\geq a_n>0$.

Now we want to construct lattices in these almost abelian Lie groups. Concerning Proposition \ref{bock}, it can be guessed that the values of $t_0$ and $a_1,\ldots,a_n$ so that $\tilde{\varphi}(t)$ is conjugate to an integer matrix are closely related to the multiples of $\pi$. We prove next that this assertion is true, proving that the order of the matrix $\tilde{\varphi}(t)$ is finite, using a theorem of Kronecker about roots over $\C$ of monic integer polynomials. A proof can be found in \cite{Gre}.

\begin{teo}[Kronecker]
	Let $\lambda\neq 0$ be a root of a monic polynomial $f(z)$ with integer coefficients. If all the roots of $f(z)$ are in the unit disc $\{z\in\C\mid |z|\leq 1\}$ then $\lambda$ is a root of unity.
\end{teo}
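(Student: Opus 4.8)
The plan is to exploit the fact that a monic integer polynomial of fixed degree is determined by finitely many bounded integer coefficients, so that a suitably constructed \emph{infinite} family of such polynomials is forced to repeat.

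First I would reduce to the case where $f$ is the minimal polynomial of $\lambda$ over $\Q$. Since $\lambda$ is a root of a monic integer polynomial it is an algebraic integer, so its minimal polynomial $p\in\Z[z]$ is monic and divides $f$; its roots $\lambda=\lambda_1,\dots,\lambda_d$ (the Galois conjugates of $\lambda$) are therefore among the roots of $f$ and hence all lie in the closed unit disc. Moreover none of them is $0$: otherwise $z\mid p(z)$, forcing $p(z)=z$ and $\lambda=0$, contrary to hypothesis. Thus it suffices to show that $\lambda_1$ is a root of unity.

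For each $k\in\N$ I would consider the monic polynomial
\[
f_k(z)=\prod_{i=1}^{d}\bigl(z-\lambda_i^{\,k}\bigr).
\]
The key claim --- and the step I expect to be the main obstacle --- is that $f_k\in\Z[z]$ with coefficients bounded \emph{independently of} $k$. Integrality follows because the Galois group of $p$ permutes $\{\lambda_1,\dots,\lambda_d\}$, hence permutes $\{\lambda_1^k,\dots,\lambda_d^k\}$, so the coefficients of $f_k$ are Galois-invariant, i.e.\ rational; being elementary symmetric functions of the algebraic integers $\lambda_i^k$ they are also algebraic integers, and a rational algebraic integer lies in $\Z$. For the bound, the $j$-th coefficient is, up to sign, the elementary symmetric function $e_j(\lambda_1^k,\dots,\lambda_d^k)$, and since $|\lambda_i^k|=|\lambda_i|^k\leq 1$ we get $|e_j|\leq\binom{d}{j}\leq\binom{d}{\lfloor d/2\rfloor}$, a bound depending only on $d$. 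Consequently every $f_k$ lies in the finite set of monic degree-$d$ integer polynomials with coefficients bounded by $\binom{d}{\lfloor d/2\rfloor}$, so by pigeonhole there exist $k<k'$ with $f_k=f_{k'}$.

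Finally I would extract a root of unity from this coincidence. Equality $f_k=f_{k'}$ means the multisets $\{\lambda_i^{k}\}$ and $\{\lambda_i^{k'}\}$ agree, so there is a permutation $\tau$ with $\lambda_i^{k}=\lambda_{\tau(i)}^{k'}$ for all $i$. Raising to $k$-th powers and iterating gives $\lambda_i^{k^m}=\lambda_{\tau^m(i)}^{(k')^m}$; choosing $m$ with $\tau^m=\mathrm{id}$ yields $\lambda_i^{k^m}=\lambda_i^{(k')^m}$. Since $\lambda_i\neq0$ and $(k')^m-k^m>0$, this forces $\lambda_i^{(k')^m-k^m}=1$, so each $\lambda_i$, and in particular $\lambda=\lambda_1$, is a root of unity.
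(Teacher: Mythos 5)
Your proof is correct. Note that the paper does not actually prove this statement: it only cites Greiter's note \cite{Gre} for a proof, so there is no in-paper argument to compare against. What you wrote is the classical pigeonhole proof of Kronecker's theorem, and every step checks out: the reduction to the minimal polynomial (via Gauss's lemma), the integrality and uniform boundedness of the coefficients of $f_k(z)=\prod_i(z-\lambda_i^k)$, the pigeonhole coincidence $f_k=f_{k'}$, and the permutation-iteration argument extracting $\lambda_i^{(k')^m-k^m}=1$. The only remark worth making is that the last step can be shortened: since each $\lambda^k$ is an algebraic integer of degree at most $d$ with all conjugates in the closed unit disc, the set $\{\lambda^k\mid k\in\N\}$ is itself finite, so $\lambda^k=\lambda^{k'}$ for some $k<k'$ and one concludes directly without introducing the permutation $\tau$; this is essentially the route taken in the cited reference. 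Either way, your argument is complete.
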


\begin{teo}\label{conjenteraordfinito}
	Let $t_0\neq 0$ such that the matrix $\varphi(t_0)=\matriz{\Id_{s\times s}&0\\0&\tilde{\varphi}(t_0)}$ as in \eqref{eq11} is conjugate to an integer matrix. Then $\ord(\varphi(t_0))<\infty$.
\end{teo}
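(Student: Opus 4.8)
The plan is to read off the eigenvalues of $\varphi(t_0)$, observe that they all lie in the closed unit disc, and then invoke Kronecker's theorem to force each of them to be a root of unity, which immediately yields finite order.

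First I would determine the spectrum of $\varphi(t_0)$. From the block form in \eqref{eq11}, the identity block $\Id_{s\times s}$ contributes the eigenvalue $1$ with multiplicity $s$, while each rotation block $\matriz{\cos(a_j t_0)&\sin(a_j t_0)\\-\sin(a_j t_0)&\cos(a_j t_0)}$ contributes the conjugate pair $e^{\pm i a_j t_0}$. Hence the eigenvalues of $\varphi(t_0)$ are $1$ together with $e^{\pm i a_j t_0}$ for $j=1,\ldots,n$, and every one of them has modulus $1$; in particular they all belong to the unit disc $\{z\in\C\mid |z|\leq 1\}$ and none of them is zero.

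Next I would use the hypothesis that $\varphi(t_0)$ is conjugate to an integer matrix $M$. Since conjugation preserves the characteristic polynomial, the characteristic polynomial $f(z)$ of $\varphi(t_0)$ coincides with that of $M$, which is monic with integer coefficients. The roots of $f$ are exactly the eigenvalues listed above, all of modulus $1$. Applying Kronecker's theorem to each (nonzero) root, I conclude that every eigenvalue of $\varphi(t_0)$ is a root of unity. In particular, for each $j$ there exists $N_j\in\N$ such that $e^{i a_j t_0 N_j}=1$, that is, $a_j t_0 N_j\in 2\pi\Z$. Finally, $a_j t_0 N_j$ being an integer multiple of $2\pi$ means precisely that the $j$-th rotation block raised to the $N_j$-th power equals the identity. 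Setting $N=\lcm(N_1,\ldots,N_n)$, each rotation block satisfies $(\,\cdot\,)^N=\Id$ while the identity block is unchanged, so $\varphi(t_0)^N=\Id$ and therefore $\ord(\varphi(t_0))\leq N<\infty$.

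The only genuinely nontrivial input is Kronecker's theorem, which does the essential work of upgrading ``eigenvalues on the unit circle $+$ integrality'' to ``roots of unity''; once that is granted the argument is routine. A minor point to be careful about is that the conjugation is a priori only over $\GL(n,\R)$, but since the characteristic polynomial is a conjugation invariant over any field, the identification of $f(z)$ with the monic integer characteristic polynomial of $M$ causes no difficulty.
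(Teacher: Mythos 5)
Your proof is correct and follows essentially the same route as the paper: read off the eigenvalues $1$ and $e^{\pm i a_j t_0}$ from the block form, note they lie in the closed unit disc and that the characteristic polynomial is monic with integer coefficients by the conjugation hypothesis, apply Kronecker's theorem to conclude each eigenvalue is a root of unity, and take the least common multiple of the resulting orders of the rotation blocks. No gaps; this matches the paper's argument.
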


\begin{proof}[Proof:\nopunct]
	It suffices to show that the matrix $\tilde{\varphi}(t_0)$ has finite order. We write \[\tilde{\varphi}(t_0)=\matriz{\theta(a_1 t_0)&&&\\&&\ddots&\\&&&\theta(a_n t_0)}\quad\text{where}\; \theta(t)=\matriz{\cos(t)&\sin(t)\\-\sin(t)&\cos(t)}.\] If there exists $P\in\GL(s+2n,\R)$ such that $P^{-1}\varphi(t_0)P$ is an integer matrix, then the coefficients of the characteristic and minimal polynomials of $\varphi(t_0)$ are integers. It is straightforward to prove that the eigenvalues of $\varphi(t_0)$ are \[1,\ldots,1,\cos(a_1 t_0)\pm i \sin(a_1 t_0),\,\ldots\,,\cos(a_n t_0)\pm i\sin(a_n t_0),\] which all lie in the closed unit disc. 
	
	Therefore, we apply Kronecker's Theorem and we conclude that all the eigenvalues of $\varphi(t_0)$ are roots of unity. In particular, given $j\in\N, 1\leq j\leq n$, there exists $k_j\in\N$ such that ${(\cos(a_j t_0)+ i \sin(a_j t_0))}^{k_j}=1$, which implies $\theta(a_j t_0)^{k_j}=\Id$. Thus, \[\ord(\tilde{\varphi}(t_0))=\lcm(k_1,\,\ldots\,,k_n).\] Hence, $\ord\tilde{\varphi}(t_0)<\infty$.
\end{proof}

\begin{obs}
	Note that $\ord(\theta(t))<\infty$ if and only if $t\in \pi\Q$. Therefore, a necessary condition for $\varphi(t_0)$ to be conjugate to an integer matrix is that $a_i t_0\in \pi\Q$ for every $1\leq i\leq n$. However, we will see later that this is not a sufficient condition. A sufficient condition is that the complex eigenvalues of $\tilde{\varphi}(t_0)$ are all different and the coefficients of its characteristic polynomial are integers. In this case, it is well known that $\tilde{\varphi}(t_0)$ is conjugate over $\C$ to the companion matrix over $\C$ of its characteristic polynomial, which is an integer matrix. However, if two real matrices are conjugate over $\C$ then they are conjugate over $\R$.
\end{obs}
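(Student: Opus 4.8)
The Remark gathers a necessary condition and a sufficient condition for $\varphi(t_0)$ to be conjugate to an integer matrix, so the plan is to establish the two directions separately: the necessary one as a direct corollary of Theorem~\ref{conjenteraordfinito}, and the sufficient one by reducing a similarity over $\C$ to a similarity over $\R$.

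First I would settle the auxiliary equivalence $\ord(\theta(t))<\infty$ if and only if $t\in\pi\Q$. Since $\theta$ is a one-parameter rotation group, $\theta(t)^k=\theta(kt)$, and $\theta(kt)=\Id$ exactly when $kt\in 2\pi\Z$; hence $\theta(t)$ has finite order precisely when $t=2\pi m/k$ for some integers $m,k$, that is, when $t\in\pi\Q$. Granting this, if $\varphi(t_0)$ is conjugate to an integer matrix then Theorem~\ref{conjenteraordfinito} gives $\ord(\varphi(t_0))<\infty$, so every diagonal block $\theta(a_it_0)$ of $\tilde\varphi(t_0)$ has finite order and therefore $a_it_0\in\pi\Q$ for all $i$, which is the asserted necessary condition. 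The claim that this condition fails to be sufficient is a forward reference, so I would simply defer it to the later example.

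For the sufficient direction I would assume that the eigenvalues $\cos(a_jt_0)\pm i\sin(a_jt_0)$ of $\tilde\varphi(t_0)$ are pairwise distinct and that its characteristic polynomial $p$ has integer coefficients. Distinct eigenvalues make $\tilde\varphi(t_0)$ non-derogatory, so its minimal and characteristic polynomials coincide and it is similar over $\C$ to the companion matrix $C_p$, which has integer entries because $p\in\Z[z]$; moreover $\det\tilde\varphi(t_0)=1$ forces $|p(0)|=1$, so that $C_p\in\GL(2n,\Z)$ and the hypothesis of Proposition~\ref{bock} is met. What remains is to upgrade a similarity $\tilde\varphi(t_0)=QC_pQ^{-1}$ with $Q\in\GL(2n,\C)$ to a real one, which is precisely the last sentence of the Remark and the only genuine obstacle.

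The hard part is therefore the descent of conjugacy from $\C$ to $\R$, which I would handle by a pencil argument. Writing $Q=Q_1+iQ_2$ with $Q_1,Q_2$ real and separating real and imaginary parts of $\tilde\varphi(t_0)Q=QC_p$ --- legitimate because $\tilde\varphi(t_0)$ and $C_p$ are real --- gives $\tilde\varphi(t_0)Q_r=Q_rC_p$ for $r=1,2$, and hence $\tilde\varphi(t_0)(Q_1+\lambda Q_2)=(Q_1+\lambda Q_2)C_p$ for every $\lambda\in\R$. The polynomial $\lambda\mapsto\det(Q_1+\lambda Q_2)$ is not identically zero, because its value at $\lambda=i$ equals $\det Q\neq 0$; since a nonzero polynomial has only finitely many roots, some real $\lambda_0$ makes $Q_1+\lambda_0Q_2$ invertible, and this real matrix conjugates $\tilde\varphi(t_0)$ to $C_p$. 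Together with the companion-matrix step and the determinant computation, this exhibits an element of $\GL(2n,\Z)$ conjugate to $\tilde\varphi(t_0)$, as required by Proposition~\ref{bock}.
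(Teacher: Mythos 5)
Your proposal is correct and follows essentially the same route the Remark itself sketches: the necessary condition via Theorem \ref{conjenteraordfinito} applied blockwise, and the sufficient condition via the companion matrix of the characteristic polynomial together with the standard descent of similarity from $\C$ to $\R$ (which you rightly identify as the only nontrivial step and prove by the usual pencil argument on $\det(Q_1+\lambda Q_2)$). The paper leaves these facts as "well known"; your write-up supplies the details faithfully and without gaps.
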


\medskip

Once we know how to construct lattices in flat almost abelian Lie groups, we need to compute the holonomy group of the corresponding solvmanifolds, what we do in the next result. In particular, we show that the holonomy group of a flat almost abelian solvmanifold is cyclic.

\begin{teo}\label{holonomia_lcm}
	Let $G=\R\ltimes_\varphi\R^{s+2n}$ be a flat almost abelian Lie group and let $t_0\neq 0$ such that the matrix $\varphi(t_0)=\matriz{\Id_{s\times s}&0\\0&\tilde{\varphi}(t_0)}$ is conjugate to an integer
		 matrix through a matrix $P\in\GL(s+2n,\R)$, where \[\tilde{\varphi}(t_0)=\matriz{\theta(a_1 t_0)&&\\&\ddots&\\&&\theta(a_n t_0)}\quad\text{with}\;\theta(t)=\matriz{\cos(t)&\sin(t)\\-\sin(t)&\cos(t)}.\] If $\varGamma$ is the lattice given by $\varGamma=t_0\Z\ltimes_\varphi P\Z^{s+2n}$ and \[d=\lcm\left(\ord(\theta(a_1 t_0)),\,\ldots\,,\ord(\theta(a_n t_0))\right),\]   then\[\hol(\varGamma\backslash G)=\begin{cases}
	\Z_d&\text{if}\;d>1,\\
	\{e\}&\text{if}\;d=1.
	\end{cases}\]
\end{teo}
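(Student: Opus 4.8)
The plan is to reduce everything to the identification $\hol(\varGamma\backslash G)\cong r(\varGamma)$ recorded in \eqref{eqhol}, so that the whole statement becomes a computation of the image of $\varGamma$ under the rotational projection $r$, once $\varGamma$ is realized as a Bieberbach subgroup of $\Iso(\R^{1+s+2n})$.

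First I would produce an explicit isometry $\psi\colon G\to\R^{1+s+2n}$. Writing elements of $G=\R\ltimes_\varphi\R^{s+2n}$ as pairs $(t,v)$, left translation reads $L_{(t_1,v_1)}(t,v)=(t_1+t,\,v_1+\varphi(t_1)v)$, so $dL_{(t,v)}|_0(\dot t,\dot v)=(\dot t,\varphi(t)\dot v)$. Because $\varphi(t)=\mathrm{diag}(\Id_s,\tilde\varphi(t))$ is orthogonal for every $t$ (each block $\theta(a_it)$ is a rotation), evaluating the left-invariant metric in the coordinates $(t,v)$ shows it coincides with the standard Euclidean inner product; hence the coordinate map $\psi(t,v)=(t,v)$ is the desired isometry. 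Consequently $\tau_{(t_1,v_1)}=\psi\circ L_{(t_1,v_1)}\circ\psi^{-1}$ is the affine isometry with translation part $(t_1,v_1)$ and linear part $\mathrm{diag}(1,\varphi(t_1))$, so that $r\big(\tau_{(t_1,v_1)}\big)=\mathrm{diag}(1,\varphi(t_1))$.

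Next I would restrict to $\varGamma=t_0\Z\ltimes_\varphi P\Z^{s+2n}$. A general element has the form $(kt_0,Pw)$ with $k\in\Z$, $w\in\Z^{s+2n}$, and its linear part is $\mathrm{diag}(1,\varphi(kt_0))=\mathrm{diag}(1,\varphi(t_0))^k$. Therefore $r(\varGamma)$ is the cyclic group generated by $\mathrm{diag}(1,\varphi(t_0))$, whose order equals $\ord(\varphi(t_0))=\ord(\tilde\varphi(t_0))$, the extra blocks $1$ and $\Id_s$ being trivial. Since $\tilde\varphi(t_0)$ is block-diagonal with blocks $\theta(a_it_0)$, its order is $\lcm(\ord(\theta(a_1t_0)),\ldots,\ord(\theta(a_nt_0)))=d$, which is finite by Theorem \ref{conjenteraordfinito}. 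Combining this with $\hol(\varGamma\backslash G)\cong r(\varGamma)$ gives $\hol(\varGamma\backslash G)\cong\Z_d$ when $d>1$, and the trivial group when $d=1$ (in which case $\tilde\varphi(t_0)=\Id$ and every linear part is the identity).

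The computations are routine; the only points demanding care are verifying that the coordinate chart is genuinely an isometry — which rests on the orthogonality of $\varphi(t)$ guaranteed by Milnor's flatness structure — and confirming that $\ker(r|_\varGamma)$ is exactly the pure-translation subgroup $\Lambda$, so that $r(\varGamma)\cong\varGamma/\Lambda$ is legitimately the holonomy group. The latter holds because $r(\gamma)=\Id$ forces the linear part of $\tau_\gamma$ to be trivial, i.e. $\gamma$ acts as a pure translation, which is precisely the identification recorded before \eqref{eqhol} via Bieberbach's First Theorem. I do not foresee a genuine obstacle: the substantive work is organizational, namely arranging the Euclidean realization so that the rotational parts are literally the powers of $\varphi(t_0)$.
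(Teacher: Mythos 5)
Your argument is correct, and it reaches the conclusion by a route whose mechanics differ from the paper's, although both ultimately rest on the identification $\hol(\varGamma\backslash G)\cong\varGamma/\Lambda\cong r(\varGamma)$ of \eqref{eqhol}. You make the Euclidean realization explicit: you verify that the coordinate chart $(t,v)\mapsto(t,v)$ is an isometry (this does hold, since $dL_{g^{-1}}|_g(\dot s,\dot w)=(\dot s,\varphi(-t)\dot w)$ and $\varphi(t)$ is orthogonal), read off that $\tau_{(kt_0,Pw)}$ has linear part $\mathrm{diag}(1,\varphi(t_0))^k$, and conclude that $r(\varGamma)$ is cyclic of order $\ord(\tilde\varphi(t_0))=d$; the identification of $\ker(r|_\varGamma)$ with $\Lambda$ is then Bieberbach's First Theorem. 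The paper instead never writes down the isometry: it works inside the abstract group $\varGamma$, proving that $\Sigma=dt_0\Z\ltimes_\varphi P\Z^{s+2n}$ equals the maximal normal abelian subgroup $\Lambda$ --- the inclusion $\Sigma\subset\Lambda$ by maximality, and $\Lambda\subset\Sigma$ by showing that an element $(t_0k,Pm)$ commuting with all of $\Sigma$ forces $\varphi(t_0)^k=\Id$, hence $d\mid k$ --- and then exhibits the epimorphism $(t_0k,Pm)\mapsto[k]_d$ with kernel $\Lambda$. The two kernel computations (your ``linear part trivial iff $d\mid k$'' and the paper's commutation argument) are the same arithmetic in different clothing; your version buys a cleaner conceptual picture of the holonomy as the visible group of rotational parts, at the cost of the metric verification, while the paper's stays purely algebraic. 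A minor point in your favour: for $d=1$ you only claim that every linear part is the identity, which is exactly what is needed, whereas the paper's phrase ``hence $G$ is abelian'' is a slight overstatement ($\varphi(t_0)=\Id$ does not force $\varphi\equiv\Id$), though its conclusion that $\varGamma\backslash G$ is a torus is of course correct.
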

\begin{proof}[Proof:\nopunct]
	If $d=1$, then $\varphi(t_0)=\Id$, hence $G$ is abelian and thus the solvmanifold $\varGamma\backslash G$ is a torus, which has trivial holonomy group. If $d>1$, we claim that the maximal normal abelian subgroup is $\Sigma=d t_0 \Z\ltimes_\varphi P\Z^{s+2n}$. Indeed, it is clear that $dt_0\Z\ltimes_\varphi P\Z^{s+2n}$ is normal in $\varGamma$ and abelian. Furthermore, we show next that $\Sigma$ is maximal: let $\Lambda$ be the maximal normal abelian subgroup of $\varGamma$. According to Proposition \ref{maximal}, $\Sigma\subset\Lambda$, and we will prove that $\Lambda\subset\Sigma$. Let $\gamma_{\lambda}=(t_0 k,Pm) \in \Lambda$ and $\gamma_\sigma=(d t_0 \l, Pr)\in\Sigma$.  It suffices to show that $k=d k_0$ for some $k_0\in\Z$. Due to the fact that $\gamma_\lambda$ and $\gamma_\sigma$ commute, we deduce, looking at the last coordinates, that
	\[Pm+\varphi(t_0)^k Pr=Pr+Pm,\] which implies $\varphi(t_0)^k Pr=Pr$. Then $P^{-1}\varphi(t_0)^k P$ is a matrix that fixes $r$ for all $r\in\Z^{s+2n}$. If we let $r$ vary through the vectors of the canonical basis, it follows that $P^{-1}\varphi(t_0)^kP=\Id$ and thus $\varphi(t_0)^k=\Id$. Now, $d=\ord(\tilde{\varphi}(t_0))$, so $k=d k_0$ for some $k_0\in\Z$. Therefore $\Lambda\subset \Sigma$ and this concludes the proof of the statement.
	
	Finally, let $\phi:\varGamma\to\Z_d$ be the epimorphism defined by \[\phi(t_0 k,Pm)=[k]_d.\]
	It is clear that $(t_0 k,Pm)\in\Ker(\phi)\iff k=d k_0\iff (t_0 k,Pm)\in\Lambda$. The formula $\hol(\varGamma\backslash G)\cong \varGamma/\Lambda$ implies $\hol(\varGamma\backslash G)\cong \Z_d$. 
\end{proof}
\begin{obs}
	Note that if $\varphi(t_0)$ is conjugate to an integer matrix $E$, then $\det(E)=1$, so $E$ is invertible over $\Z$.
\end{obs}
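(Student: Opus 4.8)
The plan is to read off $\det(\varphi(t_0))$ directly from the block structure and then transport it to $E$ via conjugation. First I would note that $\varphi(t_0)$ is block diagonal, built from the identity block $\Id_{s\times s}$ together with the $2\times 2$ rotation blocks $\theta(a_j t_0)$ comprising $\tilde{\varphi}(t_0)$. Since
\[
\det\theta(t)=\cos^2(t)+\sin^2(t)=1
\]
for every $t$, and the identity block contributes a factor $1$, multiplicativity of the determinant across diagonal blocks yields $\det(\varphi(t_0))=1$.

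Next, writing $E=P^{-1}\varphi(t_0)P$ with $P\in\GL(s+2n,\R)$ and using the invariance of the determinant under conjugation, I obtain $\det(E)=\det(\varphi(t_0))=1$.

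Finally I would invoke the classical criterion that an integer matrix is invertible over $\Z$ (in the sense that its inverse again has integer entries) precisely when its determinant equals $\pm 1$: from the adjugate formula $E^{-1}=\det(E)^{-1}\operatorname{adj}(E)$, and since $\operatorname{adj}(E)$ has integer entries whenever $E$ does, the equality $\det(E)=1$ forces $E^{-1}$ to be an integer matrix as well. Hence $E$ is invertible over $\Z$. There is essentially no obstacle in this argument; the only step worth isolating is the determinant-$\pm 1$ characterization of integral invertibility, which is standard, while the vanishing of everything except the unit determinant hinges solely on the orthogonality of the rotation blocks.
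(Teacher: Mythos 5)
Your proof is correct and is precisely the justification the paper intends: the remark is stated without proof, and the standard argument is exactly yours --- each rotation block $\theta(a_j t_0)$ and the identity block have determinant $1$, conjugation preserves the determinant, and an integer matrix with determinant $\pm 1$ has an integer inverse by the adjugate formula. No gaps.
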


\bigskip

Finally, to prove the converse of Theorem \ref{holabeliana}, we consider the case when $(\g,\la\cdot,\cdot\ra)$ is a flat Lie algebra with $\dim\g=2n+1$ and $\dim[\g,\g]=2n$ for some $n\in\N$. Hence, $\dim\b=1$ so $\g$ is almost abelian and has trivial center, or equivalently, $\ad_x:\R^{2n}\to\R^{2n}$ is invertible. We can write  \[[\ad_x]_{\mathcal{B}}=\matriz{0&a_1&&\\-a_1&0&&&\\&&\ddots&&\\&&&0&a_n\\&&&-a_n&0},\] for some orthonormal basis $\mathcal{B}$ of $\R^{2n}$ and $a_1\geq\cdots\geq a_n>0$. The corresponding simply connected Lie group is $G=\R\ltimes_\varphi \R^{2n}$ where \begin{equation}\label{eq20}
\varphi(t)=\matriz{\cos(a_1 t)&\sin(a_1 t)&&&\\-\sin(a_1 t)&\cos(a_1 t)&&&\\&&\ddots&&\\&&&\cos(a_n t)&\sin(a_n t)\\&&&-\sin(a_n t)&\cos(a_n t)}.
\end{equation} 

\bigskip

\begin{teo}\label{holpotprimos} Let $p$ be an odd prime and let $k\in\N$. Then there exist flat solv\-manifolds of dimension $(p-1)p^{k-1}+1$ with holonomy group $\Z_{p^k}$ and $\Z_{2p^k}$, respectively. If $p=2$ the assertion is valid for $k\geq 2$.
\end{teo}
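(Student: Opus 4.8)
The plan is to realize both groups, in the single dimension $(p-1)p^{k-1}+1$, as holonomy groups of flat almost abelian solvmanifolds, by a uniform construction driven by cyclotomic polynomials. The starting observation is the degree coincidence $\deg\Phi_{p^k}=\deg\Phi_{2p^k}=(p-1)p^{k-1}$ valid for odd $p$, where $\Phi_N$ denotes the $N$-th cyclotomic polynomial. Accordingly I set $N=p^k$ in the first case and $N=2p^k$ in the second, put $2n=\deg\Phi_N=(p-1)p^{k-1}$ (even because $p-1$ is even), and work with the flat almost abelian Lie algebra $\g=\R x\ltimes_{\ad_x}\R^{2n}$ of trivial center, whose associated group $G=\R\ltimes_\varphi\R^{2n}$ has $\varphi(t)$ of the form \eqref{eq20}. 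The ambient dimension is then $2n+1=(p-1)p^{k-1}+1$, as required.

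Next I would choose the rotation data so that the eigenvalues of $\varphi(t_0)$ are exactly the primitive $N$-th roots of unity. Since $N\notin\{1,2\}$, none of these roots equals $\pm1$, so they occur in $n$ conjugate pairs $e^{\pm 2\pi i c_j/N}$ with $1\le c_j<N/2$ and $\gcd(c_j,N)=1$. Taking $t_0=2\pi$ and $a_j=c_j/N$ (relabelled so that $a_1\geq\cdots\geq a_n>0$), the matrix $\varphi(t_0)$ of \eqref{eq20} has these $2n$ distinct roots as eigenvalues, so its characteristic polynomial is precisely $\Phi_N$, which is monic with integer coefficients.

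Conjugacy to an integer matrix is then automatic: by the remark following Theorem \ref{conjenteraordfinito}, distinct eigenvalues together with an integer characteristic polynomial imply that $\varphi(t_0)$ is conjugate over $\C$, hence over $\R$, to the companion matrix $E$ of $\Phi_N$, an integer matrix with $\det E=\det\varphi(t_0)=1$, so invertible over $\Z$. Proposition \ref{bock} now furnishes the lattice $\varGamma=t_0\Z\ltimes_\varphi P\Z^{2n}$ with $P^{-1}\varphi(t_0)P=E$, giving a flat solvmanifold $\varGamma\backslash G$ of dimension $(p-1)p^{k-1}+1$.

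It remains to compute the holonomy through Theorem \ref{holonomia_lcm}. Each block $\theta(a_j t_0)=\theta(2\pi c_j/N)$ is a rotation whose order is $N/\gcd(c_j,N)=N$, since $\gcd(c_j,N)=1$; hence $d=\lcm(N,\ldots,N)=N$ and $\hol(\varGamma\backslash G)\cong\Z_N$, which is $\Z_{p^k}$ or $\Z_{2p^k}$ according to the choice of $N$. The one genuinely substantive point—rather than any single hard estimate—is the coordinated choice that makes everything fit at once: the order condition of Theorem \ref{holonomia_lcm} forces the rotation blocks to be indexed by the primitive $N$-th roots of unity, conjugacy to an integer matrix is then free because their characteristic polynomial is exactly $\Phi_N$, and the totient identity $\deg\Phi_{p^k}=\deg\Phi_{2p^k}$ for odd $p$ is precisely what permits $\Z_{p^k}$ and $\Z_{2p^k}$ to be realized in the same dimension. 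For $p=2$ the identical construction with $N=2^k$ yields $\Z_{2^k}$ as soon as $k\geq2$ (so that $\deg\Phi_{2^k}=2^{k-1}$ is even), while the companion group $\Z_{2\cdot 2^k}=\Z_{2^{k+1}}$ is subsumed by this same statement with $k$ replaced by $k+1$.
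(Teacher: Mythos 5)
Your proposal is correct and follows essentially the same route as the paper: choose the rotation angles so that the eigenvalues of $\varphi(t_0)$ are exactly the primitive $N$-th roots of unity for $N=p^k$ or $N=2p^k$, so the characteristic polynomial is $\Phi_N$, conjugate to its companion matrix, and apply Proposition \ref{bock} and Theorem \ref{holonomia_lcm}. The only difference is cosmetic (you normalize $t_0=2\pi$ with rational $a_j=c_j/N$ and treat the two cases uniformly in $N$, whereas the paper takes $t_0=2\pi/p^k$ or $\pi/p^k$ with integer $a_\ell$), and your reading of the $p=2$ case matches what the paper actually proves.
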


\begin{proof}[Proof:\nopunct]
	We will start by constructing a flat solvmanifold of dimension $2^{k-1}+1$ with holonomy group $\Z_{2^k}$, for $k\geq 2$. Let $n=2^{k-2}$, we choose $t_0=\frac{\pi}{2^{k-1}}$ and $a_\l=2\l-1$ for $1\leq \l\leq n$, so that \[a_1 t_0=\frac{\pi}{2^{k-1}},\quad \ldots\quad,\quad a_n t_0=\frac{(2^{k-1}-1)\pi}{2^{k-1}}.\]
	The eigenvalues of $\varphi(t_0)$ as in \eqref{eq20} for this choice of $t_0$ and $a_\l$ are \[\exp\left({\pm \frac{j\pi i}{2^{k-1}}}\right),\;\; 1\leq j\leq 2^{k-1}-1,\;\; j\,\text{odd}.\]
	Note that \[\exp\left(-\frac{j\pi i}{2^{k-1}}\right)=\exp\big((2^k-j)\frac{2\pi i}{2^k}\big),\] and if $1\leq j\leq 2^{k-1}-1$ then $2^{k-1}+1\leq 2^k-j\leq 2^k-1$ . In conclusion, the set of eigenvalues of $\varphi(t_0)$ is $\{\exp(\frac{2\pi i}{2^k}j)\mid 1\leq j\leq 2^k, \, \gcd(j,2^k)=1\}$ which is exactly the set of primitive $2^k$-th roots of unity. Therefore, the characteristic polynomial of $\varphi(t_0)$ is the $2^k$-th cyclotomic polynomial $\Phi_{2^k}$ which has integer coefficients (it is well known that $\Phi_n$ has integer coefficients for all $n\in\N$). Since all the eigenvalues of $\varphi(t_0)$ are different, $\varphi(t_0)$ is conjugate to the companion matrix of $\Phi_{2^k}$ which has integer coefficients. Then, by Proposition \ref{bock}, the associated Lie group $G=\R\ltimes_{\varphi}\R^{2^{k-1}}$ (for this choice of $t_0$ and $a_\l$) admits a lattice \[\varGamma_{2^{k-1}}=\frac{\pi}{2^{k-1}}\Z\ltimes_{\varphi}P\Z^{2^{k-1}}, \quad\text{for some matrix}\;P\in\GL(2^{k-1},\R).\]
	It follows from Theorem \ref{holonomia_lcm} that the holonomy group of the solvmanifold $\varGamma_{2^{k-1}}\backslash G$ is $\Z_d$ where \[d=\lcm\left(\ord\left(\theta\left(\frac{\pi}{2^{k-1}}\right)\right),\; \ord\left(\theta\left(\frac{3\pi}{2^{k-1}}\right)\right),\,\ldots\,,\;\ord\left(\theta\left(\frac{(2^{k-1}-1)\pi}{2^{k-1}}\right)\right)\right)=2^k.\]
	
	\bigskip
	
	We will continue by exhibiting a solvmanifold of dimension $(p-1)p^{k-1}+1$ with holonomy group $\Z_{p^k}$ for $p$ an odd prime and $k\geq 1$. Let $n=\frac{(p-1)p^{k-1}}{2}$ and $t_0=\frac{2\pi}{p^k}$. To choose the parameters $\{a_\l\}_{\l=1}^n$, note that there are exactly $n$ integers $\l$ such that $1\leq \l\leq \frac{p^k-1}{2}$ and $\gcd(\l,p^k)=1$ since the numbers that are not relatively prime to $p^k$ in this range are $pm$ for $1\leq m\leq \frac{p^{k-1}-1}{2}$. We choose $\{a_\l\}_{\l=1}^n$ to be these $n$ numbers and thus
	the eigenvalues of $\varphi(t_0)$ are \[\exp\left(\pm \frac{2\pi i}{p^k}\l\right),\quad 1\leq \l\leq \frac{p^{k}-1}{2},\; \gcd(\l,p^k)=1.\] Again, note that $\exp(-\frac{2\pi i}{p^k}\l)=\exp(\frac{2\pi i p^k-2\pi i \l}{p^k})=\exp((p^k-\l)\frac{2\pi i}{p^k})$. Furthermore, if $\gcd(\l,p^k)=1$ then  $\gcd(p^k-\l,p^k)=1$ and if $1\leq \l \leq \frac{p^k-1}{2}$ then $\frac{p^k+1}{2}\leq p^k-\l\leq p^k-1$. Therefore, the eigenvalues of $\varphi(t_0)$ are exactly the primitive $p^k$-th roots of unity. Thus, the characteristic polynomial of $\varphi(t_0)$ is the $p^k$-th cyclotomic polynomial $\Phi_{p^k}$ and analogously to the previous case,  
	the Lie group $G=\R\ltimes_\varphi \R^{(p-1)p^{k-1}}$ admits a lattice $\varGamma_{p^k}$ of the form \[\varGamma_{p^k}=\frac{2\pi}{p^k}\Z\ltimes_\varphi P\Z^{(p-1)p^{k-1}},\quad\text{for some matrix}\;P\in\GL((p-1)p^{k-1},\R).\]
	It follows from Theorem \ref{holonomia_lcm} that the holonomy group of the solvmanifold $\varGamma_{p^k}\backslash G$ is $\Z_d$ where $d=\lcm(\ord(\theta(\frac{2\pi}{p^k}j)))=p^k$.
	
	\medskip
	
	Finally, we will exhibit a solvmanifold of dimension $(p-1)p^{k-1}+1$ with holonomy group $\Z_{2p^{k}}$, for $k\geq 1$. Let $n=\frac{(p-1)p^{k-1}}{2}$ and $t_0=\frac{\pi}{p^k}$. To choose the parameters $\{a_\l\}_{\l=1}^n$, note that there are $n$ odd integers $\l$ such that $1\leq \l\leq p^k-2$ and $\gcd(\l,2p^k)=1$ since if $\l$ is odd and not coprime with $2p^k$ then $\l=mp$ for $1\leq m\leq p^{k-1}-2$. We choose $\{a_\l\}_{\l=1}^n$ to be these $n$ odd integers and thus the eigenvalues of $\varphi(t_0)$ are

\[\exp\left(\pm \frac{\pi i}{p^k}\l\right),\quad 1\leq \l\leq p^k-2,\; \gcd(\l,2p^k)=1,\] or equivalently 

\[\exp\left(\frac{2\pi i}{2p^k}j\right),\quad 1\leq j\leq 2p^{k}-1,\; \gcd(j,2p^k)=1,\]  which is the set of primitive $(2p^k)$-th roots of unity. Thus, the characteristic polynomial of $\varphi(t_0)$ is the $(2p^k)$-th cyclotomic polynomial $\Phi_{2p^k}$. Similarly to the previous cases, by Proposition \ref{bock}, the Lie group $G=\R\ltimes_\varphi\R^{(p-1)p^{k-1}}$  admits a lattice $\varGamma_{2p^k}$ of the form \[\varGamma_{2p^k}=\frac{\pi}{p^k}\Z\ltimes_\varphi P\Z^{(p-1)p^{k-1}},\quad\text{for some matrix}\; P\in\GL((p-1)p^{k-1},\R).\]
	It follows from Theorem \ref{holonomia_lcm} that the holonomy group of the solvmanifold $\varGamma_{2p^k}\backslash G$ is $\Z_d$ where $d=\lcm(\ord(\theta(\frac{j\pi}{p^k})))=2p^k$.
\end{proof}

\medskip

Finally, we prove the converse of Theorem \ref{holabeliana}.

\begin{teo}\label{holabelianofinito}
	Let $A$ be a finite abelian group. Then there exists a flat solvmanifold with holonomy group $A$.
\end{teo}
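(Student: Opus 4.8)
The plan is to reduce the general finite abelian group $A$ to its cyclic prime-power factors, realize each factor by the almost abelian construction of Theorem \ref{holpotprimos}, and then assemble the pieces by taking a product of the corresponding solvmanifolds.

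First I would invoke the fundamental theorem of finite abelian groups to write
\[
A \cong \Z_{p_1^{k_1}} \times \cdots \times \Z_{p_r^{k_r}},
\]
a direct product of cyclic groups of prime-power order (if $A$ is trivial, a flat torus already does the job). By Theorem \ref{holpotprimos}, for every odd prime $p$ and every $k\geq 1$, and for $p=2$ with $k\geq 2$, there is a flat almost abelian solvmanifold with holonomy group $\Z_{p^k}$. The only prime power not covered is $2^1=2$, which I would treat by hand: take $\g=\R x\ltimes_{\ad_x}\R^2$ with $[\ad_x]=\matriz{0&1\\-1&0}$, so that the associated group is $G=\R\ltimes_\varphi\R^2$ with $\varphi(t)=\theta(t)$. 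Choosing $t_0=\pi$ gives $\varphi(\pi)=\theta(\pi)=-\Id_2$, which is already an integer matrix invertible over $\Z$; hence by Proposition \ref{bock} the subgroup $\varGamma=\pi\Z\ltimes_\varphi\Z^2$ is a lattice, and by Theorem \ref{holonomia_lcm} the solvmanifold $\varGamma\backslash G$ has holonomy group $\Z_{\ord(\theta(\pi))}=\Z_2$. Thus every cyclic factor $\Z_{p_i^{k_i}}$ is realized by some flat solvmanifold $M_i=\varGamma_i\backslash G_i$.

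The remaining task is to combine these, and here I would use that the class of flat solvmanifolds is closed under products. If $G_1,G_2$ are flat Lie groups with Milnor decompositions $\g_j=\b_j\oplus\u_j$, then $\g_1\oplus\g_2=(\b_1\oplus\b_2)\oplus(\u_1\oplus\u_2)$ is again of Milnor type (Theorem \ref{algebraplana}), so $G_1\times G_2$ is a flat Lie group; moreover $\varGamma_1\times\varGamma_2$ is a lattice in the simply connected solvable group $G_1\times G_2$, and the quotient is precisely $M_1\times M_2$ with the product metric. Iterating, $M:=M_1\times\cdots\times M_r$ is a flat solvmanifold. To compute its holonomy I would pass to the Bieberbach picture of \S3: realizing each $\varGamma_i$ inside $\Iso(\R^{n_i})$, the product $\varGamma=\prod_i\varGamma_i$ sits in $\Iso(\R^n)$ with $n=\sum_i n_i$, and an element of $\varGamma$ is a pure translation if and only if each of its components is. Hence the translation lattice of $\varGamma$ is $\Lambda=\prod_i\Lambda_i$, and by Bieberbach's first theorem (Theorem \ref{maximal}) together with \eqref{eqhol},
\[
\hol(M)\cong\varGamma/\Lambda\cong\prod_{i=1}^r\varGamma_i/\Lambda_i\cong\prod_{i=1}^r\Z_{p_i^{k_i}}\cong A.
\]

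The routine structural verifications (that the product metric is flat and left invariant, and that $\prod_i\varGamma_i$ is discrete and cocompact, hence torsionfree Bieberbach) are immediate. I expect the only genuine subtlety to be the factor $\Z_2$, which falls outside the range of Theorem \ref{holpotprimos} and must be supplied separately by the rotation-by-$\pi$ construction above; once that single gap is filled, the product construction delivers every finite abelian group.
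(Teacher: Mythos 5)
Your proposal is correct and follows essentially the same route as the paper: decompose $A$ into cyclic factors of prime-power order, realize each factor by the almost abelian construction of Theorem \ref{holpotprimos}, and take the Riemannian product of the resulting solvmanifolds, using that the holonomy of a product is the product of the holonomies. One point in your favor: you correctly noticed that Theorem \ref{holpotprimos} does not cover $\Z_2$ (it requires $k\geq 2$ when $p=2$), a case the paper's proof passes over in silence, and your dicosm construction ($E(2)$ with $t_0=\pi$, giving $\varphi(\pi)=-\Id_2$ of order $2$) fills that gap; your explicit Bieberbach-picture computation of the product holonomy via $\Lambda=\prod_i\Lambda_i$ is likewise a sound substitute for the paper's one-line appeal to the product formula for holonomy groups.
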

\begin{proof}[Proof:\nopunct]
	We may decompose \[A\cong\Z_{p_1^{k_1}}\oplus\cdots\oplus\Z_{p_m^{k_m}},\] according to the structure theorem for finitely generated abelian groups, where $p_1,\,\ldots\,,p_m$ are primes (not necessarily distinct) and $k_1,\,\ldots\,,k_m\in\N$. In Theorem \ref{holpotprimos} we have constructed a solvmanifold $\varGamma_i\backslash G_i$ with holonomy group $\Z_{p_i^{k_i}}$ for all $1\leq i\leq m$. Let $G$ be the Lie group $G=G_1\times\cdots\times G_m$ and $\varGamma$ the lattice of $G$ given by the product of the lattices $\varGamma_i$, $\varGamma=\varGamma_1\times\cdots\times\varGamma_m.$ We consider the solvmanifold \[\varGamma\backslash G\cong \varGamma_1\backslash G_1\times\cdots\times\varGamma_m\backslash G_m\] equipped with the product metric, which is flat. As the holonomy group of a Riemannian product is the product of the holonomy groups, it follows that \[\hol(\varGamma\backslash G)=\hol(\varGamma_1\backslash G_1)\times\cdots\times\hol(\varGamma_m\backslash G_m)\cong \Z_{p_1^{k_1}}\oplus\cdots\oplus\Z_{p_m^{k_m}}\cong A.\]
\end{proof}

\begin{coro}\label{kähler}
	Given a finite abelian group $A$ there exists a Kähler flat solv\- manifold with holonomy group $A$. 
\end{coro}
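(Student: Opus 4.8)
The plan is to equip the flat solvmanifold $M=\varGamma\backslash G$ produced in Theorem \ref{holabelianofinito} with a parallel complex structure compatible with its flat metric, after possibly multiplying by a circle to correct the parity of the dimension. The guiding principle is the following standard fact: a flat Riemannian manifold $\R^N/\varGamma$ is K\"ahler precisely when its holonomy representation $r(\varGamma)\subset\ort(N)$ commutes with some orthogonal complex structure $J_0\in\ort(N)$, i.e.\ an orthogonal matrix with $J_0^2=-\Id$. Indeed, such a $J_0$ is a constant (hence parallel) tensor on $\R^N$ which is automatically invariant under translations and, since it commutes with every $r(\gamma)$, descends to a parallel almost complex structure $J$ on the quotient. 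Being parallel, $J$ has vanishing Nijenhuis tensor and is therefore integrable, while its fundamental form $\omega(\cdot,\cdot)=\la J\cdot,\cdot\ra$ is parallel and hence closed; thus $(M,\la\cdot,\cdot\ra,J)$ is K\"ahler.

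\textbf{Key steps.} First I would make explicit the holonomy representation of the solvmanifold of Theorem \ref{holabelianofinito}. Writing $G=G_1\times\cdots\times G_m$ with $G_i=\R\ltimes_\varphi\R^{2n_i}$ as in Theorem \ref{holpotprimos}, the Euclidean model $\R^N$ ($N=\dim G$) decomposes orthogonally as $\R^N=\big(\bigoplus_{i}\R x_i\big)\oplus\big(\bigoplus_i\R^{2n_i}\big)$, where the first summand is the $m$-dimensional subspace on which the holonomy group acts trivially, and a generator of the $i$-th cyclic factor acts on $\R^{2n_i}$ by the block rotation $\tilde\varphi_i(t_0)$ whose eigenvalues are primitive roots of unity, so it has no nonzero fixed vector. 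Second, I would assemble $J_0$ blockwise: on each rotating block $\R^{2n_i}$ the standard complex structure $J^{\mathrm{std}}$, built from $2\times 2$ blocks $\left(\begin{smallmatrix}0&1\\-1&0\end{smallmatrix}\right)$, commutes with every planar rotation $\theta(\cdot)$ and hence with the holonomy; on the fixed subspace $\bigoplus_i\R x_i$ the holonomy acts trivially, so \emph{any} orthogonal complex structure there commutes with it, provided this subspace is even-dimensional. If $m$ is odd, I would first replace $G$ by $G\times\R$ and $\varGamma$ by $\varGamma\times\Z$; this enlarges $M$ to $M\times S^1$, which is again a flat solvmanifold, leaves the holonomy group unchanged, and renders the fixed subspace even-dimensional. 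Choosing any complex structure on the fixed subspace and combining it with $J^{\mathrm{std}}$ yields an orthogonal $J_0$ commuting with the entire holonomy group.

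\textbf{Conclusion and main obstacle.} Applying the principle of the first paragraph to this $J_0$ shows that the resulting flat solvmanifold, whose holonomy group is still $A$, is K\"ahler, which proves the corollary. The essential point to watch is the parity of the fixed subspace of the holonomy representation: each cyclic building block of Theorem \ref{holpotprimos} is odd-dimensional, contributing a one-dimensional fixed direction $\R x_i$ arising from $\b$, and these fixed directions must be paired by the complex structure. This is exactly what may force the harmless introduction of one extra circle factor when $m$ is odd; verifying that the parallel orthogonal almost complex structure so obtained is integrable and K\"ahler is routine and presents no real difficulty.
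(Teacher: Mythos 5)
Your proposal is correct, but it takes a genuinely different route from the paper. The paper's proof is a two-line reduction: it invokes the result of \cite{BDF} that \emph{every} flat solvmanifold of even dimension admits an invariant K\"ahler structure compatible with the metric, and when the dimension of the solvmanifold from Theorem \ref{holabelianofinito} is odd it multiplies by a circle, exactly as you do (note that the total dimension $N=m+2\sum_i n_i$ is even precisely when your fixed subspace $\bigoplus_i\R x_i$ is even-dimensional, so the two parity corrections coincide). You instead bypass the citation and construct the K\"ahler structure by hand: you identify the holonomy representation of the product lattice as acting by the block rotations $\tilde\varphi_i(t_0)$ on the $[\g_i,\g_i]$-factors and trivially on the $\b_i$-directions, observe that the standard complex structure built from $\left(\begin{smallmatrix}0&1\\-1&0\end{smallmatrix}\right)$-blocks commutes with all planar rotations, and pair off the fixed directions. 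All the ingredients check out: a constant orthogonal $J_0$ commuting with $r(\varGamma)$ is invariant under the affine action of $\varGamma$, hence descends to a parallel almost complex structure, which is integrable and has closed fundamental form. What your argument buys is a self-contained proof that in effect reproves the special case of the \cite{BDF} result needed here, and it makes the complex structure explicit (indeed left-invariant); what the paper's argument buys is brevity and generality, since the cited result applies to arbitrary even-dimensional flat solvmanifolds rather than only to the specific block-rotation holonomy representations produced by Theorem \ref{holpotprimos}.
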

\begin{proof}[Proof:\nopunct]
	Let $A$ be a finite abelian group. By Theorem \ref{holabelianofinito} there exists a flat solvmanifold $\varGamma\backslash G$ of dimension $n$ with holonomy group $A$. In \cite{BDF}, it is proved that every flat solvmanifold of even dimension admits an invariant complex structure compatible with the metric which is Kähler. Therefore, if $n$ is even, there is nothing to prove, and if $n$ is odd, it suffices to take the solvmanifold obtained as the quotient of the Lie group $\R\times G$ by the lattice $\Z\times \varGamma$ equipped with the product metric, which is flat. Then \[\hol(\Z\times\varGamma\backslash \R\times G)\cong \hol(S^1)\times\hol(\varGamma\backslash G)\cong \{e\}\times A\cong A.\]
\end{proof}

\section{Minimal dimension}

The dimension of the solvmanifold constructed in Theorem \ref{holabelianofinito} might not be the minimal one. It is well known that every finite group $G$ is the holonomy group of some compact flat manifold  \cite{AusK}. It is interesting to find the minimal dimension of a compact flat manifold with holonomy group $G$. In \cite{Ch}, L. Charlap classified the compact flat manifolds with holonomy group $\Z_p$, called $\Z_p$-manifolds, and he proved that the minimal dimension of such a manifold is $p$. More generally, Hiller \cite{Hi} proved that the minimal dimension, denoted $d(n)$, of a compact flat manifold with holonomy group $\Z_n$ is $\Phi(n)+1$, where $\Phi$ is an \comillas{additive} version of the Euler totient function, $\varphi$.

\medskip

\begin{defi}
	The function $\Phi$ is defined by the two following rules:
	\begin{enumerate}
		\item If $p$ is prime, $\Phi(p^k)=\varphi(p^k)$, for all $k\in\N$.
		
		\smallskip
		
		\item If $m$ and $n$ are coprime, then $\Phi(mn)=\Phi(m)+\Phi(n)$, unless $m=2$ and $n$ is an odd number, $n\geq 3$, in which case $\Phi(2n)=\Phi(n)$.
	\end{enumerate}
\end{defi} 
Note that $\Phi(1)=0$, $\Phi(2)=1$ and $\Phi(n)$ is even for all $n\geq 3$.

\medskip

Consider now the problem of finding the minimal dimension of a flat solvmanifold with holonomy group $\Z_n$, which we will denote $d_S(n)$. At first glance, we know that $d(n)\leq d_S(n)$ for all $n$. If we look at $n=2$ the inequality is strict. Indeed, $d(2)=2$ because the Klein Bottle is a compact flat manifold of dimension 2 with holonomy group $\Z_2$, and $d_S(2)=3$ given that every flat solvmanifold of dimension 2 is diffeomorphic to a torus, and the \comillas{dicosm} (see next section) is a flat solvmanifold of dimension 3 with holonomy group $\Z_2$. Fortunately, this is the only case of the strict inequality, and that is what we are going to prove immediately, namely, \[d(n)=d_S(n), \;\text{for all}\;n\geq 3.\] The main idea to obtain this result is to reduce the dimension of the solvmanifold in Theorem \ref{holabelianofinito}. 

\medskip

First we prove an auxiliary result in which we exhibit a way to construct a flat solvmanifold with holonomy group $\Z_{\lcm(m,n)}$ beginning with flat solvmanifolds with holonomy group $\Z_m$ and $\Z_n$, respectively.
\smallskip

\begin{prop}
	Let $G_a=\R\ltimes_{\varphi_a}\R^{2 d_1}$ and $G_b=\R\ltimes_{\varphi_b}\R^{2 d_2}$ be flat almost abelian Lie groups. If there exist $t_0,\,t_1\in\R-\{0\}$ such that $\varphi_a(t_0)$ and $\varphi_b(t_1)$ are conjugate to integer matrices with $\ord(\varphi_a(t_0))=m$ and $\ord(\varphi_b(t_1))=n$, then there exists a flat almost abelian solvmanifold of dimension $2d_1+2d_2+1=\dim G_a+\dim G_b-1$ with holonomy group $\Z_{\lcm(m,n)}$.
\end{prop}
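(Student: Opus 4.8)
The plan is to resist the naive product construction. Forming $G_a\times G_b$ would produce a flat solvmanifold of dimension $2d_1+2d_2+2$ with holonomy $\Z_m\times\Z_n$, which is one dimension too large and generally has the wrong holonomy group. Instead I would \emph{synchronize} the two one-parameter groups into a single one, so that one well-chosen time simultaneously realizes $\varphi_a(t_0)$ and $\varphi_b(t_1)$. Concretely, I would define
\[
\psi(t)=\matriz{\varphi_a(t_0 t)&0\\0&\varphi_b(t_1 t)}
\]
and consider the almost abelian Lie group $G=\R\ltimes_\psi\R^{2d_1+2d_2}$. Its infinitesimal generator is the block-diagonal matrix $t_0\ad_{x_a}\oplus t_1\ad_{x_b}$, which is skew-adjoint because each summand is; hence by Theorem \ref{algebraplana} the associated left invariant metric on $G$ is flat, so $G$ is a flat almost abelian Lie group of dimension $2d_1+2d_2+1=\dim G_a+\dim G_b-1$.

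The next step is to evaluate at $t=1$, where $\psi(1)=\varphi_a(t_0)\oplus\varphi_b(t_1)$. Writing $\varphi_a(t_0)=P_a E_a P_a^{-1}$ and $\varphi_b(t_1)=P_b E_b P_b^{-1}$ with $E_a,E_b$ integer matrices, the block-diagonal conjugation $P=P_a\oplus P_b$ gives $P^{-1}\psi(1)P=E_a\oplus E_b$, an integer matrix. Thus $\psi(1)$ is conjugate to an integer matrix and, since $1\neq 0$, Proposition \ref{bock} furnishes a lattice $\varGamma=\Z\ltimes_\psi P\Z^{2d_1+2d_2}$ of $G$. Because $\psi(1)$ is block diagonal, its order is the least common multiple of the orders of its two blocks, namely $\lcm(\ord(\varphi_a(t_0)),\ord(\varphi_b(t_1)))=\lcm(m,n)$; applying Theorem \ref{holonomia_lcm} then yields $\hol(\varGamma\backslash G)\cong\Z_{\lcm(m,n)}$, which is exactly the claim.

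I do not anticipate a genuine obstacle here: the whole content lies in spotting the time-rescaling trick that fuses the two $\R$-factors into one, thereby saving a dimension compared with the Riemannian product. The only points needing a little care are verifying that the combined generator $t_0\ad_{x_a}\oplus t_1\ad_{x_b}$ remains skew-adjoint (so that flatness via Theorem \ref{algebraplana} is preserved) and that the two conjugating matrices assemble into a single block-diagonal $P\in\GL(2d_1+2d_2,\R)$; both are immediate from the block structure. The degenerate case $\lcm(m,n)=1$ is subsumed by the $d=1$ clause of Theorem \ref{holonomia_lcm}.
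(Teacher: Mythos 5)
Your proof is correct and follows essentially the same route as the paper: the paper also fuses the two factors into a single almost abelian group by rescaling one of the one-parameter families (it sets $c_j t_0 = b_j t_1$ so that everything is evaluated at $t_0$, whereas you rescale both so as to evaluate at $t=1$ -- the same trick up to normalization), then conjugates block-by-block to an integer matrix, invokes Proposition \ref{bock} for the lattice, and computes the holonomy as $\Z_{\lcm(m,n)}$ via Theorem \ref{holonomia_lcm}.
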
 

\begin{proof}[Proof:\nopunct]
	Let $\varphi_a(t)=\matriz{\theta(a_1 t)&&\\&\ddots&\\&&\theta(a_{d_1} t)}$,  $\varphi_b(t)=\matriz{\theta(b_1 t)&&\\&\ddots&\\&&\theta(b_{d_2}t)}$ and $G_d$ the Lie group $G_d=\R\ltimes_{\varphi_d}\R^{2d_1+2d_2}$ where \[\varphi_d(t)=\matriz{\varphi_a(t)&\\&\varphi_c(t)},\quad\text{with}\; c_jt_0=b_j t_1,\,\text{for}\;1\leq j\leq d_2.\] Let $P_1, P_2$ invertible matrices such that $P_1^{-1}\varphi_a(t_0)P_1=E_1$ and $P_2^{-1}\varphi_b(t_1)P_2=E_2$, with $E_1$ and $E_2$ invertible integer matrices. Then $P=\matriz{P_1&\\&P_2}$ conjugates the matrix $\varphi_d(t_0)$ to the integer matrix $\matriz{E_1&\\&E_2}$. It follows from Proposition \ref{bock} that the Lie group $G_d$ admits a lattice $\varGamma_d$, and by Theorem \ref{holonomia_lcm} the corresponding solvmanifold has holonomy group $\Z_d$ with \[d=\ord(\varphi_d(t_0))=\lcm(\ord(\varphi_a(t_0)),\ord(\varphi_c(t_1))=\lcm(m,n).\]
\end{proof}

We can generalize this result inductively and thus we obtain

\smallskip

\begin{coro}\label{construccionsuma}
	Let $G_1=\R\ltimes_{\varphi_{a_1}}\R^{2d_i}, \ldots, G_n=\R\ltimes_{\varphi_{a_n}}\R^{2d_n}$ be flat almost abelian Lie groups. If there exist $t_1, \ldots, t_n\in\R-\{0\}$ such that $\varphi_{a_i}(t_i)$ is conjugate to an invertible integer matrix $E_i$ for every $1\leq i\leq n$ and $\ord(\varphi_{a_i}(t_i))=m_i$ then there exists a flat almost abelian solvmanifold of dimension \[\sum_{i=1}^n (\dim G_i-1)+1=1+2\sum_{i=1}^n d_i\] with holonomy group $\Z_d$ with $d=\lcm(m_1,\,\ldots\,,m_n)$.
\end{coro}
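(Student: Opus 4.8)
The plan is to argue by induction on the number $n$ of factors, using the preceding Proposition as both the base case ($n=2$; the case $n=1$ is trivial, being the group itself) and the engine of the inductive step. The point that makes the induction run is that the Proposition does more than produce a solvmanifold: its proof actually constructs a flat almost abelian Lie group $G_d=\R\ltimes_{\varphi_d}\R^{2d_1+2d_2}$ together with a distinguished time $t_0\neq 0$ and a block-diagonal $P$ conjugating $\varphi_d(t_0)$ to the invertible integer matrix $\matriz{E_1&\\&E_2}$, with $\ord(\varphi_d(t_0))=\lcm(m,n)$. So I would first record this strengthened form of the Proposition: its output is again an object of exactly the same kind as the inputs, carrying the data consisting of the Lie group, a nonzero evaluation time, and an invertible integer conjugate of $\varphi_d$ at that time. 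This is precisely the data needed to feed the construction back into itself.

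With that strengthening the induction is immediate. For the inductive step I would apply the Proposition to the pair consisting of $G_n$ and the flat almost abelian Lie group $G'=\R\ltimes_{\varphi'}\R^{2(d_1+\cdots+d_{n-1})}$ obtained from the first $n-1$ factors, which by the inductive hypothesis comes equipped with a time $s\neq 0$ and an invertible integer matrix conjugate to $\varphi'(s)$ of order $\lcm(m_1,\ldots,m_{n-1})$. The key mechanical device, inherited from the Proposition, is the rescaling $c_j s=b_j t_n$ of the rotation angles of $G_n$, which forces the two factors to be evaluated at one common time $s$; the resulting combined endomorphism $\varphi(s)=\matriz{\varphi'(s)&\\&\varphi_{a_n}(t_n)}$ is then conjugate, through the block-diagonal matrix built from the two conjugating matrices, to a block-diagonal invertible integer matrix. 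Proposition \ref{bock} supplies a lattice, and Theorem \ref{holonomia_lcm} identifies the holonomy group as $\Z_d$ with $d=\ord(\varphi(s))=\lcm(\lcm(m_1,\ldots,m_{n-1}),m_n)=\lcm(m_1,\ldots,m_n)$, using associativity of the least common multiple.

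The dimension bookkeeping is routine: each factor has $\dim G_i=2d_i+1$, and the block-diagonal construction simply concatenates the $2d_i$-dimensional abelian pieces while keeping a single copy of the $\R$-factor, so the total dimension is $1+2\sum_{i=1}^n d_i=\sum_{i=1}^n(\dim G_i-1)+1$, exactly as claimed. I do not expect any genuine obstacle here; the only point requiring care---and the one I would state explicitly rather than leave implicit---is that the inductive hypothesis must be phrased at the level of the Lie group together with its distinguished evaluation time and integer conjugate, not merely at the level of the resulting solvmanifold, since otherwise there would be nothing to rescale and recombine at the next stage.
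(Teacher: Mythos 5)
Your proposal is correct and is exactly the argument the paper intends: the paper itself gives no written proof beyond the remark that the Proposition ``can be generalized inductively,'' and your write-up supplies precisely that induction, correctly observing that the inductive hypothesis must carry the Lie group together with its evaluation time and integer conjugate so that the rescaling-and-recombining step of the Proposition can be iterated. No gaps; nothing further to add.
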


Next we prove the main result of this section.

\begin{teo}\label{mindim}
	Let $n\geq 3$, then $d(n)=d_S(n)$.
\end{teo}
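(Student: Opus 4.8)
The plan is to establish the two inequalities $d(n)\le d_S(n)$ and $d_S(n)\le d(n)$ separately. The first is immediate: any flat solvmanifold with holonomy group $\Z_n$ is in particular a compact flat manifold with holonomy group $\Z_n$ (as shown in \S3, such a solvmanifold is isometric to $\R^N/\varGamma$ with $\varGamma$ a Bieberbach group whose point group is $\Z_n$), so a minimal flat solvmanifold realizing $\Z_n$ witnesses $d(n)\le d_S(n)$. Since Hiller's theorem gives $d(n)=\Phi(n)+1$, it remains only to produce, for each $n\ge 3$, a flat solvmanifold with holonomy group $\Z_n$ of dimension exactly $\Phi(n)+1$; this yields $d_S(n)\le\Phi(n)+1=d(n)$ and hence equality.

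First I would note why the Riemannian-product construction of Theorem \ref{holabelianofinito} overshoots: gluing the prime-power blocks as a product keeps one separate $\R$-direction per factor, whereas Corollary \ref{construccionsuma} assembles blocks $G_i=\R\ltimes_{\varphi_{a_i}}\R^{2d_i}$ into a single almost abelian solvmanifold of dimension $1+2\sum_i d_i$ with holonomy $\Z_{\lcm(m_1,\dots,m_r)}$, sharing a single $\R$-direction among all of them. Thus the task reduces to choosing, for the factorization $n=2^{k_0}p_1^{k_1}\cdots p_r^{k_r}$ with odd primes $p_i$, a family of the blocks built in Theorem \ref{holpotprimos} whose orders have $\lcm$ equal to $n$ and whose reduced dimensions $2d_i$ sum to exactly $\Phi(n)$.

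The selection of blocks is dictated by the defining rules of $\Phi$, and this bookkeeping is where the only real subtlety lies — matching the exceptional rule $\Phi(2m)=\Phi(m)$ for odd $m\ge 3$. I would distinguish three cases. If $k_0=0$, take the $\Z_{p_i^{k_i}}$-blocks, each of reduced dimension $\varphi(p_i^{k_i})$; their orders are coprime prime powers with $\lcm=n$, and the reduced dimensions sum to $\sum_i\varphi(p_i^{k_i})=\Phi(n)$. If $k_0=1$ (so that $m=n/2\ge 3$, forcing $r\ge 1$), replace the block for $p_1^{k_1}$ by the $\Z_{2p_1^{k_1}}$-block of Theorem \ref{holpotprimos}, which has the same reduced dimension $\varphi(p_1^{k_1})$ but contributes the extra factor $2$ to the $\lcm$ for free; the total reduced dimension is still $\sum_i\varphi(p_i^{k_i})=\Phi(m)=\Phi(2m)=\Phi(n)$, exactly mirroring the exceptional rule. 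If $k_0\ge 2$, adjoin the $\Z_{2^{k_0}}$-block (available precisely because $k_0\ge 2$), of reduced dimension $\varphi(2^{k_0})=2^{k_0-1}$, to the odd prime-power blocks; then the $\lcm$ is $n$ and the reduced dimensions sum to $2^{k_0-1}+\sum_i\varphi(p_i^{k_i})=\Phi(2^{k_0})+\Phi(m)=\Phi(n)$.

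In every case Corollary \ref{construccionsuma} delivers a flat almost abelian solvmanifold of dimension $1+\Phi(n)$ with holonomy group $\Z_n$, giving $d_S(n)\le\Phi(n)+1=d(n)$, and combined with $d(n)\le d_S(n)$ this yields $d(n)=d_S(n)$. The main obstacle is purely combinatorial: guaranteeing that the chosen orders have $\lcm$ exactly $n$ while the reduced dimensions add up to $\Phi(n)$, which forces the use of the composite $\Z_{2p^k}$- and $\Z_{2^{k_0}}$-blocks (rather than a nonexistent $\Z_2$-block) to absorb the prime $2$ without paying extra dimension. I would emphasize that the hypothesis $n\ge 3$ is exactly what ensures $m\ge 3$ in the case $k_0=1$, so that the exceptional rule applies and the construction does not degenerate into the excluded case $n=2$.
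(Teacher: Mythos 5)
Your proposal is correct and follows essentially the same route as the paper: both reduce to showing $d_S(n)\le\Phi(n)+1$ by assembling the prime-power blocks of Theorem \ref{holpotprimos} into a single almost abelian solvmanifold via Corollary \ref{construccionsuma}, with the same case split on the power of $2$ (using the $\Z_{2p_1^{k_1}}$-block when $n=2q$ with $q$ odd, and the $\Z_{2^{k_0}}$-block when $k_0\ge 2$) to match the exceptional rule defining $\Phi$. The only cosmetic difference is that the paper first treats prime powers separately before the general case.
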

\begin{proof}[Proof:\nopunct]
	First we prove the statement in the particular case when $n$ is a power of a prime. If $n=p^k$ with $p$ prime and $k\in\N$ ($k\geq 2$ if $p=2$), by Theorem \ref{holpotprimos} there exists a flat almost abelian solvmanifold of dimension $(p-1)p^{k-1}+1$ with holonomy group $\Z_{p^k}$, which implies \[d_S(p^k)\leq (p-1)p^{k-1}+1=\varphi(p^k)+1=\Phi(p^k)+1=d(p^k).\] Therefore, $d_S(p^k)=d(p^k)$.
	
	\medskip
	According to the definition of $\Phi$ we will have to consider separately the case when $n$ is $2q$ with $q$ odd. We begin by considering the other cases.
	
	First let $n=2^{k_0} p_1^{k_1}\cdots p_m^{k_m}$ with $k_0\geq 2$ and $p_i$ different odd primes with $k_i\in\N$. Using Theorem \ref{holpotprimos}, we deduce that there exist $t_0,t_1,\ldots,t_m\in\R-\{0\}$ such that $\varphi_{a_i}(t_i)$ (for some choice of $a_i$ depending on $t_i$) are conjugate to integer matrices $E_i$ and $\ord(\varphi_{a_i}(t_i))=p_i^{k_i}$ for all $0\leq i\leq m$, where $p_0=2$. Considering the almost abelian Lie groups \[G_0=\R\ltimes_{\varphi_{a_0}}\R^{\Phi(2^k)}, G_1=\R\ltimes_{\varphi_{a_1}}\R^{\Phi(p_1^{k_1})},\;\ldots\;,\;G_m=\R\ltimes_{\varphi_{a_m}}\R^{\Phi(p_m^{k_m})},\] it follows from Corollary \ref{construccionsuma} that there exists a flat solvmanifold of dimension $\Phi(2^{k_0})+\Phi(p_1^{k_1})+\cdots+\Phi(p_m^{k_m})+1$ with holonomy group $\Z_n$. In consequence, 
	\begin{align*}
	d_S(n)&\leq\Phi(2^{k_0})+\Phi(p_1^{k_1})+\cdots+\Phi(p_m^{k_m})+1\\
	&=\Phi(2^{k_0} p_1^{k_1}\cdots p_m^{k_m})+1\\
	&=d(n).
	\end{align*}
	Therefore $d_S(n)=d(n)$. Clearly, if $k_0=0$ the proof follows analogously. 
	
	Finally, if $n=2q$ with $q$ odd, we can write $n=2 p_1^{k_1}\cdots p_m^{k_m}$ with $p_i$ different odd primes and $k_i\in\N$. According to Theorem \ref{holpotprimos}, there exist $t_1,\ldots,t_m\in\R-\{0\}$ such that $\varphi_{a_i}(t_i)$ (for some choice of $a_i$ depending on $t_i$) are conjugate to integer matrices $E_i$ for $1\leq i\leq m$ and, furthermore $\ord(\varphi_{a_1}(t_1))=2 p_1^{k_1}$ and $\ord(\varphi_{a_i}(t_i))=p_i^{k_i}$ for $2\leq i\leq m$. Thus, if we consider the almost abelian Lie groups $G_1=\R\ltimes_{\varphi_{a_1}}\R^{\Phi(2 p_1^{k_1})}$ and $G_i=\R\ltimes_{\varphi_{a_i}}\R^{\Phi(p_i^{k_i})}$ for $2\leq i\leq m$, it follows from Corollary \ref{construccionsuma} that
	\begin{align*}
	d_S(n)&\leq \Phi(2 p_1^{k_1})+\Phi(p_2^{k_2})+\cdots+\Phi(p_m^{k_m})+1\\
	&=\Phi(p_1^{k_1})+\Phi(p_2^{k_2})+\cdots+\Phi(p_m^{k_m})+1\\
	&=\Phi(p_1^{k_1}p_2^{k_2}\cdots p_m^{k_m})+1\\
	&=\Phi(2p_1^{k_1} p_2^{k_2}\cdots p_m^{k_m})+1\\
	&=d(n).
	\end{align*}
	Therefore, $d_S(n)=d(n)$. 
	
	In conclusion, $d_S(n)=d(n)$ for all $n\geq 3$. 
\end{proof}

In other words, this theorem says that the compact flat manifold with holonomy group $\Z_n$ of minimal dimension can be chosen as a solvmanifold.

\medskip

As a consequence of the proof of Theorem \ref{mindim} we obtain the following result, which provides another proof of Theorem \ref{holabelianofinito} for cyclic groups.
 
\begin{coro}
Given $n\in\N, n\geq 2$, there exists a flat almost abelian solvmanifold which has holonomy group $\Z_n$.
\end{coro}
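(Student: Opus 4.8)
The plan is to obtain the statement almost for free from the constructions already carried out in the proof of Theorem \ref{mindim}, handling only the smallest value $n=2$ separately. For every $n\geq 3$ that proof produces a flat solvmanifold with holonomy group $\Z_n$ by factoring $n=p_1^{k_1}\cdots p_m^{k_m}$ into powers of \emph{distinct} primes and splicing together, by means of Corollary \ref{construccionsuma}, the almost abelian pieces supplied by Theorem \ref{holpotprimos}. Since the primes are distinct one has $\lcm(p_1^{k_1},\ldots,p_m^{k_m})=n$, so the holonomy group produced by Corollary \ref{construccionsuma} is exactly $\Z_n$; and since each piece is an almost abelian Lie group $\R\ltimes_\varphi\R^{2d_i}$ and Corollary \ref{construccionsuma} glues them into a single almost abelian semidirect product, the resulting solvmanifold is again flat almost abelian. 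Thus for $n\geq 3$ nothing new is required: the solvmanifold constructed in Theorem \ref{mindim} is already of the desired form.

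It remains to treat $n=2$, which is deliberately excluded from Theorem \ref{holpotprimos} (there the prime power $2^k$ is realized only for $k\geq 2$), and here I would simply exhibit the example by hand. Take the flat almost abelian Lie group $G=\R\ltimes_\varphi\R^2$ with $a_1=1$, so that $\varphi(t)=\theta(t)=\left(\begin{smallmatrix}\cos t&\sin t\\-\sin t&\cos t\end{smallmatrix}\right)$. Choosing $t_0=\pi$ gives $\varphi(\pi)=-\Id$, an integer matrix of determinant $1$ and order $2$. Proposition \ref{bock} then yields the lattice $\varGamma=\pi\Z\ltimes_\varphi\Z^2$, and Theorem \ref{holonomia_lcm} with $d=\ord(\theta(\pi))=2$ gives $\hol(\varGamma\backslash G)\cong\Z_2$. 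This three-dimensional solvmanifold is the dicosm, which completes the case $n=2$.

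The only point demanding care is that the order of the holonomy comes out equal to $n$ and not to a proper divisor or multiple. This is secured by always choosing \emph{pairwise coprime} prime-power factors, so that the $\lcm$ appearing in Corollary \ref{construccionsuma} coincides with their product $n$. The parity subtleties that complicate Theorem \ref{mindim}, namely the exceptional behaviour of the factor $2$ in the definition of $\Phi$ and the restriction $k\geq 2$ for the prime $2$ in Theorem \ref{holpotprimos}, are irrelevant for a pure existence statement: when $n$ is twice an odd number one invokes instead the $\Z_{2p^{k}}$ construction of Theorem \ref{holpotprimos}, and when $n=2$ one uses the dicosm above. No genuine obstacle remains beyond organizing these cases according to the $2$-adic valuation of $n$.
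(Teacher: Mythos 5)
Your proposal is correct and follows essentially the same route as the paper, which obtains this corollary directly from the constructions in the proof of Theorem \ref{mindim} (combining Theorem \ref{holpotprimos} with Corollary \ref{construccionsuma}), with the case $n=2$ covered by the dicosm $\varGamma\backslash E(2)$ already noted in \S4. Your explicit verification that $\varphi(\pi)=-\Id$ yields the lattice $\pi\Z\ltimes_\varphi\Z^2$ with holonomy $\Z_2$ is exactly the example the paper has in mind.
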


\medskip

\section{Low dimensions}
In this section we will talk about low-dimensional solvmanifolds which admit a flat metric. We are interested in the possible holonomy groups, to see a classification of low-dimensional flat solvmanifolds we refer to \cite{Mor}.

Note that an abelian Lie algebra gives rise to a simply connected abelian Lie group and thus every solvmanifold obtained as a quotient of an abelian Lie group is diffeomorphic to a torus which has trivial holonomy. Therefore, we will not consider abelian Lie algebras.

Let $\g=\z(\g)\oplus\b\oplus [\g,\g]$ be a flat Lie algebra. Recall from Theorem \ref{alglieplanas} that $\dim[\g,\g]$ is even, so that if $\g$ is not abelian, then $\dim\g>2$.

\subsection{Dimension 3}
The only non abelian possibility is $\dim\b=1$ and $\dim[\g,\g]=2$, so that $\g$ is almost abelian according to Theorem \ref{alglieplanas} and hence $\g=\R x\ltimes_{\ad_x}\R^2$ where $\ad_x:\R^2\to\R^2$ is skew-adjoint and invertible. There exists an orthonormal basis $\mathcal{B}=\{e_1,f_1\}$ of $\R^2$ such that \[ [\ad_x]_{\B}=\matriz{0&a\\-a&0},\quad\text{with}\;a\in\R-\{0\}.\]
We will denote by $\g_a$ this flat Lie algebra. It is easy to see that $\g_a$ is isometrically isomorphic to $\g_b$ if and only if $b=\pm a$.  However they are all isomorphic as Lie algebras. Since the existence of lattices only depends on the Lie group structure we may assume $a=1$. The Lie algebra $\g_1$ is isomorphic to the Lie algebra of the group of rigid motions of the Euclidean plane and traditionally it is denoted by $\e(2)$, i.e.
 \[\e(2)=\R x\ltimes_{\ad_x}\R^2,\] with brackets given by
 \[[x,e_1]=-f_1,\quad [x,f_1]=e_1.\]
 
 \bigskip
 
 The simply connected Lie group associated to $\e(2)$ is 
 \begin{equation}\label{eq18}
 E(2)=\R\ltimes_\varphi \R^2,\qquad\varphi(t)=e^{t \ad_x}=\begin{pmatrix}
 \cos(t)&\sin(t)\\
 -\sin(t)&\cos(t)
 \end{pmatrix}.
 \end{equation}
 
 \begin{prop}\label{conjentera3}
 For $t_0\neq 0$ the matrix $\varphi(t_0)$ is conjugate to an integer matrix if and only if $t_0\in\{\frac{\pi}{2}, \frac{3\pi}{2}, \frac{2\pi}{3}, \frac{4\pi}{3}, \frac{\pi}{3}, \frac{5\pi}{3}, \pi, 2\pi\}+2\pi\Z$.
 \end{prop}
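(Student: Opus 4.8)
The plan is to reduce the entire question to an integrality condition on the single number $2\cos(t_0)$. First I would record that, since $\varphi(t_0)$ is a rotation as in \eqref{eq18}, its determinant equals $1$ and its trace equals $2\cos(t_0)$, so its characteristic polynomial is $p(z)=z^2-2\cos(t_0)z+1$, with eigenvalues $e^{\pm i t_0}=\cos(t_0)\pm i\sin(t_0)$. The proposition will then hinge on deciding for which $t_0$ the polynomial $p$ has integer coefficients, together with a realizability argument.

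For the necessity direction, suppose $\varphi(t_0)$ is conjugate to an integer matrix $M$. Conjugate matrices share the same characteristic polynomial, and the characteristic polynomial of an integer matrix has integer coefficients; hence $2\cos(t_0)\in\Z$. Because $|\cos(t_0)|\le 1$, this forces $2\cos(t_0)\in\{-2,-1,0,1,2\}$, i.e. $\cos(t_0)\in\{-1,-\tfrac12,0,\tfrac12,1\}$. Solving for $t_0$ modulo $2\pi$ (and discarding $t_0=0$, which is excluded by hypothesis) yields exactly the eight residues $\tfrac{\pi}{2},\tfrac{3\pi}{2},\tfrac{2\pi}{3},\tfrac{4\pi}{3},\tfrac{\pi}{3},\tfrac{5\pi}{3},\pi,2\pi$, which is precisely the asserted set.

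For sufficiency I would treat these values in two groups. When $\cos(t_0)=\pm1$, that is $t_0\in\{\pi,2\pi\}+2\pi\Z$, the matrix $\varphi(t_0)$ equals $\pm\Id$, which is already an integer matrix. For the remaining six values one has $\cos(t_0)\in\{0,\pm\tfrac12\}$ and $t_0\notin\pi\Z$, so the two eigenvalues $e^{\pm i t_0}$ are distinct; since $p$ now has integer coefficients, the argument from the Remark following Theorem \ref{conjenteraordfinito} applies: $\varphi(t_0)$ is conjugate over $\C$ to the companion matrix $\matriz{0&-1\\1&2\cos(t_0)}$ of $p$, which is an integer matrix, and two real matrices conjugate over $\C$ are conjugate over $\R$. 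This settles both directions.

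There is no serious obstacle here. The only genuinely structural input is the elementary bound $|\cos(t_0)|\le 1$ that pins down the finite list in the necessity direction, and the (already recorded) companion-matrix realization that handles sufficiency; everything else is routine bookkeeping with the inverse cosine.
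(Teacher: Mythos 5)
Your proof is correct and follows essentially the same route as the paper: integrality of the trace $2\cos(t_0)$ pins down the finite list, and sufficiency is handled by observing the matrix is already integral or by conjugating to the integer companion matrix of its characteristic polynomial when the eigenvalues are distinct. The only cosmetic difference is that you route the $\cos(t_0)=0$ cases through the companion-matrix argument, whereas the paper notes those matrices are already integer.
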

 
 \begin{proof}[Proof:\nopunct]
  If $\varphi(t_0)$ is conjugate to an integer matrix, then \[2\cos(t_0)=\tr(\varphi(t_0))=k,\quad\text{for some}\,k\in\Z.\] As $-2\leq 2\cos(t_0)\leq 2$, the only possibilities for $k$ are $k=0,\pm 1,\pm 2$. It follows that, $\cos(t_0)=0,\pm \frac{1}{2},\pm 1$ which implies $t_0\in\{\frac{\pi}{2}, \frac{3\pi}{2}, \frac{2\pi}{3}, \frac{4\pi}{3}, \frac{\pi}{3}, \frac{5\pi}{3}, \pi, 2\pi\}+2\pi\Z$.
 
 \medskip
 
 Conversely, if $t_0\in\{\frac{\pi}{2},\frac{3\pi}{2},\pi,2\pi\}+2\pi\Z$ then $\varphi(t_0)$ is already an integer matrix. Suppose now that $t_0\in\{\frac{2\pi}{3}, \frac{4\pi}{3}, \frac{\pi}{3}, \frac{5\pi}{3}\}+2\pi\Z$. The eigenvalues of $\varphi(t_0)$ are $e^{\pm i t_0}$, which are different. Therefore $\varphi(t_0)$ is conjugate to the companion matrix of its characteristic polynomial, $p(\lambda)=\lambda^2-2\lambda\cos(t_0)+1$, which has integer coefficients because $2\cos(t_0)=\pm 1$ where the sign depends on $t_0$. 
 \end{proof}
 
 \begin{coro}\label{hol3}
 	The possible holonomy groups of three-dimensional flat solvmanifolds are $\{e\}, \Z_2, \Z_3, \Z_4$ and $\Z_6$.
 \end{coro}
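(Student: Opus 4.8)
The plan is to combine the computation of possible values of $t_0$ from Proposition \ref{conjentera3} with the holonomy formula from Theorem \ref{holonomia_lcm}. Since every three-dimensional non-abelian flat solvmanifold is a quotient of $E(2)=\R\ltimes_\varphi\R^2$ by a lattice $\varGamma=t_0\Z\ltimes_\varphi P\Z^2$, where $\varphi(t_0)$ is conjugate to an integer matrix, Theorem \ref{holonomia_lcm} tells us that the holonomy group is $\Z_d$ with $d=\ord(\varphi(t_0))=\ord(\theta(t_0))$ (here $n=1$ so the $\lcm$ collapses to a single order). Thus the task reduces to computing $\ord(\theta(t_0))$ for each admissible $t_0$ listed in Proposition \ref{conjentera3}, and recording which cyclic groups $\Z_d$ arise. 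The abelian case contributes only the trivial group $\{e\}$, corresponding to the torus.

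First I would recall that $\theta(t_0)$ is rotation by angle $t_0$, so $\ord(\theta(t_0))$ is the smallest positive integer $d$ with $d\,t_0\in 2\pi\Z$; equivalently, writing $t_0=2\pi\frac{a}{b}$ in lowest terms, we have $\ord(\theta(t_0))=b$. Then I would simply run through the eight representatives (modulo $2\pi\Z$) from Proposition \ref{conjentera3}. For $t_0=2\pi$ we get $d=1$ (trivial holonomy); for $t_0=\pi$ we get $d=2$; for $t_0=\tfrac{2\pi}{3}$ or $\tfrac{4\pi}{3}$ we get $d=3$; for $t_0=\tfrac{\pi}{2}$ or $\tfrac{3\pi}{2}$ we get $d=4$; and for $t_0=\tfrac{\pi}{3}$ or $\tfrac{5\pi}{3}$ we get $d=6$. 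Applying Theorem \ref{holonomia_lcm} in each case yields holonomy group $\Z_d$ with $d\in\{1,2,3,4,6\}$.

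Collecting these results gives exactly the groups $\{e\}$, $\Z_2$, $\Z_3$, $\Z_4$, and $\Z_6$, which is the claimed list. I would also remark that each of these genuinely occurs, since Proposition \ref{conjentera3} guarantees that each listed $t_0$ does give a lattice, so the possible holonomy groups are attained and not merely constrained. There is no real obstacle here: the statement is a direct corollary obtained by feeding the classification of admissible rotation angles into the holonomy computation, and the only thing to be careful about is correctly reading off the order of each rotation, i.e.\ noticing that the angles $\tfrac{\pi}{3},\tfrac{2\pi}{3},\tfrac{\pi}{2},\ldots$ produce orders $6,3,4,\ldots$ respectively rather than their naive denominators.
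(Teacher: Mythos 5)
Your proposal matches the paper's proof exactly: the paper likewise reduces to the almost abelian Lie group $E(2)$, applies Theorem \ref{holonomia_lcm} to each admissible $t_0$ from Proposition \ref{conjentera3}, and reads off the orders $1,2,3,4,6$. Your explicit tabulation of $\ord(\theta(t_0))$ for each representative is just a spelled-out version of the same argument, and it is correct.
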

 \begin{proof}[Proof:\nopunct]
 	 All the three-dimensional flat solvmanifolds are obtained as quotients of the flat almost abelian Lie group $E(2)$ so the corollary follows by using Theorem \ref{holonomia_lcm} for each of the values of $t_0$ obtained in Proposition \ref{conjentera3}. 
 \end{proof}
 
 The classification of three dimensional compact flat manifolds follows from the classification of the Bieberbach groups in dimension 3, which was completed on 1935 by W. Hantzsche and H. Wendt \cite{HW}. In Wolf's book \cite{Wo}, the ten compact flat manifolds of dimension 3 (up to diffeomorphism) are listed, where six of them are orientable and four are not. Recently, Conway and Rossetti in \cite{CR} studied in detail these manifolds, giving a new systematic manner to describe some properties. They propose the names \textit{platycosms}. The symbols and the names are the following:
 
 \begin{center}
 	\begin{tabular}{|c|c|c|c|c|c|}
 		\hline
 		Name \cite{Wo} & $\hol(M)$& $H_1(M,\Z)$&Orientable&Symbol\cite{CR}&Name\cite{CR} \\
 		\hline \rule{0pt}{2.5ex}
 		$\mathcal{G}_1$&$\{e\}$&$\Z\oplus\Z\oplus\Z$&$\checkmark$&$c1$&\textit{torocosm}\\ \rule{0pt}{2.5ex}
 		$\mathcal{G}_2$&$\Z_2$&$\Z\oplus\Z_2\oplus\Z_2$&$\checkmark$&$c2$&\textit{dicosm}\\ \rule{0pt}{2.5ex}
 		$\mathcal{G}_3$&$\Z_3$&$\Z\oplus\Z_3$&$\checkmark$&$c3$&\textit{tricosm}\\ \rule{0pt}{2.5ex}
 		$\mathcal{G}_4$&$\Z_4$&$\Z\oplus\Z_2$&$\checkmark$&$c4$&\textit{tetracosm}\\ \rule{0pt}{2.5ex}
 		$\mathcal{G}_5$&$\Z_6$&$\Z$&$\checkmark$&$c6$&\textit{hexacosm}\\ \rule{0pt}{2.5ex}
 		$\mathcal{G}_6$&$\Z_2\oplus\Z_2$&$\Z_4\oplus\Z_4$&$\checkmark$&$c22$&\textit{didicosm}\\ [0.5ex] \hline \rule{0pt}{2.5ex}
 		$\mathcal{B}_1$&$\Z_2$&$\Z\oplus\Z\oplus\Z_2$&$\times$&$+a1$&\textit{first amphicosm}\\ \rule{0pt}{2.5ex}
 		$\mathcal{B}_2$&$\Z_2$&$\Z\oplus\Z$&$\times$&$-a1$&\textit{second amphicosm}\\ \rule{0pt}{2.5ex}
 		$\mathcal{B}_3$&$\Z_2\oplus\Z_2$&$\Z\oplus\Z_2\oplus\Z_2$&$\times$&$+a2$&\textit{first amphidicosm}\\ \rule{0pt}{2.5ex}
 		$\mathcal{B}_4$&$\Z_2\oplus\Z_2$&$\Z\oplus\Z_4$&$\times$&$-a2$&\textit{second amphidicosm}\\ [0.5ex]
 		\hline
 	\end{tabular}
 \end{center}	
 
 \medskip
 
 We can identify which of these compact flat manifolds of dimension 3 can be seen as flat solvmanifolds. Indeed, as every solvmanifold is parallelizable, in particular orientable, we discard $\B_1, \B_2, \B_3$ and $\B_4$. Since, according to Theorem \ref{holonomia_lcm}, a flat almost abelian solvmanifold has cyclic holonomy group, the Hantzsche-Wendt manifold, $\G_6$, cannot be obtained as a flat solvmanifold either. Moreover, using Proposition \ref{bock} it can be seen that there exist lattices $\Gamma_i\subset E(2)$ for $i=1,\ldots, 5$ such that $\varGamma_i\backslash E(2)$ is diffeomorphic to $\G_i$.
 
\medskip

\subsection{Dimension 4}
Let $\g=\z(\g)\oplus\b\oplus [\g,\g]$ a non abelian flat Lie algebra of dimension 4. Evidently, the only possibility is $\dim[\g,\g]=2$, $\dim\b=1$ and $\dim\z(\g)=1$. We can write $\g$ as $\g=\R x\ltimes_{\ad_x} \R^3$, where $[\ad_x]_{\mathcal{B}}=\matriz{0&0&0\\0&0&1\\0&-1&0}$ for some basis $\mathcal{B}=\{z,e_1,f_1\}$ of $\R^3$. Then the simply connected Lie group is \begin{equation}\label{ecuación}
G=\R\ltimes_\varphi \R^3,\quad\text{with}\;
\varphi(t)=\matriz{1&0&0\\0&\cos(t)&\sin(t)\\0&-\sin(t)&\cos(t)}.
\end{equation}

\medskip

As $G$ is almost abelian, we can use Proposition \ref{bock} to determine its lattices.

\medskip

\begin{lema}\label{conjentera4}
	For $t_0\neq 0$ the matrix $\varphi(t_0)$ is conjugate to an integer matrix if and only if 
	$t_0\in\big\{\frac{\pi}{2}, \frac{3\pi}{2}, \frac{2\pi}{3}, \frac{4\pi}{3}, \frac{\pi}{3}, \frac{5\pi}{3}, \pi, 2\pi\big\}+2\pi\Z$.
	\end{lema}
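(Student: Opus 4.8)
The plan is to follow the argument of Proposition \ref{conjentera3} almost verbatim, the only new feature being the extra fixed direction coming from $\z(\g)$, which contributes an eigenvalue $1$ to $\varphi(t_0)$. First I would record the characteristic polynomial of $\varphi(t_0)$: since $\varphi(t_0)$ is block diagonal with a $1\times 1$ block equal to $1$ and a $2\times2$ rotation block $\theta(t_0)$, it factors as $(\lambda-1)(\lambda^2-2\cos(t_0)\lambda+1)$, so that $\tr(\varphi(t_0))=1+2\cos(t_0)$ and the complex eigenvalues are $e^{\pm i t_0}$.

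For the forward implication, if $\varphi(t_0)$ is conjugate to an integer matrix $E$, then $\tr(E)=\tr(\varphi(t_0))=1+2\cos(t_0)\in\Z$, hence $2\cos(t_0)\in\Z$. As $-2\le 2\cos(t_0)\le 2$, the only possibilities are $2\cos(t_0)\in\{0,\pm1,\pm2\}$, i.e. $\cos(t_0)\in\{0,\pm\tfrac12,\pm1\}$, which yields exactly the stated list of $t_0$ modulo $2\pi$.

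For the converse I would split into the same two cases as in the three-dimensional situation. When $t_0\in\{\frac{\pi}{2},\frac{3\pi}{2},\pi,2\pi\}+2\pi\Z$ one has $\cos(t_0)\in\{0,\pm1\}$ and $\sin(t_0)\in\{0,\pm1\}$, so $\varphi(t_0)$ is already an integer matrix. When $t_0\in\{\frac{2\pi}{3},\frac{4\pi}{3},\frac{\pi}{3},\frac{5\pi}{3}\}+2\pi\Z$ one has $\cos(t_0)=\pm\tfrac12$, so the characteristic polynomial $(\lambda-1)(\lambda^2-2\cos(t_0)\lambda+1)$ has integer coefficients because $2\cos(t_0)=\pm1$. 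The eigenvalues $1,\,e^{it_0},\,e^{-it_0}$ are pairwise distinct (the two complex ones differ, and neither equals $1$ since $\cos(t_0)=\pm\tfrac12\neq 1$), so $\varphi(t_0)$ is conjugate over $\C$ to the companion matrix of its characteristic polynomial, which is an integer matrix. Invoking the fact recorded in the Remark following Theorem \ref{conjenteraordfinito}, that two real matrices conjugate over $\C$ are conjugate over $\R$, I conclude that $\varphi(t_0)$ is conjugate over $\R$ to an integer matrix.

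The step requiring the most care, though it is hardly a genuine obstacle, is verifying in the converse that the eigenvalue $1$ coming from the center does not collide with $e^{\pm i t_0}$ when $\cos(t_0)=\pm\tfrac12$; it is precisely the distinctness of all three eigenvalues that makes the companion-matrix argument applicable. Apart from this bookkeeping, the proof is a direct transcription of Proposition \ref{conjentera3}, and indeed it explains why the admissible set of $t_0$ is identical to the one obtained there.
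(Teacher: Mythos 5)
Your proof is correct and follows essentially the same strategy as the paper: the forward direction is the identical trace computation $\tr(\varphi(t_0))=1+2\cos(t_0)\in\Z$, forcing $\cos(t_0)\in\{0,\pm\tfrac12,\pm1\}$. The only (harmless) difference is in the converse: the paper simply extends the conjugating matrix $P$ from Proposition \ref{conjentera3} to the block matrix $\tilde{P}=\matriz{1&0\\0&P}$, so that the $1\times1$ block contributes the integer entry $1$ and nothing further needs checking, whereas you rerun the companion-matrix argument on the full $3\times3$ matrix, which then obliges you to verify (as you correctly do) that the eigenvalue $1$ does not collide with $e^{\pm it_0}$; both routes are valid and reach the same list.
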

	
	\begin{proof}[Proof:\nopunct]
		If $\varphi(t_0)$ is conjugate to an integer matrix then $1+2\cos(t_0)=\tr(\varphi(t_0))=k$ for some $k\in\Z$. Therefore $2\cos(t_0)\in\Z$, which implies that $t_0$ is as in the list above.
		
		Conversely, if $t_0$ belongs to the list above, it follows from Proposition \ref{conjentera3} that the matrix $\matriz{\cos(t_0)&\sin(t_0)\\-\sin(t_0)&\cos(t_0)}$ is conjugate to an integer matrix through a matrix $P\in\GL(2,\R)$. We choose $\tilde{P}=\matriz{1&0\\0&P}$, so $\tilde{P}^{-1}\varphi(t_0)\tilde{P}$ is integer.
		\end{proof}

\begin{coro}\label{possible4}
	The possible holonomy groups of four-dimensional flat solvmanifolds are $\{e\}, \Z_2, \Z_3, \Z_4$ and $\Z_6$.
\end{coro}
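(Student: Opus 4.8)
The plan is to mirror the structure of Corollary \ref{hol3} for dimension 3, since the setup in dimension 4 is entirely analogous. By Lemma \ref{conjentera4}, every four-dimensional non-abelian flat solvmanifold arises as a quotient of the single almost abelian Lie group $G=\R\ltimes_\varphi\R^3$ of \eqref{ecuación}, and a lattice exists precisely when $t_0$ lies in the explicit finite list modulo $2\pi\Z$. Thus the possible holonomy groups are obtained simply by applying Theorem \ref{holonomia_lcm} to each admissible value of $t_0$.

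First I would observe that the block structure $\varphi(t_0)=\matriz{1&0\\0&\tilde\varphi(t_0)}$ means the trivial $\Id_{1\times 1}$ block contributes nothing to the holonomy, so by Theorem \ref{holonomia_lcm} the holonomy group is $\Z_d$ with $d=\ord(\theta(t_0))$, where $\theta(t_0)=\matriz{\cos(t_0)&\sin(t_0)\\-\sin(t_0)&\cos(t_0)}$. Then I would run through the list of admissible $t_0$ from Lemma \ref{conjentera4} and record the order of the corresponding rotation block: the values $\pi,2\pi$ give orders $2$ and $1$; the values $\frac{\pi}{2},\frac{3\pi}{2}$ give order $4$; the values $\frac{2\pi}{3},\frac{4\pi}{3}$ give order $3$; and the values $\frac{\pi}{3},\frac{5\pi}{3}$ give order $6$. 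Collecting the resulting cyclic groups yields exactly $\{e\},\Z_2,\Z_3,\Z_4,\Z_6$.

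Since there is no genuine obstacle here — the work was already done in Lemma \ref{conjentera4} and Theorem \ref{holonomia_lcm} — the only point requiring a word of care is the reduction at the start: one must note that in dimension 4 the flat Lie algebra decomposition forces $\dim\z(\g)=1$, $\dim\b=1$, $\dim[\g,\g]=2$, so every such $G$ is of the form \eqref{ecuación} and the extra central direction is harmless for the holonomy computation. I would therefore write the proof as follows.

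\begin{proof}[Proof:\nopunct]
	All the four-dimensional non-abelian flat solvmanifolds are obtained as quotients of the flat almost abelian Lie group $G=\R\ltimes_\varphi\R^3$ of \eqref{ecuación}. By Lemma \ref{conjentera4}, a lattice exists exactly when $t_0\in\big\{\frac{\pi}{2},\frac{3\pi}{2},\frac{2\pi}{3},\frac{4\pi}{3},\frac{\pi}{3},\frac{5\pi}{3},\pi,2\pi\big\}+2\pi\Z$. Since $\varphi(t_0)=\matriz{1&0\\0&\tilde\varphi(t_0)}$ with $\tilde\varphi(t_0)=\theta(t_0)$, Theorem \ref{holonomia_lcm} gives $\hol(\varGamma\backslash G)\cong\Z_d$ with $d=\ord(\theta(t_0))$. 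Computing the order of $\theta(t_0)$ for each admissible value of $t_0$ we obtain $d=1$ for $t_0=2\pi$, $d=2$ for $t_0=\pi$, $d=4$ for $t_0\in\{\frac{\pi}{2},\frac{3\pi}{2}\}$, $d=3$ for $t_0\in\{\frac{2\pi}{3},\frac{4\pi}{3}\}$, and $d=6$ for $t_0\in\{\frac{\pi}{3},\frac{5\pi}{3}\}$. Hence the possible holonomy groups are $\{e\},\Z_2,\Z_3,\Z_4$ and $\Z_6$.
\end{proof}
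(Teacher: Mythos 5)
Your proof is correct and follows exactly the route the paper intends: the dimension count forces the unique non-abelian flat Lie algebra of \eqref{ecuación}, Lemma \ref{conjentera4} gives the admissible values of $t_0$, and Theorem \ref{holonomia_lcm} converts each into $\Z_{\ord(\theta(t_0))}$, yielding precisely $\{e\},\Z_2,\Z_3,\Z_4,\Z_6$. The paper leaves this corollary essentially proofless (it mirrors Corollary \ref{hol3}), so your write-up just makes the same argument explicit.
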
		

\medskip

Using Proposition \ref{bock} we can exhibit flat Lie groups with lattices such that the holonomy groups of the associated solvmanifolds are all the groups in the Corollary \ref{possible4}.

\subsection{Dimension 5}
Let $\g=\z(\g)\oplus\b\oplus[\g,\g]$ be a non abelian flat Lie algebra of dimension 5, then there are two possibilities: $\dim[\g,\g]=2$ or $\dim[\g,\g]=4$. 

\medskip

$\bullet$ If $\dim[\g,\g]=2$ then $\dim\b=1$ and $\dim\z(\g)=2$ so that we can write\footnote{We will denote by $A=A_1\oplus A_2$ the block diagonal matrix in $\R^4$ given by $A=\matriz{A_1&\\&A_2}$.} \[\g=\R x\ltimes_{\ad_x} \R^4,\quad\text{where}\; [\ad_x]_{\mathcal{B}}=\matriz{0&0\\0&0}\oplus\matriz{0&1\\-1&0},\] for some basis $\mathcal{B}$ of $\R^4$. Therefore the corresponding simply connected Lie group is  \[G=\R\ltimes_{\tilde{\varphi}}\R^4,\quad\text{where}\;\tilde{\varphi}(t)=\matriz{1&0\\0&1}\oplus\matriz{\cos(t)&\sin(t)\\-\sin(t)&\cos(t)}\]  

It can be deduced, analogously to Proposition \ref{conjentera4}, that the only values of $t_0$ for which $\R\ltimes_{\tilde{\varphi}} \R^4$ admits lattices are $2\pi,\pi,\frac{2\pi}{3},\frac{\pi}{2}$ and $\frac{\pi}{3}$. Therefore the holonomy groups in this case will be $\{e\}, \Z_2, \Z_3, \Z_4$ and $\Z_6$. 

\medskip

$\bullet$ Let $\dim[\g,\g]=4$, so that $\dim\b=1$ and $\z(\g)=0$, hence we can write \[\g=\R x\ltimes_{\ad_x}\R^4,\quad\text{where}\;[\ad_x]_{\mathcal{B}}=\matriz{0&a\\-a&0}\oplus\matriz{0&b\\-b&0}\quad\text{with}\;a\geq b>0\] for some basis $\mathcal{B}$ of $\R^4$. We will denote this Lie algebra by $\g_{a,b}$. 
The corresponding simply connected Lie group is
\begin{equation}\label{eq16}
G_{a,b}=\R\ltimes_{\varphi_{a,b}}\R^4,\;\text{with}\;\varphi_{a,b}(t)=\matriz{\cos(at)&\sin(at)\\-\sin(at)&\cos(at)}\oplus\matriz{\cos(bt)&\sin(bt)\\-\sin(bt)&\cos(bt)}.
\end{equation}

\medskip

\begin{lema}\label{conjentera5}
	For $t_0\neq0$, $\varphi_{a,b}(t_0)$ is conjugate to an integer matrix if and only if one of the following cases occurs\footnote{The values for $at_0$ and $bt_0$ are interchangeable.}:
	\medskip
	
	\begin{center}
		\begin{tabular}{|c|c|c|}
			\hline
			& Values for $at_0$ & Values for $bt_0$\\\hline
			\rule{0pt}{3ex} {Case $(1)$}&$2\pi\Z$& $\{\frac{\pi}{2},\frac{3\pi}{2},\frac{2\pi}{3},\frac{4\pi}{3},\frac{\pi}{3},\frac{5\pi}{3},\pi,2\pi\}+2\pi\Z$\\[1ex] \hline
		\end{tabular}
		
		\smallskip
		
		\begin{tabular}{|c|c|c|}\hline
			& Values for $at_0$ & Values for $bt_0$\\\hline
			\rule{0pt}{3ex} {Case $(2)$}& $\{\frac{\pi}{2},\frac{3\pi}{2},\frac{2\pi}{3},\frac{4\pi}{3}, \frac{\pi}{3},\frac{5\pi}{3}, \pi\}+2\pi\Z$ & $\{t_0,-t_0\}+2\pi\Z$ \\ [1ex] \hline
		\end{tabular}
		
		\smallskip
		
		\begin{tabular}{|c|c|c|}\hline
			& Values for $at_0$ & Values for $bt_0$\\\hline
			\rule{0pt}{3ex}\multirow{3}{*}{Case $(3)$} &$\{\frac{\pi}{2},\frac{3\pi}{2},\frac{2\pi}{3},\frac{4\pi}{3},\frac{\pi}{3},\frac{5\pi}{3}\}+2\pi\Z$ &  $\pi+2\pi\Z$  \\\rule{0pt}{3ex}
			& $ \{\frac{\pi}{2},\frac{3\pi}{2}\}+2\pi\Z$ & $\{\frac{2\pi}{3},\frac{4\pi}{3}\}+2\pi\Z$ \\\rule{0pt}{3ex}
			& $\{\frac{\pi}{3},\frac{5\pi}{3}\}+2\pi\Z$& $ \{\frac{\pi}{2},\frac{3\pi}{2},\frac{2\pi}{3},\frac{4\pi}{3}\}+2\pi\Z$\\[1ex]\hline
		\end{tabular}
		
		\smallskip
		
		\begin{tabular}{|c|c|c|}\hline
			& Values for $at_0$ & Values for $bt_0$\\\hline
			\rule{0pt}{3ex} \multirow{4}{*}{Case $(4)$}
			&$\{\frac{\pi}{4},\frac{7\pi}{4}\}+2\pi\Z$&$\{\frac{3\pi}{4},\frac{5\pi}{4}\}+2\pi\Z$\\\rule{0pt}{3ex}
			&$\{\frac{\pi}{5},\frac{9\pi}{5}\}+2\pi\Z$&$\{\frac{3\pi}{5},\frac{7\pi}{5}\}+2\pi\Z$\\\rule{0pt}{3ex}
			&$\{\frac{2\pi}{5},\frac{8\pi}{5}\}+2\pi\Z$&$\{\frac{4\pi}{5},\frac{6\pi}{5}\}+2\pi\Z$\\\rule{0pt}{3ex}
			&$\{\frac{\pi}{6},\frac{11\pi}{6}\}+2\pi\Z$&$\{\frac{5\pi}{6},\frac{7\pi}{6}\}+2\pi\Z$\\ [1ex]
			\hline
		\end{tabular}	
	\end{center}
\end{lema}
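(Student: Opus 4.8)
The plan is to pass from the matrix condition to a condition on the characteristic polynomial, and then to classify the possible polynomials using cyclotomic theory. Write $\alpha=at_0$ and $\beta=bt_0$, so that the eigenvalues of $\varphi_{a,b}(t_0)$ are $e^{\pm i\alpha}$ and $e^{\pm i\beta}$ and its characteristic polynomial factors as $p(\lambda)=q_\alpha(\lambda)\,q_\beta(\lambda)$, where $q_\theta(\lambda)=\lambda^2-2\cos(\theta)\lambda+1$. If $\varphi_{a,b}(t_0)$ is conjugate to an integer matrix then $p$ has integer coefficients, since the characteristic polynomial is a conjugacy invariant. The classification will therefore proceed by determining all unordered pairs $(\alpha,\beta)$ for which $p\in\Z[\lambda]$, and then verifying sufficiency by exhibiting an explicit conjugating matrix in each case; this avoids any general discussion of rational canonical forms.

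The key dichotomy comes from Gauss's Lemma applied to the monic quadratic $q_\alpha$. \emph{Scenario} $(A)$: if $q_\alpha\in\Q[\lambda]$, then, being monic and dividing the monic integer polynomial $p$, it already lies in $\Z[\lambda]$; hence $2\cos\alpha\in\Z$, and symmetrically $2\cos\beta\in\Z$. Since $|\cos\theta|\le 1$ this forces $\cos\alpha,\cos\beta\in\{0,\pm\tfrac12,\pm1\}$, so both $\alpha$ and $\beta$ lie in the list of Proposition \ref{conjentera3}. In this scenario each rotation block is individually conjugate to an integer matrix, namely to $\pm\Id$ or to the companion matrix of the cyclotomic quadratic $\lambda^2-2\cos(\theta)\lambda+1\in\{\lambda^2+1,\lambda^2\pm\lambda+1\}$; taking the direct sum of these conjugating matrices conjugates $\varphi_{a,b}(t_0)$ to an integer matrix. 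I would then sort the admissible pairs $(\alpha,\beta)$ according to whether one block is trivial (Case $(1)$), the two blocks coincide up to sign, i.e. $\beta=\pm\alpha$ (Case $(2)$), or the two blocks are genuinely different (Case $(3)$), reading off the explicit angle values from the five possible values of $\cos\theta$.

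\emph{Scenario} $(B)$: if $q_\alpha\notin\Q[\lambda]$, then the minimal polynomial $\mu$ over $\Q$ of the algebraic integer $e^{i\alpha}$ divides $p$ and has degree strictly greater than $2$, hence degree exactly $4$; thus $\mu=p$ and $p$ is irreducible over $\Q$. All roots of $p$ lie on the unit circle, so Kronecker's Theorem shows they are roots of unity, whence $p$ is an irreducible cyclotomic polynomial of degree $4$, that is $p=\Phi_n$ with $\varphi(n)=4$, giving exactly $n\in\{5,8,10,12\}$. Expressing the primitive $n$-th roots of unity as the two conjugate pairs $\{e^{\pm i\alpha}\}$ and $\{e^{\pm i\beta}\}$ produces precisely the four rows of Case $(4)$. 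Conversely, for each such $n$ the four eigenvalues are distinct and $p=\Phi_n$ is integral, so by the sufficient condition recorded in the remark following Theorem \ref{conjenteraordfinito} the matrix $\varphi_{a,b}(t_0)$ is conjugate over $\R$ to the integer companion matrix of $\Phi_n$. The footnote convention that the roles of $at_0$ and $bt_0$ are interchangeable, together with the normalization $a\ge b>0$, removes the ordering ambiguity.

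I expect the main obstacle to be the combinatorial bookkeeping in Scenario $(A)$: proving that Cases $(1)$--$(3)$ exhaust, without redundancy, all unordered pairs $(\alpha,\beta)$ with $\cos\alpha,\cos\beta\in\{0,\pm\tfrac12,\pm1\}$, and matching each admissible pair of cyclotomic quadratics to exactly one row of the table. By contrast, the conceptual core of the argument is the Gauss's Lemma dichotomy, which cleanly separates the block-diagonal integral cases from the single irreducible quartic cyclotomic polynomial responsible for Case $(4)$.
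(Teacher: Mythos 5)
Your proposal is correct, but it reaches the classification by a genuinely different route than the paper. The paper works directly with the coefficients of the quartic characteristic polynomial: setting $x=\cos(at_0)$, $y=\cos(bt_0)$, integrality forces $2(x+y)=m$ and $4xy=n$ for integers $m,n$; the bounds $|x|,|y|\le 1$ then confine $(m,n)$ to a finite list which is enumerated by hand, and the resulting values of $(\cos(at_0),\cos(bt_0))$ are sorted into the four tables. You instead factor $p=q_\alpha q_\beta$ into the two real quadratics and split on whether $q_\alpha$ is rational: Gauss's Lemma puts both quadratics in $\Z[\lambda]$ in the rational case (reducing Cases $(1)$--$(3)$ to the two-dimensional Proposition \ref{conjentera3}), while in the irrational case irreducibility plus Kronecker identifies $p$ with one of the four cyclotomic polynomials $\Phi_n$, $\varphi(n)=4$, $n\in\{5,8,10,12\}$, which is exactly Case $(4)$. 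Your argument is more conceptual --- it explains \emph{why} Case $(4)$ is precisely the degree-$4$ cyclotomic list and avoids the error-prone $(m,n)$ bookkeeping --- and it generalizes more readily to more rotation blocks; the paper's computation is more elementary and self-contained. The converse direction (block-diagonal conjugation for Cases $(1)$--$(3)$, companion matrix for Case $(4)$) is identical in both. One small step you should make explicit: the jump from $\deg\mu>2$ to $\deg\mu=4$ silently excludes degree $3$; this is easiest to close by first applying Kronecker to conclude $\mu$ is cyclotomic, hence of even degree $\varphi(n)$, or by noting that a monic cubic factor of $p$ would leave a rational linear factor $\lambda\mp 1$ whose square must divide $p$, contradicting irreducibility of the cubic.
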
	
\begin{proof}[Proof:\nopunct]
	If $\varphi_{a,b}(t_0)$ is conjugate to an integer matrix then its characteristic polynomial, $P(\lambda)$, must have integer coefficients. Let $x=\cos(at_0)$ and $y=\cos(bt_0)$. We have \[P(\lambda)=\lambda^4-2(x+y)\lambda^3+(2+4xy)\lambda^2-2(x+y)\lambda+1,\] hence we have the system 
	\[\begin{cases}
	2(x+y)=m\\
	4xy=n
	\end{cases},\] for some integers $m$ and $n$. Solving it, we get
\[\cos(at_0)=x=\frac{m\pm\sqrt{m^2-4n}}{4}\quad\text{and}\;\cos(bt_0)=y=\frac{m\mp\sqrt{m^2-4n}}{4}.\] 

	Suppose first that $\cos(at_0)=\frac{m+\sqrt{m^2-4n}}{4}$, so that $\cos(bt_0)=\frac{m-\sqrt{m^2-4n}}{4}$. From the fact that  $m=2(\cos(at_0)+\cos(bt_0)))$, it follows that $|m|\leq 4$. On the other hand given that $|\cos(at_0)|\leq 1$ and $|\cos(bt_0)|\leq 1$ we have $|m+\sqrt{m^2-4n}|\leq 4$ and $|m-\sqrt{m^2-4n}|\leq 4$. Then  \[-4-m\leq\sqrt{m^2-4n}\leq4-m\quad\text{and}\; 4+m\geq\sqrt{m^2-4n}\geq m-4.\] 
	As $|m|\leq 4$, these conditions translate into \[0\leq \sqrt{m^2-4n}\leq4-m\quad\text{and}\quad 0\leq\sqrt{m^2-4n}\leq 4+m.\]
	
	Giving all the different values to $m$ we obtain the corresponding values for $n$ and finally the possible choices for $(m,n)$ are 
	\[(0,n)\;\text{for}\, -4\leq n\leq 0,\;(\pm1,-2),(\pm1,-1),(\pm1,0),(\pm2,0),(\pm2,1),(\pm3,2),(\pm4,4).\] The corresponding values for $\cos(at_0)$ and $\cos(bt_0)$ are
	\begin{center}
		\begin{tabular}{|c|c|c|c|c|c|}
			\hline
			\multicolumn{6}{|c|}{Case $(1)$}\\
			\hline
			&$(2,0)$&$(1,-2)$&$(3,2)$&$(0,-4)$&$(4,4)$\\ \hline\rule{0pt}{2ex}
			
			$\cos(at_0)$&$1$&$1$&$1$&$1$&$1$\\
			
			$\cos(bt_0)$&$0$&$-\frac{1}{2}$&$\frac{1}{2}$&$-1$&$1$\\[0.5ex]\hline
		\end{tabular}
		
		\smallskip
		
		\begin{tabular}{|c|c|c|c|c|}
			\hline
			\multicolumn{5}{|c|}{Case $(2)$}\\
			\hline
			&$(0,0)$&$(-2,1)$&$(2,1)$&$(-4,4)$\\\hline \rule{0pt}{2.5ex}
			
			$\cos(at_0)$&$0$&$-\frac{1}{2}$&$\frac{1}{2}$&$-1$\\[0.5ex]
			
			$\cos(bt_0)$&$0$&$-\frac{1}{2}$&$\frac{1}{2}$&$-1$\\[0.5ex]\hline
		\end{tabular}
		
		\smallskip
		
		\begin{tabular}{|c|c|c|c|c|c|c|}
			\hline
			\multicolumn{7}{|c|}{Case $(3)$}\\
			\hline
			&$(-2,0)$ &$(-3,2)$&$(-1,-2)$&$(-1,0)$&$(1,0)$&$(0,-1)$\\\hline \rule{0pt}{2.5ex}
			$\cos(at_0)$&$0$&$-\frac{1}{2}$&$\frac{1}{2}$&$0$&$\frac{1}{2}$&$\frac{1}{2}$\\[0.5ex]
			$\cos(bt_0)$&$-1$&$-1$&$-1$&$-\frac{1}{2}$&$0$&$-\frac{1}{2}$\\[0.5ex]\hline
		\end{tabular}
		
		\smallskip
		\begin{tabular}{|c|c|c|c|c|}
			\hline
			\multicolumn{5}{|c|}{Case $(4)$}\\
			\hline
			&$(0,-2)$&$(1,-1)$&$(-1,-1)$&$(0,-3)$\\ \hline \rule{0pt}{2.5ex}
			$\cos(at_0)$&$\frac{\sqrt{2}}{2}$&$\frac{1+\sqrt{5}}{4}$&$\frac{-1+\sqrt{5}}{4}$&$\frac{\sqrt{3}}{2}$\\[0.5ex]
			$\cos(bt_0)$&$-\frac{\sqrt{2}}{2}$&$\frac{1-\sqrt{5}}{4}$&$-\frac{1+\sqrt{5}}{4}$&$-\frac{\sqrt{3}}{2}$\\[0.5ex]\hline
		\end{tabular}
	\end{center}
	If we suppose that $\cos(at_0)=\frac{m-\sqrt{m^2-4n}}{4}$ and $\cos(bt_0)=\frac{m+\sqrt{m^2-4n}}{4}$ then we will only exchange the values for $at_0$ and $bt_0$ in the tables of the cases $(1),  (2), (3)$ and $(4)$, respectively. We have therefore obtained the list in the statement.
	
	Conversely, for the cases $(1), (2)$ and $(3)$, we use Proposition \ref{conjentera3} and we conjugate the matrix $\varphi_{a,b}(t_0)$ to an integer matrix through a block matrix. For the case $(4)$, as all the eigenvalues of $\varphi_{a,b}(t_0)$ are different, we may conjugate $\varphi_{a,b}(t_0)$ to the companion matrix of its characteristic polynomial which has integer coefficients as can be seen.
\end{proof}

\medskip

\begin{teo}\label{hol5}
	The list of all the possible holonomy groups of flat solvmanifolds of dimension 5 is the following: 
	\[\{e\}, \Z_2,\Z_3,\Z_4,\Z_5,\Z_6,\Z_8,\Z_{10},\Z_{12}.\] 
\end{teo}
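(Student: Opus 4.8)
The plan is to exploit the fact, established in the preceding discussion, that in dimension $5$ every non-abelian flat Lie algebra has $\dim\b=1$ and is therefore almost abelian. Consequently Theorem \ref{holonomia_lcm} applies to any associated solvmanifold $\varGamma\backslash G$, forcing its holonomy group to be cyclic of the form $\Z_d$ with $d=\lcm(\ord(\theta(a_1t_0)),\ldots,\ord(\theta(a_nt_0)))$. The whole problem thus reduces to determining, over all admissible parameter choices, exactly which values $d$ can attain. I would organize the argument according to the two possibilities $\dim[\g,\g]=2$ and $\dim[\g,\g]=4$ already isolated above.

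In the case $\dim[\g,\g]=2$ the matrix $\tilde\varphi(t_0)$ has a single rotation block, so $d$ is simply the order of that block. The admissible values of $t_0$ coincide with those found in dimensions $3$ and $4$, and reading off $\ord(\theta(t_0))$ for $t_0\in\{\tfrac{\pi}{2},\tfrac{2\pi}{3},\tfrac{\pi}{3},\pi,2\pi\}+2\pi\Z$ immediately yields $d\in\{1,2,3,4,6\}$, i.e.\ the groups $\{e\},\Z_2,\Z_3,\Z_4,\Z_6$. This part is essentially free.

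The substance of the proof lies in the case $\dim[\g,\g]=4$, where I would invoke Lemma \ref{conjentera5}: for every pair $(at_0,bt_0)$ appearing in its four tables I compute $\ord(\theta(at_0))$ and $\ord(\theta(bt_0))$ (using that $\theta(t)$ has order $q$ precisely when $t=\tfrac{2\pi p}{q}$ in lowest terms) and take the least common multiple. Working through the tables gives: Case $(1)$ reproduces $\{1,2,3,4,6\}$; Case $(2)$ gives $\{2,3,4,6\}$; Case $(3)$ produces $\{4,6,12\}$, the new value $12$ coming from $\lcm(4,3)$ and $\lcm(6,4)$; and Case $(4)$ yields $\{5,8,10,12\}$, where the orders $8,10,5,12$ arise respectively from the angles $\tfrac{\pi}{4},\tfrac{\pi}{5},\tfrac{2\pi}{5},\tfrac{\pi}{6}$.

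Finally I would take the union of all values obtained, which is exactly $\{1,2,3,4,5,6,8,10,12\}$, giving the asserted list $\{e\},\Z_2,\Z_3,\Z_4,\Z_5,\Z_6,\Z_8,\Z_{10},\Z_{12}$. The only genuine obstacle is bookkeeping, namely verifying both inclusions: that each listed group actually occurs (guaranteed because the tabulated parameters do yield conjugacy to invertible integer matrices, hence lattices via Proposition \ref{bock}) and that no further value of $d$ can appear (ensured precisely because Lemma \ref{conjentera5} enumerates \emph{all} admissible $(at_0,bt_0)$). One must take care not to skip a table row or miscompute a rotation order, but no conceptual difficulty arises beyond this.
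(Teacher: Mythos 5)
Your proposal is correct and follows exactly the paper's route: reduce to the almost abelian case so Theorem \ref{holonomia_lcm} applies, then read off the orders $\lcm(\ord(\theta(at_0)),\ord(\theta(bt_0)))$ from the exhaustive tables of Lemma \ref{conjentera5} (together with the single-block case $\dim[\g,\g]=2$). The computed values in each case, and hence the final list $\{1,2,3,4,5,6,8,10,12\}$, all check out.
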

\begin{proof}[Proof:\nopunct]
	The result follows by analyzing all the cases obtained in Lemma \ref{conjentera5} and using Theorem \ref{holonomia_lcm}. 
\end{proof}

Again, using Proposition \ref{bock} we can exhibit flat Lie groups with lattices such that the holonomy groups of the associated solvmanifolds are all the groups in Theorem \ref{hol5}.

\medskip 

\subsection{Dimension 6}
Let $\g=\z(\g)\oplus\b\oplus[\g,\g]$ a non abelian flat Lie algebra of dimension 6. There are two possibilities for $\dim[\g,\g]$, namely $\dim[\g,\g]=2$ or $\dim[\g,\g]=4$. 

\medskip

If $\dim[\g,\g]=2$, then $\dim\b=1$ (recall that in general $\dim\b\leq\frac{\dim[\g,\g]}{2}$) and thus $\dim\z(\g)=3$. Therefore the simply connected Lie group corresponding to $\g=\R x\ltimes_\varphi\R^5$ can be written as $G=\R\ltimes_{\tilde{\varphi}} \R^5$, with $\tilde{\varphi}(t)=\Id_3\oplus\matriz{\cos(t)&\sin(t)\\-\sin(t)&\cos(t)}$. It can be easily seen that $\tilde{\varphi}(t_0)$ is conjugate to an integer matrix if and only if
$t_0\in\{\frac{\pi}{2}, \frac{3\pi}{2}, \frac{2\pi}{3}, \frac{4\pi}{3}, \frac{\pi}{3}, \frac{5\pi}{3}, \pi, 2\pi\}+2\pi\Z$. In this case, all the possible holonomy groups are $\{e\},\Z_2,\Z_3,\Z_4,\Z_6$. 

\bigskip

If $\dim[\g,\g]=4$, then $\dim\b$ can be 1 or 2. 

Let $\dim\b=1$, so that $\dim\z(\g)=1$. The corresponding simply connected Lie group is $G=\R\ltimes_{\varphi}\R^5$, where $\varphi(t)=\matriz{1&&\\&\theta(at)&\\&&\theta(bt)}$ with $a\geq b>0$. As in the proof of Lemma \ref{conjentera5}, it can be seen that $\varphi(t_0)$ is conjugate to an integer matrix if and only if $at_0$ and $bt_0$ are as in Lemma \ref{conjentera5}. Therefore, the possible holonomy groups are $\{e\}, \Z_2, \Z_3, \Z_4, \Z_5, \Z_6, \Z_8, \Z_{10}$ and $\Z_{12}$.

\medskip

Let $\dim\b=2$, so that $\z(\g)=0$. Note that in this case $\g$ is not almost abelian, hence we cannot use Proposition \ref{bock} to determine the existence of lattices and the holonomy groups of the corresponding solvmanifolds. However, we will be able to exhibit examples of flat solvmanifolds with non cyclic holonomy group. These examples will be deduced from the following general construction.

\begin{prop}\label{decomposition}
	Let $\g=\z(\g)\oplus\b\oplus[\g,\g]$ be a flat Lie algebra with $\dim\b=\frac{\dim[\g,\g]}{2}$. Then $\g$ is isomorphic as a Lie algebra to $\z(\g)\times \e(2)^{\dim\b}$. 
\end{prop}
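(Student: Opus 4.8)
The plan is to use the hypothesis $\dim\b=\tfrac12\dim[\g,\g]$ to upgrade the conclusion of Theorem \ref{alglieplanas}: there $\ad(\b)$ is only known to be \emph{contained in} a maximal abelian subalgebra of $\so[\g,\g]$, but under the equality of dimensions it must \emph{coincide} with one. Once this is established, an adapted basis exhibits $\g$ as a product of copies of $\e(2)$ and a central factor.

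First I would set $m=\dim\b=\tfrac12\dim[\g,\g]$, so $\so[\g,\g]\cong\so(2m)$. By Theorem \ref{alglieplanas} the map $\ad\colon\b\to\so[\g,\g]$ is injective, hence $\ad(\b)$ is an abelian subalgebra of dimension $m$. As recalled after Theorem \ref{alglieplanas}, $\ad(\b)$ is contained in some maximal abelian subalgebra $\mathfrak{t}$, and every such $\mathfrak{t}$ is conjugate to the standard one of block-diagonal $2\times2$ skew-symmetric blocks, which has dimension $m$. Therefore $\dim\ad(\b)=m=\dim\mathfrak{t}$ forces $\ad(\b)=\mathfrak{t}$, and after conjugating we may take an orthonormal basis $\{e_1,f_1,\ldots,e_m,f_m\}$ of $[\g,\g]$ in which $\ad(\b)$ is exactly the space of matrices $\bigoplus_{j=1}^m\matriz{0&a_j\\-a_j&0}$ with $a_j\in\R$.

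Next I would transport the standard generators back to $\b$: since $\ad\colon\b\to\ad(\b)$ is a linear isomorphism, for each $j$ there is a unique $x_j\in\b$ with $\ad_{x_j}$ equal to the matrix having $\matriz{0&1\\-1&0}$ in the $j$-th block and zeros elsewhere. Then $\{x_1,\ldots,x_m\}$ is a basis of $\b$ and the brackets read
\[ [x_j,e_j]=-f_j,\qquad [x_j,f_j]=e_j,\qquad [x_j,e_i]=[x_j,f_i]=0\ \ (i\neq j). \]
Combined with the facts that $\b$ and $[\g,\g]$ are abelian and that $\z(\g)$ is central, these relations determine all brackets. Setting $\e(2)_j=\operatorname{span}\{x_j,e_j,f_j\}$, the displayed relations show that each $\e(2)_j$ is an ideal isomorphic to $\e(2)$ and that $\z(\g),\e(2)_1,\ldots,\e(2)_m$ pairwise commute; hence $\g=\z(\g)\oplus\e(2)_1\oplus\cdots\oplus\e(2)_m$ is a direct sum of commuting ideals, giving $\g\cong\z(\g)\times\e(2)^{m}=\z(\g)\times\e(2)^{\dim\b}$.

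The only genuine obstacle is the dimension-counting step that forces $\ad(\b)$ to be all of $\mathfrak{t}$ rather than a proper subspace; everything after that is bookkeeping with the adapted basis, and the sign conventions are arranged precisely so that each three-dimensional block reproduces the bracket relations of $\e(2)$.
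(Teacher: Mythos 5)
Your proposal is correct and follows essentially the same route as the paper: the paper phrases the key step as ``the functionals $\alpha_1,\dots,\alpha_n\in\b^*$ giving the block entries of $\ad_x$ have trivial common kernel, hence form a basis of $\b^*$,'' which is exactly your dimension count showing $\ad(\b)$ fills the whole standard maximal abelian subalgebra, and both arguments then pass to the dual basis (your $x_j$, the paper's $e_i$) to split $\g$ into $\z(\g)$ and three-dimensional ideals isomorphic to $\e(2)$. The conjugacy fact you invoke is the one the paper itself records after Theorem \ref{alglieplanas}, so nothing extra is needed.
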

\begin{proof}[Proof:\nopunct]
	Let $\dim[\g,\g]=2n$. It follows from Theorem \ref{alglieplanas} that there exists an orthonormal basis $\mathcal{B}=\{f_1,\,\ldots\,,f_{2n}\}$ of $[\g,\g]$ and $\alpha_1,\ldots,\alpha_n\in\b^*$ such that for all $x\in\b$,
	\[
	[\ad_x]_{\mathcal{B}}=\matriz{
		& \alpha_1(x)&&&\\
		-\alpha_1(x)&&&\\
		&&\ddots&&\\
		&&&& \alpha_n(x)\\
		&&&-\alpha_n(x)}
	.\]
	Since $\ad:\b\to\so[\g,\g]$ is injective, we have $\cap_{i=1}^n \ker\alpha_i=\{0\}$. It follows that $\{\alpha_i\}_{i=1}^n$ is a basis of $\b^*$. Now let $\{e_i\}_{i=1}^n$ be the dual basis of $\{\alpha_i\}_{i=1}^n.$ Then \[
	[\ad_{e_i}]_{\mathcal{B}}=\matriz{
		0&&&&&\\
		&\ddots&&&&\\
		&&0&1&&\\
		&&-1&0&&\\
		&&&&\ddots&\\
		&&&&&0}
	,\]
	i.e. the only brackets different from zero are $[e_i,f_{2i-1}]=-f_{2i}$ and $[e_i,f_{2i}]=f_{2i-1}$, for $1\leq i\leq n$. Let $\g_i=\text{span}\{e_i,f_{2i-1},f_{2i}\}$, which is an ideal of $\g$ isomorphic to $\e(2)$ for all $i$. Therefore $\g$ splits as a direct product of Lie algebras, \[\g=\z(\g)\times\g_1\times\cdots\times\g_n\simeq \z(\g)\times\e(2)^n.\] 
\end{proof}

\begin{obs}
	The basis $\{e_i\}_{i=1}^n$ of $\b$ appearing in the proof is not necessarily orthonormal and therefore the decomposition of Theorem \ref{decomposition} is not necessarily orthogonal.
\end{obs}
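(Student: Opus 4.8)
Since the assertion is a genericity (negative) statement, the cleanest route is to exhibit a single flat Lie algebra for which the decomposition produced in Proposition~\ref{decomposition} fails to be orthogonal. First I would reduce the orthogonality of $\g=\z(\g)\times\g_1\times\cdots\times\g_n$ to a single inner-product condition. The factors are $\g_i=\operatorname{span}\{e_i,f_{2i-1},f_{2i}\}$ together with $\z(\g)$; by Theorem~\ref{alglieplanas} the summands $\z(\g)$, $\b$ and $[\g,\g]$ are mutually orthogonal and $\{f_j\}$ is orthonormal, so every inner product between vectors of distinct factors vanishes automatically except possibly $\langle e_i,e_j\rangle$ with $i\neq j$. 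Hence the decomposition is orthogonal if and only if $\langle e_i,e_j\rangle=0$ for all $i\neq j$, and it suffices to produce a flat Lie algebra in which some $\langle e_i,e_j\rangle\neq0$. Equivalently, since $\{e_i\}$ is dual to $\{\alpha_i\}$, this amounts to the $\alpha_i$ failing to be orthonormal in $\b^{*}$, a condition on the action $\ad\colon\b\to\so[\g,\g]$ that is in no way forced; this also accounts for the first half of the remark, that $\{e_i\}$ need not be orthonormal.

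I would realize this in the smallest relevant case $n=2$, so $\dim\g=6$, $\z(\g)=0$ and $\dim[\g,\g]=4$. Let $\b=\R^2$ carry an orthonormal basis $\{u_1,u_2\}$ and let $[\g,\g]=\R^4$ carry an orthonormal basis $\{f_1,f_2,f_3,f_4\}$. Define $\ad$ by the functionals $\alpha_1=u_1^{*}$ and $\alpha_2=u_1^{*}+u_2^{*}$, so that $[\ad_x]$ has the two skew-symmetric blocks $\alpha_1(x)\,J$ on $\operatorname{span}\{f_1,f_2\}$ and $\alpha_2(x)\,J$ on $\operatorname{span}\{f_3,f_4\}$, where $J=\left(\begin{smallmatrix}0&1\\-1&0\end{smallmatrix}\right)$. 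Because $\{\alpha_1,\alpha_2\}$ is a basis of $\b^{*}$, the map $\ad$ is injective and each $\ad_x$ is skew-adjoint in the orthonormal basis $\{f_j\}$, so $(\g,\langle\cdot,\cdot\rangle)$ is a flat Lie algebra, isomorphic to $\e(2)\times\e(2)$. Solving $\alpha_i(e_j)=\delta_{ij}$ gives $e_1=u_1-u_2$ and $e_2=u_2$, whence $\langle e_1,e_2\rangle=-1\neq0$ and $\|e_1\|=\sqrt2$. Thus $\{e_1,e_2\}$ is neither orthonormal nor even orthogonal, and by the reduction above $\g_1\not\perp\g_2$, so the decomposition is not orthogonal.

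The step requiring care is not the arithmetic but confirming that this data genuinely defines a flat Lie algebra whose canonical decomposition of Theorem~\ref{alglieplanas} is exactly the assumed $\z(\g)\oplus\b\oplus[\g,\g]$; in particular that $\z(\g)=0$ and that $[\g,\g]=\operatorname{span}\{f_1,\dots,f_4\}$. The first holds because each block $\alpha_i(x)\,J$ annihilates only the zero vector when $\alpha_i(x)\neq0$, and $\alpha_1,\alpha_2$ can be made simultaneously nonzero, so no nonzero vector of $\R^4$ is fixed by all $\ad_x$, while $\ker(\ad|_\b)=\cap_i\ker\alpha_i=\{0\}$; the second holds because $\b$ is abelian and $[\b,[\g,\g]]=[\g,\g]$ by nondegeneracy of the action. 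Once these routine checks are in place, the $\{e_i\}$ of the example are precisely those manufactured by the construction in Proposition~\ref{decomposition}, and the remark is established.
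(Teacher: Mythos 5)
Your proposal is correct, and it does more than the paper does: the paper offers no argument for this remark at all, stating it as a bare observation whose implicit justification is merely that nothing in the proof of Proposition~\ref{decomposition} forces the dual basis $\{e_i\}$ of $\{\alpha_i\}$ to be orthonormal. You instead prove the stronger, existential content of ``not necessarily'': first a clean reduction showing the decomposition is orthogonal if and only if $\langle e_i,e_j\rangle=0$ for $i\neq j$ (correct, since $\z(\g)$, $\b$ and $[\g,\g]$ are mutually orthogonal and $\{f_j\}$ is orthonormal by Theorem~\ref{alglieplanas}), and then an explicit six-dimensional witness with $\alpha_1=u_1^{*}$, $\alpha_2=u_1^{*}+u_2^{*}$ on an orthonormal basis $\{u_1,u_2\}$ of $\b$, giving $e_1=u_1-u_2$, $e_2=u_2$ and $\langle e_1,e_2\rangle=-1$. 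Your attention to the non-routine point is well placed and your checks are sound: the blocks $\alpha_i(x)J$ commute, so $\b\ltimes_{\ad}\R^4$ is a Lie algebra; Milnor's criterion (Theorem~\ref{algebraplana}) gives flatness; and your verifications that $\z(\g)=0$ and $[\g,\g]=\R^4$ identify your assumed splitting with the canonical one of Theorem~\ref{alglieplanas}, so your $\{e_i\}$ really are the vectors produced by the construction. One refinement worth recording: the functionals $\{\alpha_i\}$ in the proof of Proposition~\ref{decomposition} are canonical only up to permutation and signs (from the choice of oriented invariant $2$-planes, which for your example are uniquely determined since $\alpha_1\neq\pm\alpha_2$); this ambiguity changes each $e_i$ at most by a sign, so $\langle e_1,e_2\rangle=\pm1\neq0$ for every admissible choice, and the failure of orthogonality is an intrinsic feature of the Lie algebra rather than an artifact of one basis.
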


\medskip

The associated simply connected Lie group is $G=\R^s\times E(2)^n$, where $s=\dim\z(\g)$ and $n=\dim\b$. Let $\varGamma_1, \varGamma_2,\,\ldots\,,\varGamma_n$ be lattices in $E(2)$. We construct a lattice in $G$ choosing the product of the lattices, $\varGamma=\Z^s\times\varGamma_1\times\cdots\times\varGamma_n$. The corresponding solvmanifold, $\varGamma\backslash G\cong \Z^s\backslash\R^s\times\varGamma_1\backslash E(2)\times\cdots\times\varGamma_n\backslash E(2)$ is equipped with a flat metric $g_0$ induced by the flat left invariant metric on $G$. On the other hand, if we consider on $\varGamma\backslash G$ the product metric $g_*$ where each factor $\varGamma_i\backslash E(2)$ carries a flat metric induced by a flat left invariant metric on $E(2)$, then $(\varGamma\backslash G, g_0)$ and $(\varGamma\backslash G, g_*)$ are two compact flat manifolds with isomorphic fundamental groups. By Theorem \ref{Biebvar2} there exists an affine equivalence between them, which implies $\hol(\varGamma\backslash G,g_0)\cong\hol(\varGamma\backslash G,g_*)$, but the last one is isomorphic to $\hol(T^s)\times\hol(\varGamma_1\backslash E(2))\times \cdots\times \hol(\varGamma_n\backslash E(2)).$

In conclusion \[\hol(\varGamma\backslash G)=\Z_{k_1}\oplus\cdots\oplus\Z_{k_n},\quad\text{with}\; k_j=|\Lambda_j\backslash\varGamma_j|,\] where $\Lambda_j$ is the maximal normal abelian subgroup of $\varGamma_j$. Moreover, it follows from Corollary \ref{hol3} that $k_i\in\{2,3,4,6\}$.

In the particular case of dimension 6, with $\g=\b\oplus [\g,\g]$, $\dim[\g,\g]=4$ and $\dim\b=2$, we get the following holonomy groups: 

\begin{equation}\label{eq*}
\begin{split}
\Z_2\oplus\Z_2,\;\; \Z_2\oplus\Z_3,\;\; \Z_2\oplus\Z_4,\;\; \Z_2\oplus\Z_6,\;\; \Z_3\oplus\Z_3, \\
\Z_3\oplus\Z_4,\;\;\Z_3\oplus\Z_6,\;\;\Z_4\oplus\Z_4,\;\;\Z_4\oplus\Z_6,\;\;\Z_6\oplus\Z_6.
\end{split}
\end{equation}
\begin{obs}\label{dim6}
	As mentioned in Corollary \ref{kähler}, every flat solvmanifold of even dimension admits a Kähler structure. In \cite{De}, all the holonomy groups of six-dimensional Kähler compact flat manifolds are determined. They are: the trivial group, $\Z_n$ for $n=2,3,4,5,6,8,10,12$, the direct sums in \eqref{eq*} and the dihedral group $D_8$. It is stated in \cite{De} that there is only one Kähler compact flat manifold in dimension 6 with this holonomy group and this manifold has $\beta_1=0$. This compact flat manifold is not homeomorphic to any solvmanifold according to Remark \ref{primitivo}. In conclusion, the only possible holonomy groups of flat solvmanifolds in dimension 6 are the ones we have found and there are no more.
\end{obs}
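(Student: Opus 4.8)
The plan is to prove the statement in two directions: first that every group in the list is realized by some six-dimensional flat solvmanifold, and then that no group outside the list can occur, the latter being the substantive part. For the realization direction I would simply assemble the three cases already treated in this subsection. When $\dim[\g,\g]=2$ (so $\dim\b=1$, $\dim\z(\g)=3$) the almost abelian analysis yields $\{e\},\Z_2,\Z_3,\Z_4,\Z_6$; when $\dim[\g,\g]=4$ and $\dim\b=1$ (so $\dim\z(\g)=1$) the computation mirroring Lemma \ref{conjentera5} adds $\Z_5,\Z_8,\Z_{10},\Z_{12}$, giving all the cyclic groups in the list; and when $\dim[\g,\g]=4$ and $\dim\b=2$ the general construction built on Proposition \ref{decomposition} produces, via Bieberbach's Second Theorem (Theorem \ref{Biebvar2}), precisely the ten direct sums displayed in \eqref{eq*}. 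Each of these cases furnishes an explicit Lie group together with a lattice (using Proposition \ref{bock} in the almost abelian cases and a product of lattices in $E(2)$ in the last case), so every group in the claimed list is attained.

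The completeness direction is where the real work lies, since in the case $\dim\b=2$ the Lie algebra is not almost abelian, so Proposition \ref{bock} is unavailable and one cannot enumerate lattices and compute holonomy directly. To obtain an a priori upper bound on the holonomy group I would pass to the Kähler classification. By Corollary \ref{kähler} every even-dimensional flat solvmanifold carries a Kähler structure, so any six-dimensional flat solvmanifold $\varGamma\backslash G$ is in particular a six-dimensional Kähler compact flat manifold. Hence its holonomy group must appear in the classification of \cite{De}, which asserts that the only such holonomy groups are the trivial group, $\Z_n$ for $n\in\{2,3,4,5,6,8,10,12\}$, the direct sums in \eqref{eq*}, and the dihedral group $D_8$.

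It therefore remains to exclude $D_8$, which is the only entry on that list not already realized above. I would do this in either of two ways. The quickest is to invoke Theorem \ref{holabeliana}: the holonomy group of a flat solvmanifold is abelian, while $D_8$ is non-abelian, so $D_8$ cannot occur. Alternatively, staying inside the Kähler framework, \cite{De} records that the unique six-dimensional Kähler compact flat manifold with holonomy $D_8$ has first Betti number $\beta_1=0$ and is thus primitive; but by Remark \ref{primitivo} every flat solvmanifold satisfies $\beta_1\geq 1$ and is never primitive, so this manifold is not even homeomorphic to a solvmanifold. Either argument removes $D_8$, and combining this with the realization direction shows the list is exactly as stated. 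The main obstacle, as noted, is precisely the $\dim\b=2$ case: the whole point of routing the upper bound through the Kähler classification of \cite{De} is that the direct lattice-counting method used in the lower-dimensional sections breaks down when $\g$ fails to be almost abelian, and the abelianness (or non-primitivity) of solvmanifold holonomy is what eliminates the single spurious candidate $D_8$.
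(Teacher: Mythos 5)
Your proposal is correct and follows essentially the same route as the paper: both bound the possibilities via Corollary \ref{kähler} and the classification of six-dimensional K\"ahler compact flat manifolds in \cite{De}, then eliminate the lone extra candidate $D_8$ using the fact that its unique realization has $\beta_1=0$, contradicting Remark \ref{primitivo}. Your observation that Theorem \ref{holabeliana} kills $D_8$ even more directly (it is non-abelian) is a valid small shortcut the paper does not spell out, but it does not change the structure of the argument.
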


\

\begin{thebibliography}{99}\frenchspacing
	\fontsize{9pt}{0.5}
	\bibitem{Aus61}
	{\sc L. Auslander}, Discrete uniform subgroups of solvable Lie groups, \textit{Trans. Am. Math. Soc.} \textbf{99} (1961), 398--402.
	
	\bibitem{Aus73a}
	{\sc L. Auslander}, An exposition of the structure of solvmanifolds. I. Algebraic theory, \textit{Bull. Am. Math. Soc.} \textbf{79} (1973), 227--261.
	
	\bibitem{Aus73b} 
	{\sc L. Auslander}, An exposition of the structure of solvmanifolds. II. G-induced flows, \textit{Bull. Am. Math. Soc.} \textbf{79} (1973), 262--285.
	
	\bibitem{Aus}
	{\sc L. Auslander, M. Auslander}, Solvable Lie groups and locally Euclidean Riemann spaces, \textit{Proc. Amer. Math. Soc.} \textbf{9} (1958), 933--941.
	
	\bibitem{AusK}
	{\sc L. Auslander, M. Kuranishi}, On the holonomy group of locally Euclidean spaces, \textit{Ann. of Math.} \textbf{65} (1957), 411--415.
	
	\bibitem{BDF}
	{\sc M. L. Barberis, I. Dotti, A. Fino}, Hyper-Kähler quotients of solvable Lie groups, \textit{J. Geom. Phys.} \textbf{56} (2006), 691--711.
	
	\bibitem{Bi1}
	{\sc L. Bieberbach}, Über die Bewegungsgruppen des $n$-dimensionalen euklidischen Raumes mit einem endlichen Fundamentalbereich, \textit{Gött. Nachr.}(1910) 75--84.
	
	\bibitem{Bi2} 
	{\sc L. Bieberbach}, Über die Bewegungsgruppen der Euklidischen Räume I, \textit{Math. Ann.} \textbf{70} (1911), 297--336.
	
	\bibitem{Bi3}
	{\sc L. Bieberbach}, Über die Bewegungsgruppen der Euklidischen Räume II, \textit{Math. Ann.} \textbf{72} (1912), 207--216.
	
	\bibitem{B} 
	{\sc C. Bock}, On low-dimensional solvmanifolds, \textit{Asian J. Math.} \textbf{20} (2016), 199--262.
	
	\bibitem{Ch}
	{\sc L. Charlap}, \textit{Bieberbach groups and flat manifolds}, Springer, New York, 1986.
	
	\bibitem{CR}
	{\sc J. H. Conway, J. P. Rossetti}, Describing the platycosms, \texttt{arXiv:math/0311476 [math.DG]}
	
	\bibitem{De}
	{\sc K. Dekimpe, M. Halenda, A. Szczepański}, Kähler flat manifolds, \textit{J. Math. Soc. Japan} \textbf{61} (2009), 363--377.
	
	\bibitem{FMOU}
	{\sc M. Fernández, V. Manero, A. Otal, L. Ugarte}, Symplectic half-flat solvmanifolds, \textit{Ann. Glob. Anal Geom.} \textbf{43} (2013), 367--383.
	
	\bibitem{Gre}
	{\sc G. Greiter}, A simple proof for a theorem of Kronecker, \textit{Amer. Math. Monthly} \textbf{9} (1978), 756--757.
	
	\bibitem{HW}
	{\sc W. Hantzsche, H. Wendt}, Dreidimensionale euklidische Raumformen, \textit{Math. Ann.} \textbf{110} (1935), 593--611.
	
	\bibitem{Hat}
	{\sc A. Hattori}, Spectral sequence in the de Rham cohomology of fibre bundles, \textit{J. Fac. Sci. Univ. Tokyo I} \textbf{8} (1960), 289--331.
	
	\bibitem{Hi}
	{\sc H. Hiller}, Minimal dimension of flat manifolds with abelian holonomy, unpublished.
	
	\bibitem{K}
	{\sc H. Kasuya}, Vaisman metrics on solvmanifolds and Oeljeklaus-Toma manifolds, \textit{Bull. Lond. Math. Soc.} \textbf{45} (2013) 15--26.
	
	\bibitem{Ma}
	{\sc A. Malcev}, On a class of homogeneous spaces, \textit{Izv. Akad. Nauk. Armyan. SSSR Ser. Mat} \textbf{13} (1949), 201--212.
	
	\bibitem{Mi}
	{\sc J. Milnor}, Curvatures of left invariant metrics on Lie
	groups, \textit{Adv. Math.} \textbf{21} (1976), 293--329.
	
	\bibitem{Mor}
	{\sc A. Morgan}, The classification of flat solvmanifolds, \textit{Trans. Amer. Math. Soc.} \textbf{239} (1978), 321--351.
	
	\bibitem{Mo61}
	{\sc G. Mostow}, Cohomology of topological groups and solvmanifolds, \textit{Ann. of Math.} \textbf{73} (1961), 20--48.
	
	\bibitem{No54}
	{\sc K. Nomizu}, On the cohomology of compact homogeneous space of nilpotent Lie groups, \textit{Ann. of Math.} \textbf{59} (1954), 531--538.
	
	\bibitem{OT}
	{\sc K. Oeljeklaus, M. Toma}, Non-Kähler compact complex manifolds associated to number fields, \textit{Ann. Inst. Fourier (Grenoble)} \textbf{55} (2005) 161--171.
	
	\bibitem{OUV}
	{\sc A. Otal, L. Ugarte, R. Villacampa}, Invariant solutions to the Strominger system and the heterotic equations of motion, \textit{Nuclear Phys. B} \textbf{920} (2017) 442--474.
	 
	\bibitem{Rag}
	{\sc M. Raghunathan}, \textit{Discrete subgroups of Lie groups}, Springer, Berlin, 1972.
	
	\bibitem{Sz}
	{\sc A. Szczepański}, \textit{Geometry of crystallographic groups}, World Scientific, Singapore, 2012.
	
	\bibitem{Th}
	{\sc W. Thurston}, Some simple examples of symplectic manifolds, \textit{Proc. Amer. Math. Soc.} \textbf{55} (1976), 467--468.
	
	\bibitem{Va}
	{\sc V. Varadarajan}, \textit{Lie groups, Lie algebras and their representations}, Springer, New York, 1984.
	
	
	\bibitem{Wo}
	{\sc J. Wolf}, \textit{Spaces of constant curvature}, McGraw-Hill, New York, 1967.
\end{thebibliography}
\end{document}